\newcommand{\footremember}[2]{%
	\footnote{#2}
	\newcounter{#1}
	\setcounter{#1}{\value{footnote}}%
}
\author{Illya Koval \footremember{email}{Illya.Koval@ist.ac.at}}
\title{
 Local strong Birkhoff conjecture and local spectral rigidity of almost every ellipse}
\date{%
	Institute of Science and Technology Austria\\ %
	\today
}
\begin{document}
	
	\maketitle
	
	\newtheorem*{oldtheorem}{Theorem}
	\newtheorem{theorem}{Theorem}
	\newtheorem{lemma}{Lemma}
	\newtheorem{remark}{Remark}
	\newtheorem{proposition}{Proposition}
	\newtheorem{definition}{Definition}
	\newtheorem*{conjecture}{Conjecture}
	\newtheorem{example}{Example}

	\numberwithin{equation}{section}
	\numberwithin{proposition}{section}
	\numberwithin{lemma}{section}
	\numberwithin{definition}{section}

	\newcommand{\reale}{\operatorname{Re}}
	\newcommand{\imag}{\operatorname{Im}}

	\begin{abstract}
		The Birkhoff conjecture says that the boundary of a strictly convex integrable billiard 
		table is necessarily an ellipse. In this article, we consider a stronger notion of 
		integrability, namely, integrability close to the boundary, and prove a local version 
		of this conjecture: a small perturbation of almost every ellipse that preserves 
		integrability near the boundary, is itself an ellipse. We apply this result to study local 
		spectral rigidity of ellipses using the connection between the wave trace of the Laplacian 
		and the dynamics near the boundary and establish rigidity for almost all of them.
	\end{abstract}
	
	\graphicspath{ {./figures/} }

	\section{Introduction}
	A mathematical billiard is a dynamical system, first proposed by G.D. Birkhoff in \cite{birk} 
	as a playground, where “the formal side, usually so formidable in dynamics, almost 
	completely disappears and only the interesting qualitative questions need to be considered”.
	
	Let $\Omega$ be a strictly convex $C^r$ domain in $\mathbb{R}^2$ with $r>3$. Let $x$ be a point in 
	the boundary $\partial \Omega$ and $\varphi$ is angle of a direction $V$ with 
	the clockwise tangent to $\partial \Omega$ at $x$. Let 
	$M := \left\lbrace \right (x, \varphi): x\in \partial \Omega, \varphi \in (0, \pi)\rbrace $. Then, one can consider 
	a billiard map $f : M \rightarrow M$, where $M$ consists of unit vectors with foot $x$ 
	on $\partial \Omega$ and with inward direction $v$. The map reflects the ray from the boundary of 
	the domain elastically, i.e. the angle of the incidence equals the angle of reflection.
	
	This dynamical system has simple local dynamics, however, its study turns out to be really complex and 
	has many important open questions. One group of ``direct'' questions is to pick domains 
	and analyse the properties of the billiard in them. For example, can they be chaotic, have 
	a positive metric entropy, or an open set of periodic points\footnote{Some recent progress was done in \cite{keagan}}, etc? A different way 
	to study the billiards is an indirect one, see e.g. \cite{gutkin}. Given some property of 
	the mathematical billiard in some $\Omega$, can something be said about the shape of $\Omega$?  
	
	In this paper we analyse so called integrable billiards. For example, if $\Omega$ is 
	an ellipse, then the billiard map is integrable, meaning all of dynamics can be described in 
	a relatively simple way using action-angle coordinates, see \cite{chang}. A natural 
	question then arises, the one asked by Birkhoff in \cite{birk} and by Poritsky in \cite{pori} 
	and formulated in the following conjecture:
	
	\begin{conjecture} 
		There are no other examples of integrable billiards.
	\end{conjecture}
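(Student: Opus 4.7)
The plan is to attack the full Birkhoff conjecture by combining two complementary pieces: first, propagating global integrability of the billiard into the near-boundary regime where the paper's techniques already apply, and second, closing the local-to-global gap through a continuation or connectedness argument. Concretely, assume $\Omega$ is a strictly convex $C^r$ domain whose billiard map admits a nontrivial first integral on an open subset of $M$. Glancing orbits (those with small $\varphi$) remain inside an integrable annulus, and so the invariant foliation must accumulate on $\partial\Omega$; a Lazutkin-type normal-form analysis near the boundary should then yield a smooth foliation by caustics in a one-sided neighborhood of $\partial\Omega$, realizing the hypothesis of \emph{integrability near the boundary} that the paper and its predecessor \cite{hks} take as their input.

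With near-boundary integrability in hand, the next step is to deploy the rigidity machinery that the paper develops: Taylor-expand the action-angle coordinates in the eccentricity-like parameter $\epsilon$ and exploit Diophantine properties of the rotation numbers of caustics to force the successive coefficients of the expansion to agree with those of a confocal family of conics. The paper identifies these coefficients in a perturbative neighborhood of a circle; for the global statement, one needs an inductive bootstrap in which each Taylor order is pinned down against a candidate ellipse determined by the boundary data of $\Omega$, and in which the irrationality arguments remain effective without the smallness of $\epsilon$.

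The final, and decisive, step is to promote such a coefficient-wise match into an honest identification of $\Omega$ with an ellipse. Two avenues seem natural. One is a continuation argument: connect $\Omega$ to a reference circle through a one-parameter family of integrable domains $\{\Omega_t\}$, and use the paper's local rigidity along the path to exclude any branching off the confocal family. The other is to upgrade the Taylor comparison to an algebraic identification of the integral of motion itself as a quadratic polynomial in velocity, at which point a Bialy-type rigidity theorem closes the argument.

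The hard part, and precisely the reason the Birkhoff conjecture has resisted resolution for nearly a century, is this last step. No mechanism is known that guarantees the space of integrable strictly convex domains is path-connected through integrable deformations, and no purely local or perturbative analysis can \textbf{a priori} exclude an exotic integrable $\Omega$ sitting far from every ellipse. Breaching the local-to-global gap would require either a genuinely global input — for instance a spectral or length-spectral rigidity statement extracting enough information from integrability to determine $\Omega$ uniquely — or a structural argument forcing the integral of motion to be polynomial in velocity. I expect that the strategy sketched above would reduce the conjecture to a precise question of this form rather than resolve it outright, and that any actual proof will need an ingredient qualitatively beyond what is currently available.
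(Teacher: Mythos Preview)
The statement you are attempting to prove is \emph{not proved in the paper}. It appears there as Conjecture~1.1 --- the full Birkhoff conjecture --- and is explicitly presented as an open problem motivating the work, not as a result. The paper's actual contribution is Theorem~\ref{maintheorem}, a \emph{local} rigidity statement for $q_0$-rationally integrable domains that are $C^n$-close to an ellipse of small eccentricity. There is consequently no ``paper's own proof'' of this statement to compare your proposal against.

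Your write-up is honest about this: you correctly identify the local-to-global gap as the decisive obstruction, and you concede in the final paragraph that the sketch does not close it. But that means what you have written is not a proof, nor even a proof proposal in the usual sense --- it is a survey of the difficulties. The specific mechanisms you suggest (a path of integrable domains back to a circle, or forcing the integral to be polynomial in velocity) are well-known desiderata, not available tools; neither the paper nor the surrounding literature supplies them. In particular, the paper's perturbative machinery (Taylor expansion in eccentricity, the $\xi_{j,j}$ formulas, the Vandermonde reduction) is valid only in a neighborhood of the circle and gives no leverage on a domain far from every ellipse. If you were asked to prove this statement, the correct response is that it is a famous open conjecture and the paper does not claim otherwise.
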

	
	Despite its simple-looking statement, the question still remains open. Various methods 
	were developed to attack this problem. For example, in \cite{math} the author has 
	proven, that if the curvature of the domain vanishes at one point, then it cannot be integrable.
	
	The two strongest non-perturbative results are due to Bialy in \cite{bialy} and Bialy and Mironov in \cite{bialymironov}:
	\begin{oldtheorem} (\cite{bialy})
		If the phase space of the  
		billiard ball map is globally foliated  by  continuous invariant  curves which  are  
		not  null-homotopic, then it corresponds to a billiard in a disc.
	\end{oldtheorem}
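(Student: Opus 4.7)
The plan is to adapt Hopf's rigidity theorem (originally for Riemannian tori without conjugate points) to the billiard setting: use the foliation by rotational invariant curves to produce a global Jacobi-type non-oscillation property along every orbit, then integrate a convexity inequality against the Liouville measure to force the curvature of $\partial\Omega$ to be constant. First, I would linearise the billiard map along an orbit $(x_n,\varphi_n)$ and reduce stability to the discrete Jacobi equation encoded in the mirror formula
$$\frac{1}{\ell_{n-1}}+\frac{1}{\ell_n}=\frac{2\kappa(x_n)}{\sin\varphi_n},$$
where $\ell_n=|x_{n+1}-x_n|$ is the chord length and $\kappa$ is the curvature of $\partial\Omega$.

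Next, by Birkhoff's theorem on invariant curves of monotone twist maps, each non-null-homotopic invariant curve in the foliation is a Lipschitz graph $\varphi=\psi_\omega(x)$ indexed by a rotation number $\omega\in(0,1/2)$. Differentiating this graph condition along an orbit supplies a globally bounded positive solution of the Jacobi equation above, so that no pair of bounce points along any orbit is mutually conjugate. The AM--GM bound $\tfrac{1}{\ell_{n-1}}+\tfrac{1}{\ell_n}\geq 2/\sqrt{\ell_{n-1}\ell_n}$ combined with the mirror formula then yields a pointwise inequality of the form $\sin\varphi_n\leq \kappa(x_n)\sqrt{\ell_{n-1}\ell_n}$, with equality iff $\ell_{n-1}=\ell_n$.

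Integrating this inequality against the Liouville invariant measure $d\mu=\sin\varphi\,d\varphi\,dx$ on the phase cylinder, and invoking Santalo's identity $\iint \ell\,d\mu=\pi\operatorname{Area}(\Omega)$ together with Gauss--Bonnet $\int_{\partial\Omega}\kappa\,dx=2\pi$, the averaged inequality should collapse to the classical isoperimetric bound $P(\partial\Omega)^2\geq 4\pi\operatorname{Area}(\Omega)$. The foliation hypothesis forces equality throughout, hence equality in the isoperimetric inequality, which characterises circles. Tracing back the equality case in the pointwise step also shows $\kappa$ is constant and all consecutive chord lengths coincide, consistent with the disc picture. I expect the main technical obstacle to be this third step: justifying the integration-by-parts manipulations when the invariant graphs are only Lipschitz rather than smooth, and verifying that the chain of inequalities degenerates to equality simultaneously almost everywhere under the hypothesis, rather than only on average. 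A secondary subtlety is ruling out the possibility that the continuous foliation contains curves of rational rotation number where Birkhoff instability regions could a priori replace true invariant circles.
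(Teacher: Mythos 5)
The paper does not prove this theorem; it is cited from Bialy \cite{bialy} and used as background, so there is no internal proof to compare against. Your high-level plan is nonetheless recognisably the one Bialy actually uses: Hopf-style rigidity, positivity of a Jacobi solution supplied by the invariant-curve foliation, integration over the Liouville measure, Santal\'o's formula, Gauss--Bonnet, and the isoperimetric inequality as the closing step. That skeleton is sound.

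The gap is in the middle step. The relation
\begin{equation*}
\frac{1}{\ell_{n-1}}+\frac{1}{\ell_n}=\frac{2\kappa(x_n)}{\sin\varphi_n}
\end{equation*}
is not an identity for billiard orbits. The genuine mirror equation has the focusing distances (the distances from the bounce point to the tangency points with the caustic, measured along the incoming and outgoing rays) in place of the chord lengths $\ell_{n-1},\ell_n$, and these are unequal for a general convex table. The actual discrete Jacobi equation along an orbit is the three-term recursion $a_{n-1}u_{n-1}+a_nu_{n+1}=b_nu_n$ with $a_n=\dfrac{\sin\varphi_n\sin\varphi_{n+1}}{\ell_n}$ and $b_n=2\kappa_n\sin\varphi_n-\sin^2\varphi_n\bigl(\tfrac{1}{\ell_{n-1}}+\tfrac{1}{\ell_n}\bigr)$, and $b_n$ is by no means forced to vanish. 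Because of this, your ``pointwise'' inequality $\sin\varphi_n\le\kappa(x_n)\sqrt{\ell_{n-1}\ell_n}$ does not follow, and its claimed equality case $\ell_{n-1}=\ell_n$ is not the right criterion either (for the disc one has $\ell_{n-1}=\ell_n$ everywhere, yet that inequality is strict).

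What you need instead is to use the foliation to produce a positive solution $u_n>0$ of the genuine Jacobi recursion, pass to the Riccati variable $v_n=u_{n+1}/u_n$, and sum the relation $b_n=a_n v_n + a_{n-1}/v_{n-1}\ge 2\sqrt{a_{n-1}a_n}$ along the orbit: Birkhoff's ergodic theorem then converts the telescoping sum into the integrated inequality $\iint b\,d\mu\ge 2\iint a\,d\mu$. After the change of variables that exploits invariance of $\mu$, this becomes $2\pi^2\ge\iint\dfrac{(\sin\varphi+\sin\varphi')^2}{\ell}\,d\mu$, and Cauchy--Schwarz against $\iint\ell\,d\mu=2\pi\,\mathrm{Area}(\Omega)$ and $\iint(\sin\varphi+\sin\varphi')\,d\mu=\pi P$ yields $4\pi\,\mathrm{Area}(\Omega)\ge P^2$. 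Combined with the isoperimetric inequality this forces equality and hence a disc; the inequality is thus averaged, not pointwise. Your closing remarks (Lipschitz regularity of the Birkhoff graphs, handling rational rotation numbers inside a genuine foliation) are real technical points but are downstream of this missing step.
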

	\begin{oldtheorem} (\cite{bialymironov})
	A centrally symmetric domain with an integrable billiard is an ellipse. \footnote{See remark \ref{local-Birkhoff}  for a more precise claim}
	\end{oldtheorem}
	
	\medskip 
	Another kind of inverse problems related to integrability is as follows. One can define 
	the length spectrum of a domain, by looking at perimeters of all periodic billiard orbits. 
	The closure of the union is called the length spectrum. How much of information is encoded 
	into this spectrum? This question is studied for example in \cite{Zelditch2004TheIS}.
	
	It turns out the length spectrum is connected with other spectra of the domain such as 
	the Laplace spectrum, the latter being the quantum version of the former. The famous inverse 
	problem of hearing the shape of a drum \cite{kac} in mathematical terms is to determine 
	a domain from its Laplace spectrum. The relation between spectra is explored in \cite{mm} and in \cite{zel}, among other papers. In \cite{iantchenko}, authors developed a new approach for studying this connection.
	
	Specifically, the problem is as follows. Given a bounded smooth planar domain and Laplace 
	equation inside of it, along with some standard specified boundary conditions, can the domain 
	be uniquely determined by the eigenvalues up to isometries? The relation to billiard dynamics 
	comes from the fact that the Laplace operator is structurally similar to the euclidean metric, 
	with billiard balls moving along the broken geodesics of the latter. 
	
	Several results were obtained by studying the various trace asymptotic, related to the Laplacian. 
	For example, discs were determined to be spectrally rigid since both perimeter and area are 
	heat trace asymptotic invariants and discs minimise the ratio between them, see \cite{brown}. 
	In \cite{mm}, authors considered wave trace asymptotic and obtained that some parametrized 
	family of domains, determined by an ODE on the curvature, are spectrally determined.  
	This method generally results in studying various Euler-Lagrange equations. However, 
	there are currently a limited number of feasible equations to study and it's doubtful whether 
	any studied domains satisfy them. As such, this method has problems studying general or specific domains.
	
	In a series of papers by Popov and Topalov deal with this problem using more dynamical approach. 
	Their project consists of five papers already, with \cite{popovtopalov} being the last one 
	at the moment. They study the connection between Laplace spectrum and KAM-theory. Specifically, 
	they obtained spectral rigidity of elliptical tables in the class of analytic symmetric domains under 
	weak conditions. Their results also apply to more general class of systems, for example to 
	multidimensional manifolds with broken geodesic flow.
	
	Another method was introduced in \cite{hezzel}. Their main idea was to connect the wave trace 
	singularities to the length spectrum and the dynamical side of the picture. They manage 
	to determine that the domain is integrable just by looking at the wave trace. This allowed them 
	to obtain spectral uniqueness for ellipses close to the disc. Combining this method with 
	our result about local Birkhoff conjecture	we prove local spectral rigidity for almost all ellipses.
	
\subsection{Strong Birkhoff Conjecture and rigidity of integrable nearly elliptic billiards }
	
	Of course, one should rigorously define what integrability means. Many definitions were introduced. 
	For example, one can say that the map $M$ is integrable if there exists a smooth integral of 	
	motion near the boundary.
	
	Here, we study one of the most common definitions of integrability, i.e. preservation of a smooth 
	foliation by caustics near the boundary. Specifically, we study the preservation of rational caustics.
	
	\begin{definition}
		A smooth convex curve $\Gamma \subset \Omega$ is  called a  caustic,  if  whenever  a trajectory is tangent to it, then it remains tangent after each reflection\footnote{There are other 
		types of caustics, e.g. those formed by two branches of hyperbolas in an ellipse. We do not study them in this paper}. 
	\end{definition}

	If $\Omega$ is a disk, then its caustics are concentric circles by a classical Lemma of Poncelet. For an ellipse, its caustics are co-focal ellipses. Note, that if one considers tangent directions, a caustic defines a natural map on $\partial \Omega$ onto itself, as such it has a rotation number. We define
	
	\begin{definition}
		We say that $\Gamma$ is an integrable rational caustic for the billiard map in $\Omega$, if the corresponding (non-contractible) invariant curve $\hat{\Gamma} \subset M$ consists of periodic points; in particular, the corresponding rotation number is rational.
	\end{definition}
	
	Particularly, the rotation number $\omega \in (0, 1)$, however we would only consider $\omega \in (0, 1/2]$ since others correspond to reverse dynamics on the same caustic. Caustics near the boundary correspond to small rotation numbers, so we would study those. All rational caustics are present in a disc, while other ellipses lack a caustic with $\omega = 1/2$.
	
	In the recent years, there have been several articles on this topic, concerning a local case, namely, when 
	$\Omega$ is a small deformation of an ellipse. For example, in \cite{adsk}, authors prove that if locally caustics 
	with rotation numbers $\frac{1}{q}$ for $q \ge 3$ are preserved near an ellipse with small eccentricity, then $\Omega$ 
	is also an ellipse. Later \cite{ks}  generalized this, studying ellipses with other eccentricities. However, these results 
	rely, for example, on preservation of caustics with rotation number $1/3$ and $1/4$, and those are not near the boundary.
	
	Our goal is to study domains with caustics only near the boundary 
	$\partial \Omega$.
	
	\begin{definition}
		Let $q_0 > 2$. If the billiard map, associated to $\Omega$ admits integrable rational 
		caustics with rotation numbers $\frac{p}{q}$ for all $0 < \frac{p}{q} \le \frac{1}{q_0}$ we say that $\Omega$ is $q_0$-rationally integrable.
	\end{definition}
	Domains that are $q_0$-rationally integrable and are near ellipses of small eccentricities 
	studied in \cite{hks}. However, they only succeeded in proving rigidity for $q_0 \le 5$ unconditionally. Our next result proves their Conjecture 1.9 that such ellipses are rigid and 
	generalises it to ellipses that are not nearly-circular. 
	
	\begin{theorem}
		Let any $q_0>0$ and $\mathcal E_0$ be an ellipse of eccentricity $0 < e < 1$ and semi-focal distance 
		$c$. Let $k \ge 39$ and $K>0$. Then there exist a locally finite set $\mathcal Z(q_0)\subset (0,1)$ 
		and $\varepsilon = \varepsilon(e, c, K, q_0)>0$ for any $e\notin \mathcal Z(q_0)$ such that 
		the following holds: if $\Omega$ is a $q_0$-rationally integrable $C^k$-smooth domain so 
		that $\partial \Omega$ is $C^k$-$K$ close and $C^1$-$\varepsilon$ close to $\mathcal E_0$, then 
		$\Omega$ is itself an ellipse. \label{maintheorem}
	\end{theorem}
	\begin{remark} \label{local-Birkhoff} 
	This result proves a local version of a strong Birkhoff conjecture for most ellipses. 
	Namely, for almost every eccentricities $e$ being integrable near the boundary 
	and being close to an ellipse of eccentricity $e$ implies it is an ellipse. 
	
	We can also state the result of Bialy-Mironov: any centrally symmetric domain with 
	$4$-rationally integrable billiard is an ellipse \cite{bialymironov}. 
	\end{remark}
	
\subsection{Spectral Rigidity of Ellipses}	
To state the next result we need auxiliary definitions.
	\begin{definition}
		A set $\mathcal Z \subset [0, 1)$ is called locally finite if it has no accumulation points in $[0, 1)$.
	\end{definition}

	\begin{definition}
		A set $\mathcal Z \subset [0, 1)$ is called small, if its accumulation points in $[0, 1)$ form a locally finite set.
	\end{definition}
	
	\begin{remark}
		Note that we do not need all the caustics with $0<p/q \le 1/{q_0}$. In fact, we only need to preserve 
		caustics with bounded $p \le 7$, with only a finite number of them having $p>1$. It is useful, since usually 
		it may be easier to prove their existence. In fact, we just need $2$ libration numbers: $(p=1, p=3)$,  or $(p = 1, p=5)$, or $(p=1, p=7)$. At least one of these pairs gives us rigidity, though we don't know which one exactly, see \cite{artin}.
	\end{remark}
	
	Now we describe 
	known spectral results and state our spectral rigidity results for ellipses. 
	Hezari and Zelditch \cite{hezzel12} proved local rigidity for ellipses, assuming the deformation to be $\mathbb{Z}_2\times\mathbb{Z}_2$ symmetric. They only assume that the deformation is $C^\infty$ smooth instead 	of analytical. In \cite{hezzel12} Dirichlet and Neumann boundary conditions are studied, while \cite{vi21} is devoted to Robin boundary conditions. 
	
	
	However, we think that the strongest result and the one 
	heavily used in this paper - is the one from Hezari and Zelditch \cite{hezzel}. In that paper, they prove 
	the global spectral rigidity for ellipses with small eccentricity. Let us present some ingredients of the proof. 
	
	For nearly-circular domains Hezari and Zelditch transform a global 
	problem into a local one. That techniques are similar to the one uses for proving the rigidity of 
	discs. Then, they prove the existence of a smooth generating function in a neighbourhood 
	of certain periodic points, namely, those whose orbits form $q$-gons inside $\Omega$. This, 
	together with studying the length spectrum of such domains, allows them to prove that the deformation 
	preserves caustics with rotation numbers $1/q$ for all $q \ge 3$. Finally, they can use 
	the aforementioned dynamical result of  \cite{adsk} to prove rigidity.
	
	As we can see, there is a method of bringing dynamical results into the spectral rigidity problem. 
	One could ask whether it's possible to get some additional results, using developments from \cite{adsk}. 
	For example, \cite{ks} deals with ellipses with arbitrary eccentricities, can spectral rigidity be proven for 
	those as well? The answer is that it's rather challenging to do, since the existence of the smooth generating 
	functions for orbits with rotation number like $1/5$ is unclear.

	However, in our dynamical result we do not need caustics with large rotation numbers, so we always should be near the boundary. Billiard dynamics near the boundary has a few of good properties. For example, there are Lazutkin coordinates that nearly 
	straighten the dynamics. This allows us to guarantee that the smooth generating functions exist. 
	Our main spectral rigidity result is the following theorem:
	
	\begin{theorem}
		Let $\mathcal E_0$ be an ellipse of eccentricity $0<e<1$ and semi focal distance $c$. 
		Let $k \ge 39$ and $K>0$. Then there exist a small set $\mathcal Z\subset (0,1)$ and 
		$\varepsilon = \varepsilon(e, c, K)>0$ for any $e\notin \mathcal Z$ such that $\mathcal E_0$ is uniquely 
		determined (up to isometries) by its Laplace spectrum among domains $\Omega$ with 
		$\partial \Omega$ being $C^\infty$ smooth, $C^k$-$\,K$ and 
		$C^{10}$-$\,\varepsilon$ close to $\mathcal E_0$.
		\label{theoremspectr}
	\end{theorem}

	\begin{remark}
		The result says that most ellipses are locally spectrally rigid. Note that our spectral result is local, compared to \cite{hezzel}. They obtain a global result, since disks have the minima of a spectrally determined function. So, domains close to the disk cannot be isospectral to the domains away from the disk by the continuity of the aforementioned function. For general ellipses this argument doesn't work, so the result is local. However, in an appendix we prove global length spectral rigidity, assuming strong global Birkhoff conjecture. 
	\end{remark}

	\begin{remark}
		The small set $\mathcal Z$ consists of several components. First of all, there is locally finite set $\mathcal Z_e$ for which the dynamical result doesn't work. Secondly, there are some challenges for spectral rigidity when certain periodic billiard orbits of different types have the same length in an ellipse. The set of those $e$ is called $\mathcal I_e$ and is studied in the last section, its accumulation set $\mathcal A_e$ is also studied and a first few points of the latter are computed there. See Figure \ref{incigrpah} for a plot of these sets.
	\end{remark}

	Finally, in order to study Laplace spectral rigidity, we use its connection to the length spectrum and essentially study the rigidity of the latter object. So, we get a similar result for the length spectrum rigidity automatically from our Laplace spectrum discussion. 
	
	\begin{theorem}
		Let $\mathcal E_0$ be an ellipse of eccentricity $0<e<1$ and semi focal distance $c$. Let $k \ge 39$ and $K>0$. Then there exist a locally finite set $\mathcal Z \subset (0,1)$ and $\varepsilon = \varepsilon(e, c, K)>0$ such that $\mathcal E_0$ is 
uniquely determined (up to isometries) by its length spectrum among domains $\Omega$ with 	$\partial \Omega$ being $C^\infty$ smooth, $C^k$-$\,K$ and $C^{10}$-$\,\varepsilon$ close to $\mathcal E_0$.
		\label{theoremlen} 
		\end{theorem}

	\begin{remark}
	We do not need all the spectral information, and we only use singularities of the wave trace near the multiples of the perimeter for Laplace case and part of the length spectrum near the multiples of the perimeter for its case.

	Smallness of the exceptional set of eccentricities  $\mathcal Z$ 
	implies it is of measure $0$, nowhere dense, countable.
	
	We denote that in this paper $e$ is always being an eccentricity of an ellipse, exponentials are denoted by $\exp$.
	\end{remark}

	\subsection{Outline of the proof}
	
	The proof breaks up into $3$ parts, each part was influenced by different papers. 
	
	The fist part deals with the proof of Theorem \ref{maintheorem} for ellipses that are close to the circle. 
	This part is an improvement to \cite{hks}. That paper also was dealing with the same problem. They 
	have obtained rigidity for ellipses with small eccentricity for $q_0 = 3, 4, 5$. For larger $q_0$, they 
	weren't able to get an unconditional result. Specifically, they have proven that ellipses are rigid, 
	provided some constant matrix (independent of deformation) is non-degenerate. The dimensions of 
	the matrix were of order $q_0$. The general formula for the coefficients of the matrix was not obtained, 
	so proving full rank condition was challenging. 
	
	The main idea behind the proof is the following. Each deformation can be described by a function on 
	a circle. In order to preserve $p/q$ caustic, the deformation should satisfy several conditions, each 
	of these can be thought of as some function on Fourier harmonics of deformation being zero. 
	These functions can be of course complicated and non-linear, but we may consider an expansion of 
	them over the deformation. The zeroth order term should of course be $0$, since ellipses are integrable. 
	So, we consider the linear term. If the dependency between the set of Fourier coefficients and 
	the collection of linearized functions (it can be thought of as a linear operator) is full rank or injective, 
	then no matter how we deform, there will always be some function in the family with non-zero linear 
	term, so its caustic will be destroyed by said deformation. 
	
	We consider the following expansion of a deformation in elliptic coordinates \eqref{elliptic-coord}:
	\begin{equation}
		\partial \Omega = \{(\mu_0 + \mu(\varphi), \varphi), \ \varphi \in [0, 2\pi]\}.
	\end{equation}
	
	Here, if $\mu_0$ is a constant value, it describes an ellipse, while $\mu$ is a perturbation. Let $\mu(\varphi) = a_0 + \sum_{k} a^+_k \cos(k\varphi) + a^-_k \sin(k\varphi)$ be the Fourier expansion of $\mu$ in elliptic coordinates.
		
	We derive explicit formulas for the linearised conditions in this paper. Specifically, 
	if we want to preserve $\omega = p/q$ caustic, we have a set of conditions that a deformation $\mu$ should satisfy. 
	
	These conditions are written in \eqref{apqdef} in the original integral form.  However, it is easier to consider them in the Fourier form, written below. Here, $A_{p, q, j}$ are some well-defined coefficients, independent of deformation.
	
	\begin{equation}
		\sum_{j=0}^{\infty} A^\pm_{p, q, j} a_j^\pm= O_e(q^8\left\| \mu \right\|_{C^1}^2).
		\label{startsys}
	\end{equation}

	The LHS of this formula is a linear functional, evaluated at a deformation. We will call those functionals $A_{p, q}^{\pm}$. Note the comma between $p$ and $q$, since we have 
	many conditions for the same caustic, for $p$ and $q$ may share a common divisor. 
	For example, functionals $A_{1,4}^\pm$ and $A_{2, 8}^\pm$ are both involved in preserving $1/4$ caustic. We also note that the conditions on odd and even parts of deformation are identical ($A_{p, q, j}^- = A_{p, q, j}^+$), so we drop $\pm$ from the notation as redundant.
	
	Our main goal for the paper would be to prove a basis property for these functionals. Then, any non-trivial deformation would break some of the conditions. Hence, we postpone working with a deformation until Section $7$, instead focusing on the functionals.
	
	The condition \eqref{startsys} arises from the following. If a $p/q$ caustic exists, then all the periodic orbits with $q$ reflections and $p$ rotations should share the same length. This is true, since the periodic orbits are always the critical points of the length functional. A length functional on arbitrary $q$ points on the boundary is defined in a following way:
	
	\begin{equation}
		\mathcal L(P_1, P_2, \cdots, P_q) = |P_1P_2|+|P_2P_3| + \cdots + |P_qP_1|.
	\end{equation} 
	
	Since we have a one-parameter family of critical points, a functional is constant along the family, as we have stated. 
	
	\begin{figure}
		\includegraphics[width=6cm]{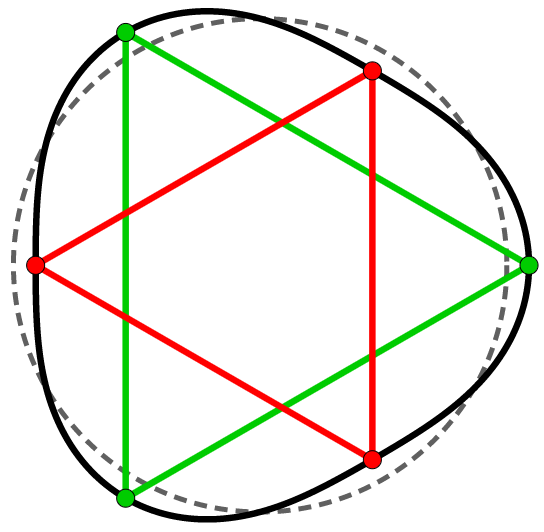}
		\centering
		\caption{$q$-harmonic destroys $p/q$ caustic. Normal deformation of a unit circle by $0.09\cos 3 \varphi$ produces $2$ orbits with $\omega = 1/3$ of different lengths.}
	\end{figure}
	
	It was proven in \cite{adsk}, that a periodic orbit in $\Omega$ is always a deformation of some periodic orbit in $\mathcal{E}$. So, a change of lengths under this deformation should stay constant along a one-parameter family of periodic orbits. This change of lengths is essentially the sum of values of $\mu$ at reflection points in a periodic orbit in an ellipse. These reflection points are $2\pi/q$ - equispaced in an action-angle parametrization of an ellipse, defined in Section 2. So, in these coordinates $\mu$ cannot have a $q$-periodic component, otherwise an orbit that reflects at minima of this component would get much shorter than the one falling on the maxima. So, harmonics of deformation in these action-angle coordinates that have frequencies, divisible by $q$, should be negligible, and exactly this is written in \eqref{startsys}, but converted to elliptic coordinates.  
	
	We establish several formulas 	for $A_{p, q, j}$ that allow for easier study of the conditions, see Lemma \ref{qlemma} and Lemma \ref{klemma}. They involve $\mathbf{q}$ and $k$, the nome 
	and the eccentricity of the caustic, defined in the next section.
	
	An exact formula is given by \eqref{nomesum}. This formula works for every ellipse, not only close to 
	the disc, as well as for every caustic, not only close to the boundary. 
	
	However, these formulas get a nicer representation when turned into an expansion for small eccentricities. The most important result is 
	an expansion in terms of eccentricity of an ellipse:
	
	\begin{theorem} 
		For every integer $j,q > 1$ of the same parity and $p$, $p/q < 1/2$ the function 
		$A_{p, q, j}(e)$ has the following expansion.
		\begin{equation}
			A_{p,q,j}(e) = \binom{j+y-1}{y}  
			\frac{e^{2y}}{2^{4y} \cos^{2y} \frac{\pi p}{q}  }  + O(e^{2y+2}), \; \; e \rightarrow 0, \; j \le q
		\end{equation}
		\begin{equation}
			A_{p,q,j}(e) = (-1)^y\binom{j}{y}\frac{e^{2y}}{2^{4y} \cos^{2y} 
			\frac{\pi p}{q}  }  + O(e^{2y+2}), \; \; e \rightarrow 0, \; j > q
		\end{equation}
		Here, $y = \frac{|q-j|}{2}$. Moreover, the following bound holds for small $e$ with some constant $C$.
		\begin{equation}
			|A_{p,q,j}(e)| \le C^{3y+j+1}e^{2y}
			\label{abound}
		\end{equation}
		If they are of different parity, $A_{p, q, j} = 0$.
		\label{th3}
	\end{theorem} 
	
	Hence, if $\Omega$ is a $q_0$-rationally integrable domain, then \eqref{startsys} should hold for all $0<\frac{p}{q} \le \frac{1}{q_0}$, with $\gcd(p, q) \le 2$, so the functional $A_{p, q}$ is available to us. In order to prove Theorem \ref{maintheorem}, we need to find a system of linear functionals (a linear operator) on harmonics that is complete, namely, find $(p_i, q_i),\ i\ge 1$ such that satisfying all conditions forces $\mu$ to be an elliptical deformation, that means to lie in the $5$-parameter family.
	
	One can see that the right part in \eqref{startsys} is not exactly zero, but an error term of the second order. However, if the operator is invertible, then we would get that $\mu$ is nearly elliptical deformation: the distance from $\Omega$ to the closest ellipse is of lesser order than $\mu$. So, if we originally consider $\Omega$ as a deformation of the closest ellipse, the size of perturbation will decrease by an order of magnitude. Repeating the same discussion for the new ellipse one would invert a new operator and find an even closer ellipse, and that would lead to contradiction. This idea was already used in several other papers, see e.g. \cite{adsk}, \cite{hks} and \cite{ks}. So, we primarily focus on inverting the operator and related issues.
	
	The formulas in Theorem \ref{th3} come from the following ideas. According to \cite{adsk}, to preserve a $p/q$ caustic one should make $q$-harmomic small. However, it is not a harmonic in elliptic coordinates, but a harmonic in the unique coordinates for each caustic, called {\it action--angle}. They are good since they simplify the associated dynamics to caustic quite well. However, they are different for each caustic, and we need a uniform parametrization to study all these conditions in. Hence we are using elliptic coordinates. So, to get these formulas and $A_{p, q, j}$, we convert the elliptic harmonics into action angle and demand the $q$-th one to be small. This conversion leads to elliptic integral calculations, that give rise to Theorem \ref{th3}.
	
	In the proof we greatly use elliptic nomes, Jacobi elliptic functions, Lambert series as well as several combinatorial identities connected to Stirling numbers.
	
	\medskip
	
	\begin{center}
		\large{A finite dimensional reduction}
	\end{center}
	Since we are currently dealing with small eccentricities,	it can be shown that the system of functionals can be reduced to essentially a finite-dimensional system. More precisely, for some large $q_1$ and $q>q_1$ it turns out that $A_{p, q}^\pm$ is close to $E_q^\pm$, a functional that gives the $q$-th Fourier coeffiient. Thus, 
	$A_{p, q}$ for all $q>q_1$ up to an error annihilates all Fourier 
	coefficients of $\mu$ with indices $q$. 
	
	 One can easily see it from Theorem \ref{th3}. From \eqref{abound} we can conclude that in \eqref{startsys} $A_{p, q, q}$ is by far the biggest coefficient by an order of $e^2$. So, $A_{p, q}(\mu) \approx 0$ states that essentially $a_{q}^\pm$ is very small. For larger eccentricities that is generally not the case and one should study harmonics in other parametrisations of an ellipse, not an elliptic one. We will come to it later.
	
	Hence, one can essentially not focus on large harmonics and caustics with large $q>q_1$, since these functionals are very close to the basis elements, and study small harmonics where we have the main struggle. This way, we essentially reduce our infinite dimensional operator to a finite dimensional one. 
	
	We should note that in the first part of the proof we already deal with a finite dimensional case. One could reduce it in the first part, but it is easier to just do it in the second part, which uses the first part as its core. In the second part we deal with ellipses of arbitrary eccentricity, and there is an intermediate step for them. So we cannot reduce until we have made this step in the second part.

	\begin{center}
		\large{A finite dimensional nondegeneracy}
	\end{center}
	
	The main difficulty we will face is with Fourier coefficients 
	whose indices $< q_0$, since vanishing of the other 
	$a_q^\pm$'s is closely related to satisfying 
	the respective conditions \eqref{startsys} with $p=1$. 
	This connection is used in \cite{adsk}. For harmonics with small indices, however, we lack $1/q$ caustic along 
	with $A_{1, q}$ functional, so we are proposing the following method. We study the dependency 
	of other functionals $A_{p',q'}$ on the $q$-th harmonic for $0<p'/q' \le 1/q_0$. The main idea 
	of this paper is to find a finite collection of functionals $A_{p_i, q_i},\ i=1,\dots,N(q_0)$ 
	having full rank or being non-degenerate, i.e. being in the kernel of all the $A_{p_i, q_i}$ implies $a_q^\pm=0$ for 
	$q< q_0$.  
	
	\medskip

	One can link this collection of conditions	
	to nondegeneracy of some finite square matrix. The coefficients 
	of this matrix will be related to $A_{p, q, j}$. They will be their main term coefficients in Theorem \ref{th3}. The matrix 
	will be a constant one, independent of $e$ and $\mu$. This constant matrix arises from the original one, when we take 
	a meaningful limit as $e \rightarrow 0$. As stated, we would use the irrationality of matrix coefficients and the algebraic 
	field theory to prove invertibility. We will discuss this now.
	
	\medskip
	
	\subsection{Algebraic structure and Vandermonde reduction}
	
	Let's give two examples of motivated by algebraic nature of our matrix. The first example would be 
	simple, while the second is more involved and is closely related to our problem.\\
	
	\textbf{Example 1.}
	Prove that the matrix
	\begin{equation}
		\begin{pmatrix}
			\sqrt[3]{2} & 5 & 2 \\
			4 & 3\sqrt[3]{2} & 7 \\
			2 & 8 & 1\\ 
		\end{pmatrix}
	\end{equation}
	is non-degenerate. \medskip 
	
	Of course, one could just compute the determinant of the matrix approximately and prove it. However, we can do it in a more 
	conceptual way. We substitute $z$ instead of $\sqrt[3]{2}$. 
	Then, we can find the determinant to be the polynomial from $z$ 
	of degree $2$ over rationals. If our original matrix had been 
	degenerate, $\sqrt[3]{2}$ would have been a root of this 
	polynomial. This, however, would mean that our polynomial 
	divides the minimal polynomial of $\sqrt[3]{2}$ over rationals. It is 
	impossible, of course, since the said minimal polynomial, 
	$z^3 - 2$, is of degree $3$, so it cannot divide polynomial 
	of degree $2$. So, the matrix is non-degenerate. 
	\medskip

	So, in this example we used irrationality of $\sqrt[3]{2}$ and algebraic field theory to prove non-degeneracy of a matrix with rational numbers. \\
	
	\textbf{Example 2.} Let $\alpha_j = e^{\frac{2\pi j}{5}i}$, $j=1,2,3,4$. Prove that the following matrix is non-degenerate:
	
	\begin{equation}
		\begin{pmatrix}
			3 & 7 & 0 & 0\\
			4 & 1 & 2 & 0\\ 
			1 & \alpha_1 & \alpha_1^2 & \alpha_1^3\\ 
			1 & \alpha_2 & \alpha_2^2 & \alpha_2^3 
			\label{112}
		\end{pmatrix}.
	\end{equation}
	
	This problem is fairly similar to the problems we will soon encounter. In particular, one could interpret the latter two lines in a matrix as representing preservation of caustics with $\omega = 1/5$ and $2/5$, respectively. Moreover, the method of handling this problem is very similar to the method in the main proof. 
	
	Here, one could also compute the determinant approximately. Alternatively, one could substitute $\alpha_2 = \alpha_1^2$ into 
	the matrix and use a method from previous example for 
	$\alpha_1$ instead of $\sqrt[3]{2}$:
	
	\begin{equation}
		\begin{pmatrix}
			3 & 7 & 0 & 0\\
			4 & 1 & 2 & 0\\ 
			1 & z & z^2 & z^3\\ 
			1 & z^2 & z^4 & z^6 \\ 
		\end{pmatrix}.
		\label{zmatrix}
	\end{equation}
	
	This will run into some problems though, because the resulting 
	polynomial from $z$ will be of degree $8$, while the minimal 
	polynomial of $\alpha_1$ over rationals, that is 
	$z^4 + z^3 + z^2 + z + 1$, only has degree $4$. Since $8  > 4$, 
	the determinant can divide the minimal polynomial. Let's propose 
	a viable option. 
	
	Recall that a number $m$ is a primitive root module $n$ if $m^j$ travels through all the residues, except $0$ modulo $n$. For example, $2$ is 
	a primitive root modulo $5$.
	\begin{lemma}
		The matrix \eqref{zmatrix} is non-degenerate, since $2$ is a primitive root modulo $5$.
	\end{lemma}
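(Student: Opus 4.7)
The plan is to compute the determinant of \eqref{zmatrix} as a polynomial $D(z)\in\mathbb{Q}[z]$ of degree at most $8$, and then show $D(\alpha_1)\ne 0$ by reducing $D(z)$ modulo the minimal polynomial $\Phi_5(z)=z^4+z^3+z^2+z+1$ of $\alpha_1$. The two sparse top rows $(3,7,0,0)$ and $(4,1,2,0)$ make Laplace expansion along rows $1,2$ very short: only the $2\times 2$ minors on column pairs $(1,2)$, $(1,3)$, $(2,3)$ are non-zero, so $D(z)$ is immediately a three-term $\mathbb{Z}$-combination of monomial differences $z^{a+2b}-z^{b+2a}$ coming from the Vandermonde-type minors of rows $3,4$.

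The central algebraic implication I will use is
\begin{equation*}
D(\alpha_1)=0 \quad\Longleftrightarrow\quad \Phi_5(z)\mid D(z)\ \text{in }\mathbb{Q}[z],
\end{equation*}
which holds because $\Phi_5$ is the minimal polynomial of $\alpha_1$ over $\mathbb{Q}$. Read Galois-theoretically, $\mathrm{Gal}(\mathbb{Q}(\alpha_1)/\mathbb{Q})\cong(\mathbb{Z}/5\mathbb{Z})^{*}$ is cyclic of order $\phi(5)=4$, and the automorphism $\sigma\colon\alpha_1\mapsto\alpha_1^{2}$ has order exactly $\operatorname{ord}_{5}(2)=4$ because $2$ is a primitive root modulo $5$; thus $\sigma$ generates the full Galois group, so its orbit $\{\alpha_1,\alpha_1^{2},\alpha_1^{4},\alpha_1^{3}\}=\{\alpha_1,\alpha_2,\alpha_3,\alpha_4\}$ exhausts the roots of $\Phi_5$, and any rational polynomial vanishing at $\alpha_1$ must vanish on this entire orbit. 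This is the step in which the primitive-root hypothesis is used, and it is structurally the same step that will reappear for higher cyclotomic fields in the main proof, where the orbit of some $\sigma\colon\alpha_1\mapsto\alpha_1^{g}$ might otherwise miss conjugates.

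Granted this equivalence, it suffices to check $\Phi_5\nmid D(z)$ by a direct reduction in $\mathbb{Q}[z]/(\Phi_5)$: using $z^{5}\equiv 1\pmod{\Phi_5}$, each monomial $z^{k}$ with $k\le 8$ is replaced by $z^{k\bmod 5}$, giving a polynomial of degree at most $4$ in $z$, which can be further collapsed to degree at most $3$ via $z^{4}\equiv -1-z-z^{2}-z^{3}$ if the coefficient of $z^{4}$ happens to be non-zero. A one-line numerical check on the handful of coefficients coming from the Laplace expansion then confirms non-vanishing of the reduced polynomial in the basis $\{1,z,z^{2},z^{3}\}$ of the quotient.

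The computational steps are mechanical and present no real obstacle; the only conceptual point is recognising that the primitive-root hypothesis is precisely what promotes the single equation $D(\alpha_1)=0$ to the divisibility $\Phi_5\mid D(z)$ through the Galois orbit of $z\mapsto z^{2}$. Without primitivity of $2$ modulo $5$ that orbit would be a proper subset of the primitive fifth roots of unity, so the hypothesis $D(\alpha_1)=0$ would not force $\Phi_5\mid D$ and the final reduction would no longer detect non-degeneracy.
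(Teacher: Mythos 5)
Your proposed strategy---compute $D(z)$ by Laplace expansion along the two sparse top rows and check that $D(z)\not\equiv 0\pmod{\Phi_5}$---would in principle succeed as an alternative proof, but it is not what the paper does, you never actually perform the promised ``one-line numerical check,'' and, most seriously, your account of \emph{where} the primitive-root hypothesis enters is wrong. The equivalence $D(\alpha_1)=0\iff\Phi_5\mid D$ holds for \emph{every} $D\in\mathbb{Q}[z]$, for the simple reason that $\Phi_5$ is the minimal polynomial of $\alpha_1$ over $\mathbb{Q}$: the Galois group of $\mathbb{Q}(\alpha_1)/\mathbb{Q}$ acts transitively on the primitive fifth roots of unity \emph{automatically}, because $\Phi_5$ is irreducible. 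No choice of generator $\sigma$ is needed, and the order of $2$ in $(\mathbb{Z}/5\mathbb{Z})^*$ plays no role in that step. Your final claim---that without primitivity of $2$ modulo $5$ the implication $D(\alpha_1)=0\Rightarrow\Phi_5\mid D$ would fail---is false. Consequently your direct-reduction proof, had you carried it out, would never invoke the primitive-root hypothesis that the lemma's statement explicitly highlights, and therefore it fails to explain the phenomenon the example is designed to illustrate.

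The paper's proof is qualitatively different and genuinely uses primitivity. Assume $\det=0$. From the irreducibility of $\Phi_5$ one gets $D(\alpha_j)=0$ for every $j$, so one may substitute $z=\alpha_2$ into the matrix. Its last row then becomes $(1,\alpha_2^2,\alpha_2^4,\alpha_2^6)=(1,\alpha_4,\alpha_4^2,\alpha_4^3)$, and vanishing of the determinant places this row in the span of $(3,7,0,0),\,(4,1,2,0),\,(1,\alpha_2,\alpha_2^2,\alpha_2^3)$. Since the first step already showed $(1,\alpha_2,\dots)$ lies in the span of the \emph{original} three rows, $(1,\alpha_4,\dots)$ lies there too. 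Iterating the squaring substitution $\alpha_j\mapsto\alpha_{2j}$ gives a chain $\alpha_1\to\alpha_2\to\alpha_4\to\alpha_3$, and \emph{here} is where primitivity of $2$ modulo $5$ matters: the chain visits \emph{all} four indices $1,2,3,4$ precisely because $2$ generates $(\mathbb{Z}/5\mathbb{Z})^*$. One then has four distinct rows $(1,\alpha_j,\alpha_j^2,\alpha_j^3)$ confined to a three-dimensional span, contradicting the non-vanishing of the Vandermonde determinant in the distinct $\alpha_j$'s. Were $2$ not primitive, the iteration would exhaust only a proper subgroup's worth of indices and the Vandermonde contradiction would not materialise. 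This ``Vandermonde reduction'' is exactly the mechanism reused in Section 5 for general primes $q$ (where real parts of roots of unity replace the roots, and rank replaces determinant); your computational shortcut bypasses it entirely and thus misses the point the worked example is meant to convey.
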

	
	Our method extends to the following 
	\begin{proposition}
		Let $z= e^{\frac{2\pi }{p}i}$ with $p > 3$ being prime and $2$ is a primitive root modulo $p$. Then the matrix \eqref{zmatrix} is non-degenerate.
	\end{proposition}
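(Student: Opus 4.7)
\emph{Plan of proof.} The strategy is to treat $z$ as a formal indeterminate and reduce the non-degeneracy of the matrix to a statement about divisibility by a cyclotomic polynomial, exactly in the spirit of Example~1. Let $D(z)\in\mathbb{Z}[z]$ denote the determinant of \eqref{zmatrix} regarded as a polynomial in the variable $z$. Since $\zeta:=e^{2\pi i/p}$ has minimal polynomial over $\mathbb{Q}$ equal to the cyclotomic polynomial $\Phi_p(z)$, of degree $p-1$, the condition $D(\zeta)\neq 0$ is equivalent to $\Phi_p(z)\nmid D(z)$ in $\mathbb{Q}[z]$.

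First I would compute $D(z)$ explicitly. Because the top two rows contain three zero entries in the rightmost columns (positions $(1,3)$, $(1,4)$, $(2,4)$), the permutation sum defining the determinant collapses to only a handful of surviving terms; cofactor expansion along the first row yields a nonzero polynomial of the form $D(z)=z^3 g(z)$ with $\deg g = 5$. Consequently, for any prime $p$ with $p-1 > 5$, i.e.\ $p \ge 7$, the bound $\deg \Phi_p > \deg g$ shows at once that the irreducible polynomial $\Phi_p$ cannot divide the nonzero $g$; combined with $\zeta^3 \ne 0$, this yields $D(\zeta) \ne 0$.

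The hypothesis that $2$ is a primitive root modulo $p$ rules out the prime $p = 7$ (where $2$ has order $3$), so the only remaining small prime in the admitted range is $p = 5$, where the degree argument becomes borderline. For this case I would exploit the extra relation $\zeta^5 = 1$ by reducing $g(z)$ modulo $z^5 - 1$, obtaining a polynomial of degree at most $4$; a direct calculation then shows the reduced polynomial is not a scalar multiple of $\Phi_5(z) = 1 + z + z^2 + z^3 + z^4$, which is the only way it could be divisible by $\Phi_5$, and this settles the borderline prime. The single genuinely computational point of the plan is this reduction at $p = 5$; everything else is the Example~1 obstruction that a minimal polynomial of degree $p-1$ cannot divide a polynomial of strictly smaller degree. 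The harder aspect, and the one that will become substantial when this strategy is transplanted to the full theorem, is controlling $\deg D$ against $p - 1$ once the matrix is replaced by the much larger arrays produced by the Fourier reduction in \eqref{finite-sys}; here the degree gap disappears and the primitive-root hypothesis must be used more essentially, via the Galois action $z \mapsto z^2$, to transfer information between rows.
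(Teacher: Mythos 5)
Your proof is correct, but it takes a genuinely different route from the paper's. The paper's proof is the ``Vandermonde reduction'' it has just introduced: assume $D(\alpha_1)=0$, deduce that $\Phi_p\mid D$, substitute the other roots $\alpha_{2^k}$ of $\Phi_p$ in turn, and use the primitive-root hypothesis to propagate membership of every $(1,\alpha_j,\alpha_j^2,\alpha_j^3)$ into the same $3$-dimensional span, contradicting the invertibility of the $4\times 4$ Vandermonde in the $\alpha_j$. Your argument is instead a degree count: $D(z)=z^3\bigl(-25z^5+19z^4+14z^3+6z^2-14\bigr)$, so $\deg g=5<\deg\Phi_p$ whenever $p\ge 7$, forcing $\Phi_p\nmid g$; the case $p=5$ is settled by reducing $g$ modulo $z^5-1$ and checking the reduction (namely $19z^4+14z^3+6z^2-39$) is not a scalar multiple of $\Phi_5$. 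Both are valid, but yours is the more elementary here and actually proves a stronger statement: non-degeneracy holds for every prime $p>3$, with the primitive-root hypothesis playing no role. (Your parenthetical remark that the hypothesis ``rules out $p=7$'' is therefore redundant --- your degree bound already covers $p=7$ unconditionally.) What each approach buys: yours is shorter and hypothesis-free for this toy matrix, but, as you yourself correctly flag at the end, the degree gap vanishes for the genuine matrices coming from \eqref{finite-sys}, whose size grows with $q_0$; there the paper's iteration under the Galois action $z\mapsto z^2$ (or more generally $z\mapsto z^{p_2 p_1^{-1}}$) is the essential mechanism, and the primitive-root/$q_0$-good condition becomes indispensable. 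Since the example in the paper exists precisely to rehearse that mechanism, your proof, while valid, bypasses the pedagogical point the authors are making.
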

	
	\begin{remark}
		Notice that $2$ is a not a primitive root modulo $7$ and the method of 
		proof of this proposition does not apply.  
	\end{remark}
	
	To prove both statements we propose a method, which we call a \textit{Vandermonde reduction}. We will reduce the matrix \eqref{112} to the Vandermonde matrix \eqref{117}.

	\begin{proof} The proof is by contradiction.
		Suppose the determinant is zero. Then, we know that
		\begin{equation}
			\left(1, \alpha_2, \alpha_2^2, \alpha_2^3 \right) 
			\in Lin\left((3, 7, 0, 0), (4, 1, 2, 0), 	
			\left(1, \alpha_1, \alpha_1^2, \alpha_1^3 \right) \right).
		\end{equation}
		We already know that the determinant of \eqref{zmatrix} divides 
		$z^4 + z^3 + z^2 + z + 1$. Since $5$ is prime, it has roots at all the unity roots, except $z = 1$, for example, 
		at $z = \alpha_2$. Substitute it into \eqref{zmatrix}:
		\begin{equation}
			\begin{vmatrix}
				3 & 7 & 0 & 0\\
				4 & 1 & 2 & 0\\ 
				1 & \alpha_2 & \alpha_2^2 & \alpha_2^3\\ 
				1 & \alpha_2^2 & \alpha_2^4 & \alpha_2^6 \\ 
			\end{vmatrix} = \begin{vmatrix}
				3 & 7 & 0 & 0\\
				4 & 1 & 2 & 0\\ 
				1 & \alpha_2 & \alpha_2^2 & \alpha_2^3\\ 
				1 & \alpha_4 & \alpha_4^2 & \alpha_4^3 \\ 
			\end{vmatrix} = 0.
		\end{equation}
		\begin{equation}
			\left(1, \alpha_4, \alpha_4^2, \alpha_4^3 \right) \in Lin\left((3, 7, 0, 0), (4, 1, 2, 0), \left(1, \alpha_2, \alpha_2^2, \alpha_2^3 \right) \right) 
			\subset 
		\end{equation}
		\begin{equation} \nonumber
			\qquad \qquad \qquad 
			Lin\left((3, 7, 0, 0), (4, 1, 2, 0), 	\left(1, \alpha_1, \alpha_1^2, \alpha_1^3 \right) \right).
		\end{equation}
		Further substituting $z = \alpha_4$ and so on leads us to 
		\begin{equation}
			\left(1, \alpha_{2^k}, \alpha_{2^k}^2, \alpha_{2^k}^3 \right) \in   Lin\left((3, 7, 0, 0), (4, 1, 2, 0), 	\left(1, \alpha_1, \alpha_1^2, \alpha_1^3 \right) \right)
		\end{equation}
		Now, since $2^k$ goes through all the residues modulo $5$ (here we use that $2$ is a primitive root), we get:
		\begin{equation}
			\left(1, \alpha_j, \alpha_j^2, \alpha_j^3 \right) \in Lin\left((3, 7, 0, 0), (4, 1, 2, 0), 	\left(1, \alpha_1, \alpha_1^2, \alpha_1^3 \right) \right), \; j = 1, 2, 3, 4.
			\label{5ex}
		\end{equation}
		
		This would mean that all these four vectors are linearly dependent on each other. Consequently, 
		
		\begin{equation}
			\begin{vmatrix}
				1 & \alpha_1 & \alpha_1^2 & \alpha_1^3\\ 
				1 & \alpha_2 & \alpha_2^2 & \alpha_2^3\\ 
				1 & \alpha_3 & \alpha_3^2 & \alpha_3^3\\ 
				1 & \alpha_4 & \alpha_4^2 & \alpha_4^3 \\ 
			\end{vmatrix} = 0.
			\label{117}
		\end{equation}
		
		This is of course impossible, since we have a Vandermonde of $\alpha_1, \alpha_2, \alpha_3, \alpha_4$, and it is nonzero, since all of them 
		are distinct from each other. This means, that the original determinant 
		couldn't have been zero, so the system is complete.\\
	\end{proof}
	
	Similar algorithm is described in Section 4 of this paper to prove the main result. The main differences is that instead of $5$ we take arbitrary prime number $q$, instead of roots of unity we have their real parts (cosines) and instead of determinant we study the rank of the matrix.
	
	\subsection{Selection of $(p_i, q_i)$}
	
	It can be noted that we used several properties of number $5$ in the second example. First, it was important that $5$ is a prime number, since otherwise the minimal polynomial would have been different. Moreover, we needed to get all the roots in the Vandermonde matrix, so effectively we have used that $2$ is a primitive root modulo $5$. For example, if we had chosen $7$ instead of $5$, we would have only connected $3$ roots: $\alpha_1, \alpha_2$ and $\alpha_4$, since $\alpha_4^2 = \alpha_1$. We wouldn't have a way of proving \eqref{5ex} for $j = 3, 5$ and $6$. So, in this case the method wouldn't work.
	
	Note that $2$ is a primitive root modulo $q$ if the minimal subgroup of $\mathbb{F}_q^*$, containing $\left\lbrace 1, 2 \right\rbrace $ is $\mathbb{F}_q^*$ itself. Since this example is similar to our problem, we give the following definition:
	
	\begin{definition}
		A prime number $q$ is said to be $q_0$-good, if $q > 7q_0$ and at least one of $3$, $5$ or $7$ is 
		the primitive root modulo $q$.
		\label{q0agood}
	\end{definition}
	
	The existence of such numbers is related to the following conjecture:
	
	\begin{conjecture}(Artin's conjecture)
		Every given integer that is nor a perfect square, nor $-1$ is a primitive root modulo infinitely many primes. 
	\end{conjecture} 
	 
	 The question is still open, although in \cite{artin} it was proven that it can only fail for some $2$ primes, hence 
	 the choice of $3, 5, 7$ (we cannot use $2$ as a prime). So, there is an infinitely many $q_0$-good numbers 
	 for every $q_0$.
	 
	 \subsection{Analytic continuation and rigidity for non-perturbative ellipses}
	 
	 The second part of the proof extends the results of the first part to ellipses with arbitrary eccentricity. It uses analytic continuation in terms of eccentricity $e$ to obtain them. Specifically, we can prove that ellipses with degenerate and not full rank operators have eccentricities that behave like the zeros of holomorphic function, meaning the their set is either the whole domain or is locally finite. Since in the first part we have proven the system is not degenerate when $e$ is close to zero (it is degenerate at $0$, though), we can say that the set is locally finite.
	 
	 In this part, we use some facts from \cite{ks}. They also study rigidity of ellipses with arbitrary eccentricity and use complex and functional analysis in their work. We should note however, that there are strong fundamental differences in our part. The main one is that we study analytic dependency on $e$, while \cite{ks} studies it with respect to boundary parametrization in a fixed ellipse. They also deal with the width of the strip of analycity, while we don't care about the width. 
	 
	 Specifically, we turn the previously mentioned set of functionals $A_{p_i, q_i}$ into a linear operator depending on $e$ and acting on the $L^2$ space of deformations. To prove rigidity, the operator should have $1$ in its resolvent set. Otherwise the system may have  degeneracy and a non-trivial solution. 
	 
	 This operator turns out to be compact and analytical over $e$ in terms of \cite{kato}. This analyticity particularly means that each element of the matrix is analytical over $e$ and the operators are uniformly bound. So, we would be able to use the result in \cite{kato} that states that $1$ is an eigenvalue for every $e$ in the domain or only for a locally finite set.
	 
	 First, we prove analycity of the coefficients of the operator, similar to $A_{p, q, j}$. They are just some functions, related to elliptic integrals and caustic parameters. For example, we need to prove that the dependency between caustic eccentricity and its rotation number and eccentricity is holomorphic. Of course, everything is not defined when $e$ or the rotation number are complex, but we claim we can extend the definition holomorphically. We extend everything into an extremely narrow neighborhood of the real line. This strip is uniform for all the caustics and the approach also works for caustics with other rotation numbers.
	 
	 Then, we construct the mentioned operator. However, if we just construct an operator, consisting of coefficients in Theorem \ref{th3} minus identity (since we prove that $1$ isn't an eigenvalue), it wouldn't look compact. Being compact requires the coefficients to decay at infinity, while the assymptotics in Theorem \ref{th3} for $y = 1$ and $j\rightarrow \infty$ hint otherwise. 
	 
	 The reason are the poorly chosen coordinates on the boundary. While elliptic coordinates functioned great in the first part, the Lazutkin coordinates $\vartheta$ make things easier when $q\rightarrow \infty$. They will also be defined in the next section. The reason is that they are still uniform and do not depend on a caustic, but they approximate action angle for small rotation numbers. They lack nice conversion formulas though, so we can't use them to efficiently study matrix coefficients and to do field theory. 
	 	 
	 After we have constructed the operator and proven its qualities, we can apply the result from \cite{kato}. However, to get what we want we also should prove that $1$ is not identically an eigenvalue. So, we consider the case of small eccentricities. Since we have proven a full rank property for those, $1$ will not be eigenvalue for them, so we make the first case impossible. 
	 
	 One may use similar analytic continuation over $e$ in other similar problems. For coefficient analycity we do not require the rotation number to be close to $0$. However, we suggest that for large $q$ we use closer to the boundary caustics. Otherwise Lazutkin coordinates will lose their main feature and the operators will fail to be compact. This can possibly be solved by letting the rotation numbers to approach a KAM-curve and studying its action-angle coordinates instead of Lazutkin, but we would still need to prove some other version of Proposition \ref{smallharm}.
	 
	 We also note that for other problems one may just consider $e = 0$ to prove that $1$ is not identically an eigenvalue, instead of expanding everything as $e\rightarrow0$. It will work in the context of \cite{ks} where we have all the $1/q$ caustics, but in our case we do not have full rank at $e = 0$, since $A_{1, q}$ and $A_{2, q}$ functionals coincide for the disc. That is why we considered the case of small eccentricities separately in the first part.
	 
	 At the end of the second part there is a technical section that derives rigidity of ellipses from the basis property (or from the fact that $1$ is not an eigenvalue). Similar proofs were given in \cite{adsk}, \cite{hks} and \cite{ks}. Our proof is extremely close to one in \cite{ks}, since the background is similar (caustic preservation functions forming a basis with non-small $e$), so we go over the proof relatively briefly. We note that \cite{ks} uses the words "Fourier coefficients" when expanding the deformation over a basis, since their elements of the functional basis are similar to trigonometric Fourier basis in properties. In our case there is less similarity (several functionals may share the same frequency for example), so we won't talk about the coefficients as Fourier. But still it doesn't affect the proof.
	 
	 \subsection{Laplace spectral rigidity}
	 
	 The third and the last part of the paper is devolved to study the Laplace spectral rigidity of ellipses of arbitrary eccentricity. We use the method of extending the dynamical result to the spectral case, already performed in several papers. We will leave the technical results for later, but our proof is based on Poisson relation, that states that each singularity in the wave trace (some distribution on the real line, that can be derived from Laplace spectrum) can be attributed to the billiard orbit(s) and is located at its length. 
	 
	 So, if the Laplace spectrum is preserved, then so is the wave trace as well as the length spectrum in some form. Then, we can derive the existence of caustics from it. If we are able to derive the existence of all the caustics that we have used for the dynamical result (we call this set $\mathcal F$), then we prove the deformation to be an ellipse.
	 
	 The main problem here is so called cancellation: if two orbits share the same length, along with some other characteristics, their contributions to the wave trace may cancel each other out, making it smooth at the point of their lengths. This is very bad for us, because it means that there may be points in the length spectrum that we have no way of obtaining from the Laplace spectrum. And these points may give information about the caustic.
	 
	 This part is closer to \cite{hezzel}, since their paper also dealt with similar problems, but for a nearly circular ellipse. Some part of their paper is spent to construct the smooth $q$-loop function $L_q(s)$ for all the orbits with $p=1$, for nearly circular domains. This phase function is very important, since without it we cannot find if the orbits with the same $p$ and $q$ cancel each other out or not (since they may share the same length). If there is such a function, then there can't be such a cancellation, as mentioned in \cite{mm} and \cite{hezzel}. Luckily for us, if an orbit is close to the boundary and its $p$ is bounded, then the $q$-loop function exists due to the Lazutkin coordinates $\vartheta$. So, this allows us not to focus on the study of distributions and just study the length spectrum of the domain. 
	 
	 The existence of the phase function only guarantees that there is no cancellation with orbits with the same $(p, q)$, but another orbits in $\Omega$ may still cancel with them. That is why we fear the incidence of orbits with different rotation numbers inside an ellipse. Because then under a small deformation the caustics may break up, there can be a lot of cancellations with no way of studying them (since the deformation is arbitrary), so we can lose dynamical information. 
	 
	 Hence we separately study the lengths of periodic orbits inside an ellipse. We also prove that they are holomorphic in $e$ and prove that this incidence may only happen on the small set of eccentricities.

	\subsection{Plan of the paper}
	
	In Section $2$ we will remind the reader various notions about ellipses. This includes properties of billiards inside of them, as various identities and definitions, related to elliptic functions. We will use these objects throughout the paper. 
	
	In Section $3$ we will apply those identities and develop formulas for coefficients $A_{p, q, j}$. Using them, we are going to prove Theorem \ref{th3}. 
	
	Section $4$ is essentially devoted to proving local strong Birkhoff conjecture for nearly circular ellipses. Using Theorem \ref{th3} we will reduce the rigidity problem to non-degeneracy of a finite matrix. Then, using algebraic field theory, we are going to prove the matrix to be full rank. This section breaks up into two parts: studying odd and even frequency harmonics of a perturbation. The mechanisms are slightly different and easier in the odd case.
	
	In Section $5$ we start to work on analytic continuation. Specifically, we take some notions, introduced in Section $2$, and see how they can be extended to complex eccentricities. We also study periodic orbit length in ellipses there to use in the spectral part.
	
	In Section $6$ we continue working with complex eccentricities. We introduce a rigidity operator there and study its properties for complex eccentricities using functional analysis and results from Section $5$. Our goal is to prove operator to be invertible. We show that it is either invertible for almost every $e$, or for no $e$ at all. Then, we link it with the results of Section $4$. Particularly, we reduce this operator to a finite matrix studied in Section $4$ for small $e$. This means that the operator is invertible for some small eccentricities.
	
	In Section $7$ we use the fact that the operator is invertible to complete the proof of Theorem \ref{maintheorem}. This section uses the same methods as \cite{ks}, so we do not go into the details of the proof. 
	
	In Section $8$ we deal with Laplace and Length spectral rigidity of ellipses. We use Theorem \ref{maintheorem} and prove Theorem \ref{theoremspectr} and \ref{theoremlen}. After it there is an Appendix, proving global length spectral rigidity of ellipses assuming global strong Birkhoff conjecture.
	
	\subsection{Acknowledgments}
	
	The author acknowledges the partial support of the ERC Grant \#885707. He also thanks Vadim Kaloshin for proposing the idea of the project and greatly aiding the implementation.  The author is also grateful to Hamid Hezari, Amir Vig, Steve Zelditch, Comlan E. Koudjinan, Corentin Fierobe, Ngo Nhok Tkhai Shon and Roman Sarapin for useful discussions. The author also acknowledges partial support of ISTern summer program. The project started in the summer of 2021, when the author was an intern at IST Austria.  
	
	\section{Elliptic functions and rational caustic preservation condition}

	Let us introduce some of the important notions, related to ellipses, that we will use in this paper. For simplicity we will assume that semi-major axis of an ellipse is $1$.
	
	First of all, every ellipse has 
	semi-major and semi-minor axis $1$ and $b$, as well as 
	eccentricity $e=\sqrt{1-b^2}$ and linear eccentricity 
	$c=e$.  Elliptic coordinates on a plane take the following form:
	\begin{equation}
		\begin{cases}
			x = c \cosh \mu \cos \varphi\\
			y = c \sinh \mu \sin \varphi
		\end{cases}
		\label{elliptic-coord}
	\end{equation}
	
	When $\mu = \mu_0 = \cosh^{-1}(1/e)$, $\varphi \in \left[0, 2\pi \right]$ gives a so called elliptic parametrization of a boundary of an ellipse. We will also study a perturbation of a domain using these coordinates and a periodic function $\mu(\varphi)$. From now on, 
	we have that
	\begin{equation}
		\partial \Omega = \mathcal{E}_{e, c} +
		\mu(\varphi).
	\end{equation}
	
	
	We also consider a family of caustics -- co-focal ellipses $C_\lambda$ parametrized by a parameter $\lambda$: 
	\begin{equation}
		C_\lambda = \left\lbrace (x, y)\in \mathbb{R}^2 : 
		\frac{x^2}{1 - \lambda^2}  + \frac{y^2}{b^2 - \lambda^2} = 1
		\right\rbrace, \; 0 < \lambda < b.
		\label{lambdadef}
	\end{equation}
	
	We shall also use another parameterization of caustics
	$k_\lambda = \frac{e}{\sqrt{1 - \lambda^2}}$, with $k_\lambda > e$ being the eccentricity of the caustic and a rotation number $\omega$. We also use incomplete and complete elliptic integrals of the first  and second kind, namely
	\begin{equation}
		F(\varphi, k) = 
		\int_{0}^{\varphi}\frac{d\tau}{\sqrt{1 - k^2\sin^2\tau}}; \; \; K(k) = F\left( \frac{\pi}{2}, k\right),
	\end{equation}
	
	and
	
	\begin{equation}
		E(\varphi, k) = \int_0^\varphi\sqrt{1 - k^2\sin^2\tau}d\tau; \; \; \; E(k) = E\left(\frac{\pi}{2}, k \right).
	\end{equation}
	
	Then, the following formula holds:
	
	\begin{equation}
		\omega(\lambda, e) = \frac{F(\arcsin(\lambda/b), k_\lambda)}{2K(k_\lambda)}.
	\end{equation}
	
	The rotation number $\omega$ is strictly increasing in $\lambda$  and goes to $0$ as $\lambda \rightarrow 0$. To simplify formulas we denote $\phi_\lambda = \arcsin(\lambda / b)$.
	
	We also write the boundary parametrization induced by caustic $C_\lambda$, denoted by $\theta$, such that the orbit 
	starting at $\theta_0$ and tangent to $C_\lambda$ hits the boundary 
	at $\theta_0 + 2\pi \omega_\lambda$. It is called an action-angle parametrization. We note that this parametrization is different for every caustic. We have the following relation:
	
	\begin{equation}
		\theta(\varphi, e, \lambda) = \frac{\pi}{2} \frac{F(\varphi, k_\lambda)}{F(\frac{\pi}{2}, k_\lambda)}
		\label{thetaphi}
	\end{equation}

	\begin{figure}
		\includegraphics[width=10cm]{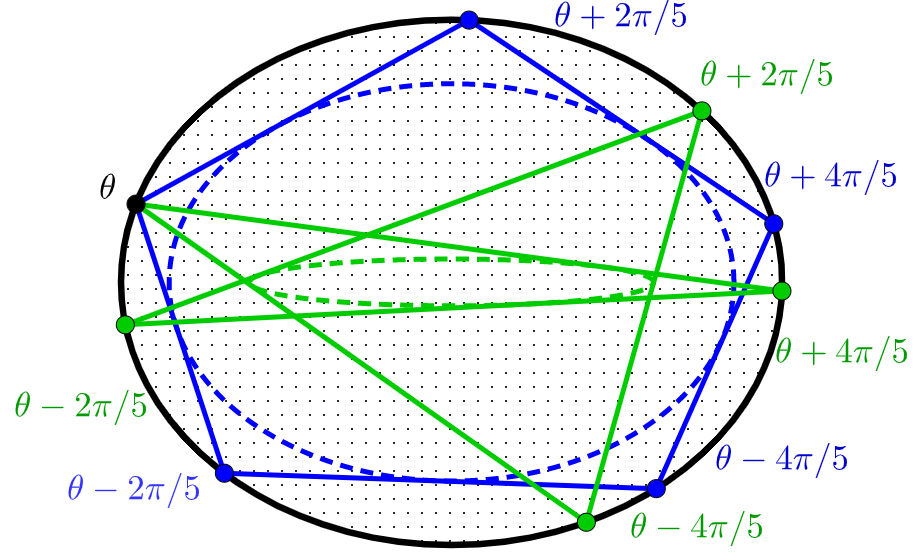}
		\centering
		\caption{Action-angle coordinates in an ellipse for $\omega = 1/5$ and $\omega = 2/5$. Parametrizations of different caustics do not agree. }
	\end{figure}
	
	There is also a Lazutkin parametrization of an ellipse, that we will denote $\vartheta$. They can be defined in terms of curvature, the following way:
	
	\begin{equation}
		\vartheta = C\int_{0}^{s}\rho^{-2/3}(s')ds',
	\end{equation}
	
	where $s$ is a parameterization of the boundary in terms of its length, while $C$ is a normalizing constant, so that $\vartheta \in [0, 2\pi]$.
	
	They are the limit of action-angle coordinates $\theta$ as the rotation number $\omega$ goes to zero. The formulas for it are the same, as for action-angle, one should just use $\omega = 0$, or $\lambda = 0$ and $k = e$. Because they are the limit, they nearly linearize billiard dynamics near the boundary, assuming $p$ is bounded.
	
	One can find more information about these objects and their relation to the billiards in \cite{hks} and \cite{ks}.
	
	Now we introduce some objects, related to the elliptic integrals. $k$ is called a modulus and $\varphi$ - an amplitude. One can define a complementary modulus $k' = \sqrt{1 - k^2}$. After that an elliptic nome can be introduced:
	
	\begin{equation}
		\mathbf{q} = \exp \left( - \frac{\pi K(k')}{K(k)}\right), \; \; \; 0 < \mathbf{q} < 1.
	\end{equation}
	
	The nome has the following expansion for small $k$.
	
	\begin{equation}
		\mathbf{q} = \frac{k^2}{16} + \frac{k^4}{32} + O(k^6), \; \; \; k \rightarrow 0
		\label{nomek}
	\end{equation}
	
	There is also a Jacobi amplitude function, inverse to the elliptic integral:
	
	\begin{equation}
		F\left( am(\theta, k), k\right)  = \theta, \; \; \; \varphi(\theta, e, \lambda) = am\left( \frac{4K(k_\lambda)\theta}{2\pi}, k_\lambda \right). \label{phitheta}
	\end{equation}
	
	Then, there is an important relation for us:
	
	\begin{equation}
		am\left( \frac{4K(k)\theta}{2\pi}, k\right)  = \theta + 2\sum_{n=1}^\infty \frac{\mathbf{q}^n}{n\left( 1 + \mathbf{q}^{2n} \right) } \sin(2n\theta)
		\label{amformula}
	\end{equation}
	
	for real $\theta$ and $0<k<1$.
	
	For the spectral result it will be important to study the lengths of periodic orbits, corresponding to a caustic. They all share the same length, that according to \cite{sieber} is
	
	\begin{equation}
		l_{p, q}^1 = 2q\sin\phi-\frac{2eq}{k}E(\phi, k) + \frac{4ep}{k}E(k).
		\label{lengthell}
	\end{equation}
	
	This length travels from $2q\sin(\pi p/q)$ to $4p$ as $e$ goes from $0$ to $1$.   
	
	We shall use the following results from Section 3, \cite{hks}. 
	
	\begin{lemma} (Lemma 3.2 \cite{hks})
		There exists $C > 0$ such that for each $e \in \left[0, \frac{1}{2} \right] $ and $\omega \in \left(0, \frac{1}{2} \right) $, we have 
		\begin{equation}
			\left|\lambda(e, \omega) - b \sin \omega \pi \right| \le Ce^2
		\end{equation}
		\label{lemma32}
	\end{lemma}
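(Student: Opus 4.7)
The plan is to use the explicit formula
\[
  \omega(\lambda, e) = \frac{F\bigl(\arcsin(\lambda/b); k_\lambda\bigr)}{2 K(k_\lambda)},
\]
together with the key observation that $k_\lambda^2 = a^2 e^2/(a^2 - \lambda^2)$ depends on $e$ only through $e^2$, and then to invert $\omega \mapsto \lambda$.

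First I would verify the leading order. Setting $e = 0$ gives $k_\lambda = 0$; since $F(\varphi; 0) = \varphi$ and $K(0) = \pi/2$, this yields $\omega(\lambda, 0) = \arcsin(\lambda/b)/\pi$, which inverts to $\lambda(0, \omega) = b\sin(\omega\pi)$. Thus the claim reduces to $|\lambda(e, \omega) - \lambda(0, \omega)| = O(e^2)$ uniformly in admissible $(\omega, e)$.

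Second, because $k_\lambda^2$ is smooth in $e^2$ for $\lambda < b$, so is $\omega(\lambda, e)$. Taylor expansion in $e^2$ at $e = 0$ gives
\[
  \omega(\lambda, e) - \omega(\lambda, 0) = e^2 \cdot g(\lambda, e),
\]
with $g$ computed from the $k^2$-derivatives of $F$ and $K$. Granting a uniform bound $|g| \le C_1$ on the admissible set, one obtains $|\omega(\lambda, e) - \omega(\lambda, 0)| \le C_1 e^2$. To invert, I would use monotonicity of $\omega$ in $\lambda$: from the $e = 0$ expression, $\partial_\lambda \omega|_{e=0} = 1/(\pi\sqrt{b^2 - \lambda^2}) \ge 1/(\pi b)$, and by continuity this lower bound persists for $e \in [0, 1/2]$. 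Writing $\lambda_0 := b\sin(\omega\pi)$, the identities $\omega(\lambda(e,\omega), e) = \omega$ and $\omega(\lambda_0, e) = \omega + O(e^2)$ then give $|\lambda(e, \omega) - \lambda_0| \le \pi b\, C_1\, e^2$, as desired.

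The main obstacle is making this bound uniform as $\omega \to 1/2$, i.e., as $\lambda \to b$ and $k_\lambda \to 1$. In that regime both $F(\arcsin(\lambda/b); k_\lambda)$ and $K(k_\lambda)$ diverge logarithmically, while their ratio stays bounded. To handle this degenerate corner of parameter space I would substitute $\tau = \pi/2 - \sigma$ in the defining integrals, isolate the singular contributions of the form $\int d\sigma/\sqrt{(1 - k_\lambda^2) + k_\lambda^2 \sigma^2}$ that appear in both $F(\arcsin(\lambda/b); k_\lambda)$ and $K(k_\lambda)$, cancel them against each other, and check that the surviving remainder is controlled by $e^2$. Alternatively, one may invoke classical asymptotics of incomplete elliptic integrals near the singular modulus $k = 1$. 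This uniform estimate near $\omega = 1/2$ is the heart of the argument; away from it the implicit-function inversion is entirely routine.
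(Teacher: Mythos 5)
The paper does not prove this lemma: it is quoted as Lemma~3.2 of Huang--Kaloshin--Sorrentino \cite{hks} and used as a black box, so there is no in-paper proof for me to compare your sketch against.

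Taken on its own terms, your outline starts correctly: the $e=0$ computation ($k_\lambda=0$, $F(\varphi;0)=\varphi$, $K(0)=\pi/2$, hence $\omega(\lambda,0)=\arcsin(\lambda/b)/\pi$) is right, and you correctly single out the corner $\lambda\to b$, $k_\lambda\to 1$, $\omega\to 1/2$ as the only place where anything nontrivial happens. But the plan you propose for that corner cannot work as stated, and this is a genuine gap, not merely an unfinished detail. You intend to establish a \emph{uniform} bound $|\omega(\lambda,e)-\omega(\lambda,0)|\le C_1 e^2$ and then divide by the crude lower bound $\partial_\lambda\omega\ge 1/(\pi b)$. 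The uniform $O(e^2)$ bound on the $\omega$-discrepancy is simply false. Writing $\theta=\arcsin(\lambda/b)$ so that $k_\lambda^2=e^2/(\cos^2\theta+e^2\sin^2\theta)$, and keeping only the leading term in the $k^2$-expansion of $F$ and $K$, one finds
\begin{equation*}
\omega(\lambda,e)-\omega(\lambda,0)\;\approx\;-\frac{k_\lambda^2\sin 2\theta}{8\pi}\;=\;-\frac{e^2\sin\theta\cos\theta}{4\pi\bigl(\cos^2\theta+e^2\sin^2\theta\bigr)},
\end{equation*}
which at $\cos\theta\sim e$ is of order $e$, not $e^2$; a direct check with the $\mathrm{arcsinh}$-asymptotics you mention (or a numerical one) confirms a discrepancy $\approx -0.06\,e$ there. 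Consequently your proposed cancellation of matched logarithmic singularities, aimed at making the surviving remainder $O(e^2)$, is chasing something that is not true. The lemma nevertheless holds in that regime, but for a compensating reason that your plan discards: $\partial_\lambda\omega$ blows up like $1/(\pi b\cos\theta)\sim 1/(\pi b e)$ as $\lambda\to b$, and it is the ratio $(\omega\text{-discrepancy})/\partial_\lambda\omega\sim e\cdot be=be^2$ that is small. Using only the floor $\partial_\lambda\omega\ge 1/(\pi b)$, as you do, yields $|\lambda(e,\omega)-b\sin\omega\pi|\lesssim be$ near $\omega=1/2$, which is off by a factor of $e$. The missing idea, therefore, is to track how both the $\omega$-discrepancy and $\partial_\lambda\omega$ degrade/improve with $\cos\theta$, and show the degradations cancel in the quotient, rather than to try to make each factor uniformly bounded on its own.

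Two secondary points worth pinning down as well. First, $b$ is itself a function of $e$ unless you explicitly fix $b$ and let $a=b/\sqrt{1-e^2}$ vary; your use of $\arcsin(\lambda/b)$ with $b$ fixed presupposes this normalization, and you should say so and check it is consistent with the lemma's $b$. Second, the claim that $\partial_\lambda\omega\ge 1/(\pi b)$ persists ``by continuity'' for all $e\in[0,1/2]$ is a hand-wave: you need a bound uniform on the non-compact parameter region $\lambda\in(0,b)$, and continuity on a compact set does not supply it.
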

	
	There are also other periodic orbits in an ellipse. We will not focus on studying them, but we use them in the spectral result. First of all, there are bouncing ball orbits, that just travel along axes of an ellipse. For the given $p$, their lengths are given by (major and minor resp.) by $4p$ and $4\sqrt{1-e^2}p$, the last one traveling between $4p$ and $0$ for $e \in [0, 1)$.
	
	The last class of periodic orbits are orbits, ones staying tangent to the hyperbolae. These orbits are located inside the "eyes" of the phase cylinder and all have a rotation number $1/2$. We won't use these orbits for dynamical results, but we need to know their lengths to prove they won't get in the way for the spectral result, making an incidence. 
	
	We will once again use \cite{sieber} to study their lengths. We also note that since the usual notion of rotation number $p/q$ or even number $p$ at all fails here, one can introduce a notion of short axis libration number $1 \le \tilde{p} < q/2$. This number indicates how many times did the orbit rotate around the center of the eye. We denote $\tilde{\omega} = \tilde{p}/q$. We also demand of course that $q$ is even, since reflection points alternate between eyes. 
	
	Hyperbolae correspond to the same equation \eqref{lambdadef}. But now, instead of $0 < \lambda^2 < 1 - e^2$, we have the case $1 - e^2 < \lambda^2 < 1$. Many of the same definitions above can be reintroduced for them, including their eccentricity:
	
	\begin{equation}
		k^{-1} = \frac{\sqrt{1 - \lambda^2}}{e}, \; \; \phi = \arcsin\left(\frac{\sqrt{1 - e^2}}{\lambda} \right) , \; \; \tilde{\omega} = \frac{F(\phi, k^{-1})}{2K(k^{-1})},\; \; \theta = \frac{\pi}{2}\frac{F(\varphi, k)}{F(\arcsin k^{-1}, k)}.
		\label{hyperb}
	\end{equation}
	
	Their lengths are also given in \cite{sieber}:
	
	\begin{equation}
		l_{\tilde{p}, q}^2 = 2q\sin \phi - 2eqE(\phi, k^{-1}) + 4e\tilde{p}E(k^{-1}).
		\label{lenghyper}
	\end{equation}
	
	Unlike the earlier caustic orbits, these do not exist for every eccentricity. Specifically, they only exist when
	
	\begin{equation}
		e \in \left(\cos \tilde{\omega} \pi, 1\right).
	\end{equation}  
	
	Particularly, their lengths travel from $2q\sin(\pi\tilde{p}/q)$ to $4\tilde{p}$ in this range of $e$.
	
	Although it is not the subject of the paper, we can see that orbits, tangent to hyperbolae and ellipses share many similarities. In fact, instead of integrability near the boundary, one can study integrability near a bouncing ball orbit. This was the subject of several recent papers, like \cite{treschev} and \cite{wangzhang}. Surprisingly, they state that there are billiard tables, where dynamics near a bouncing ball orbit is conjugate to a rotation by any irrational angle. First, in a series of papers, starting with \cite{treschev}, Treschev developed a formal series method for studying those domains, while \cite{wangzhang} proved that these domains are in fact Gevrey regular.

	\section{An alternative formula for caustic preservation}
	
	The formula \eqref{startsys} for caustic preservation is linear over $\mu$, meaning we can treat is as a linear condition on Fourier coefficients of the deformation. We can in fact write down this functional in a rather nice form. To get the coefficient in front of the harmonic, one should of course just substitute this harmonic as $\mu$. The following ideas will be using elliptic harmonics $\varphi$, due to the formula \eqref{amformula}. The point is to study this condition in the coordinate system independent on the caustic, so we cannot do it in action-angle $\theta_{p/q}$. The Lazutkin coordinates $\vartheta$ are also good, because they are close to $\theta_{p/q}$ for small rotation numbers. However, we cannot use them here, because we don't have the respective formula. So, we will be using elliptic for now. They are generally not close to $\theta_{p/q}$ and this will cause us significant problems later. 
	
	So, the main idea is to compute the following integral:
	
	\begin{equation}
		A_{p, q, j} = \frac{1}{\pi}\int_{0}^{2\pi}\cos\left( j\varphi(\theta_{p/q})\right) \cos(q\theta_{p/q})d\theta_{p/q}.
		\label{idef}
	\end{equation}
	
	This integral tells us how much does the $j$-cosine harmonic destroy $p/q$ caustic. The intuition behind the sum is as follows. Equation \eqref{startsys} just tells us that $q$-harmonic in action angle should be small. We want to express this condition in terms of elliptic harmonics. To do it, we just express $\mu$ into elliptic Fourier series:
	
	\begin{equation}
		\int_0^{2\pi}\mu(\theta_{p/q}) \cos(q\theta_{p/q})d\theta_{p/q} = \int_0^{2\pi} \sum_{j} \left( a_j \cos\left( j\varphi(\theta_{p/q})\right) \right) \cos(q\theta_{p/q})d\theta_{p/q}
	\end{equation}
	
	Now to get the left part of \eqref{startsys} out of this and \eqref{idef} one just needs to change the order of integration and summation.
	
	One doesn't have to integrate cosines, but sine on cosine will give $0$, and sine on sine will be identical. We use \eqref{phitheta} and then \eqref{amformula}.
	
	\begin{equation}
		A_{p, q, j}=\frac{1}{\pi}\int_{0}^{2\pi} \cos j \left( am \left( \frac{4K(k_{p/q})}{2\pi}\theta_{p/q}; k_{p/q}\right) \right) \cos q \theta_{p/q} d\theta_{p/q}=
	\end{equation}

	\begin{equation}
		\frac{1}{\pi}\int_{0}^{2\pi} \cos j \left( \theta_{p/q} + 2\sum_{ n=1}^{+\infty}\frac{\mathbf{q}^n}{n\left( 1+\mathbf{q}^{2n}\right) }\sin(2n\theta_{p/q}) \right) \cos q \theta_{p/q} d\theta_{p/q}
	\end{equation}
	
	Now we want to replace cosines with exponents, so that the series for $A_{p,q,j}$ would be simpler.
	\begin{equation}
	\reale \frac{1}{2\pi}\int_{0}^{2\pi} \exp \left( i j \left( \theta_{p/q} + 2\sum_{n=1}^{+\infty}\frac{\mathbf{q}^n}{n\left( 1+\mathbf{q}^{2n}\right) }\right) \sin(2n\theta_{p/q}) \right) \left( \exp (-i q \theta_{p/q}) + \exp (i q \theta_{p/q}) \right)  d\theta_{p/q}
	\end{equation}
	We transfer $\theta_{p/q}$ term to the right exponents and expand the left exponent
	of $A_{p,q,j}$.
	\begin{equation}
		\reale\frac{1}{2\pi} \sum_{l = 0}^{\infty} \frac{2^li^lj^l}{l!} \int_{0}^{2\pi}  \left( \sum_{n=1}^{+\infty}\frac{\mathbf{q}^n}{n\left( 1+\mathbf{q}^{2n}\right) } \sin(2n\theta_{p/q}) \right)^l\left(  \exp (-i (q - j) \theta_{p/q}) +   \exp (i (q + j) \theta_{p/q}) \right) d\theta_{p/q}
	\end{equation}
	Now let's for simplicity denote $\theta = \theta_{p/q}$. We also see that the exponents on the right are similar, so we will compute the following for $y = \frac{q-j}{2}$ and $y = \frac{-q-j}{2}$.
	
	\begin{equation}
		\reale\frac{1}{2\pi} \sum_{l = 0}^{\infty} \frac{2^li^lj^l}{l!} \int_{0}^{2\pi}  \left( \sum_{n=1}^{+\infty}\frac{\mathbf{q}^n}{n\left( 1+\mathbf{q}^{2n}\right) } \sin(2n\theta) \right)^l\exp (-2iy\theta) d\theta
	\end{equation}
	
	Now we use exponential formula for the sines. Then, we get a sum over all non-zero integers $n$. This sum converges exponentially, since $\mathbf{q}<1$.
	
	\begin{equation}
		\reale\frac{1}{2\pi} \sum_{l = 0}^{\infty} \frac{j^l}{l!} \int_{0}^{2\pi}  \left( \sum_{n\ne0}\frac{\mathbf{q}^n}{n\left( 1+\mathbf{q}^{2n}\right) } \exp(2in\theta) \right)^l\exp (-2iy\theta) d\theta
	\end{equation}

	Now we want to evaluate this integral. If we expand the $l$-power, there would be the sum of exponents in the integral. Their frequencies would be integer, since $2y$ is an integer, hence the integral wouldn't be zero only if the frequency would be zero. That means that we are only interested in a term with $\exp(2iy\theta)$ in the $l$-power. This of course means that $y$ is an integer. We can represent this integral in the following way.
	
	\begin{equation}
		\reale \sum_{l = 0}^{\infty} \frac{j^l}{l!} \sum_{x_1+x_2+\ldots + x_l = y}^{x_1\ne0, \ldots, x_l \ne 0} \frac{\mathbf{q}^{x_1+\ldots+x_l}}{x_1 \ldots x_l \left(1 + \mathbf{q}^{2x_1} \right) \ldots  \left(1 + \mathbf{q}^{2x_l} \right) }
	\end{equation}
	The sum here is something like a composition in combinatorics, meaning we do care about the order of the elements. So, the total result will be the following. 
	
	\begin{equation}
		 A_{p, q, j} = \sum_{l = 0}^{\infty} \frac{j^l}{l!} \sum_{x_1+x_2+\ldots + x_l = \frac{q-j}{2} or \frac{-q-j}{2}}^{x_1\ne0, \ldots, x_l \ne 0} \frac{\mathbf{q}^{x_1+\ldots+x_l}}{x_1 \ldots x_l \left(1 + \mathbf{q}^{2x_1} \right) \ldots  \left(1 + \mathbf{q}^{2x_l} \right) }
		 \label{nomesum}
	\end{equation}
	
	There is a bit nicer way to write this. Consider the following element of $\ell_{\mathbb{Z}}^1$, that we will call $w$.
	
	\begin{equation}
		w_n = \frac{\mathbf{q}^n}{n\left( 1 + \mathbf{q}^{2n}\right) }, \; n \in \mathbb{Z} \setminus {0}, \; \; \; \; \; w_0 = 0
	\end{equation}

	Then, we can introduce multiplication on $\ell_{\mathbb{Z}}^1$, using convolution $\ast$. Then, we get that our result is just
	
	\begin{equation}
		A_{p, q, j} = \exp_\ast(jw)_\frac{q-j}{2} + \exp_\ast(jw)_\frac{-q-j}{2}.
	\end{equation}

	Here, $q$ and $j$ should share the same parity, otherwise the result is just zero. This formula works for every eccentricity and rational caustic.
	
	\subsection{Bounding the results}
	
	Now, we want to achieve some uniform bounds on these coefficients to study their expansions as $e$ goes to zero or to study them as elements of an infinite matrix. The main idea is to use the exponential decay in the sum in \eqref{nomesum}. Here, we will demand that $\mathbf{q}$ is sufficiently small, since in goes to zero as $e\rightarrow 0$. First, we take the absolute value of every term and bound $\frac{\mathbf{q}^x}{1+\mathbf{q}^{2x}}\ge \mathbf{q}^{|x|}$. 
	\begin{equation}
	|A| \le  \sum_{l = 0}^{\infty} \frac{j^l}{l!} \sum_{x_1+x_2+\ldots + x_l = \frac{q-j}{2} or \frac{-q-j}{2}}^{x_1\ne0, \ldots, x_l \ne 0} \frac{\mathbf{q}^{|x_1|+\ldots+|x_l|}}{|x_1| \ldots |x_l|} \le \sum_{l = 0}^{\infty} \frac{j^l}{l!} \sum_{|x_1|+|x_2|+\ldots + |x_l| \ge y}^{x_1\ne0, \ldots, x_l \ne 0} \frac{\mathbf{q}^{|x_1|+\ldots+|x_l|}}{|x_1| \ldots |x_l|}  
	\label{boundi}
	\end{equation}

	Here, $y = \frac{|q-j|}{2}$. We also have relaxed a sum a bit. Now we can just proceed to the sum over positive integers:
	
	\begin{equation}
		|A| \le \sum_{l = 0}^{\infty} \frac{j^l}{l!} \sum_{x_1+x_2+\ldots + x_l \ge y}^{x_1, \ldots, x_l \in \mathbb{N}} \frac{2^l\mathbf{q}^{x_1+\ldots+x_l}}{x_1 \ldots x_l} \le \sum_{s=y}^{\infty} \mathbf{q}^s\sum_{l = 0}^{\infty} \frac{(2j)^l}{l!}  \sum_{x_1+x_2+\ldots + x_l = s}^{x_1, \ldots, x_l \in \mathbb{N}} \frac{1}{x_1 \ldots x_l} .
	\end{equation}
	
	Now we obviously have $l \le s$ for a non-zero result, hence we can modify the sum a little.
	
	\begin{equation}
		|A|  \le \sum_{s=y}^{\infty} (2\mathbf{q})^s\sum_{l = 0}^{s} \frac{j^l}{l!}  \sum_{x_1+x_2+\ldots + x_l = s}^{x_1, \ldots, x_l \in \mathbb{N}} \frac{1}{x_1 \ldots x_l} =  \sum_{s=y}^{\infty} (2\mathbf{q})^s \binom{j+s-1}{s},
		\label{binomform}
	\end{equation}
	
	since the sum over $l$ is a known formula for a binomial coefficient, discussed in \cite{stirling}. Now we do some rough estimates, like the following.
	\begin{equation}
	|A_{p,q,j}|  \le  \sum_{s=y}^{\infty} (2\mathbf{q})^s 2^{j+s-1} = 2^{j-1} \sum_{s=y}^{\infty} (4\mathbf{q})^s = \frac{2^{2y+j-1}\mathbf{q}^y}{1-4\mathbf{q}} \le 2^{2y+j+1}\mathbf{q}^y
	\end{equation}

	\subsection{Asymptotic for small nomes}
	
	The previous formula allows us to produce an asymptotic for $A$ as $\mathbf{q} \rightarrow 0$. In particular, we propose the following lemma:
	
	\begin{lemma}
		For every natural $j,q > 1$ of the same parity and $p$, $p/q < 1/2$ the function $A_{p, q, j}(\mathbf{q}(e))$ has the following expansion.
		\begin{equation}
			A_{p, q, j}(\mathbf{q}) =  \binom{j+y-1}{y}\mathbf{q}^y + O(\mathbf{q}^{y+2}), \; \; \mathbf{q} \rightarrow 0, \; j \le q
		\end{equation}
		\begin{equation}
			A_{p,q,j}(\mathbf{q}) = (-1)^y  \binom{j}{y}\mathbf{q}^y + O(\mathbf{q}^{y+2}), \; \; \mathbf{q} \rightarrow 0, \; j > q
		\end{equation}
		Here, $y = \frac{|q-j|}{2}$.
		\label{qlemma}
	\end{lemma} 
	
	\begin{proof}
		We start by analyzing \eqref{nomesum}. Firstly, if the sum of $x$-s is equal to $\frac{-q-j}{2}$ we already get an order of $\mathbf{q}^{\frac{q+j}{2}}$ just by following the same bounds \eqref{boundi}, so it will go to the error term. We are only interested in the case $\frac{q-j}{2}$. Then, let $\sigma = -1$ if $j>q$ and $1$ otherwise. Then, by inverting all the $x$-s if $\sigma = -1$, we get:
		
		\begin{equation}
			A = \sum_{l = 0}^{\infty} \frac{\sigma^lj^l}{l!} \sum_{x_1+x_2+\ldots + x_l = y}^{x_1\ne0, \ldots, x_l \ne 0} \frac{\mathbf{q}^{x_1+\ldots+x_l}}{x_1 \ldots x_l \left(1 + \mathbf{q}^{2x_1} \right) \ldots  \left(1 + \mathbf{q}^{2x_l} \right) } + O(\mathbf{q}^{y+2}).
		\end{equation}
		Now, either all the $x$-s are positive, or the sum of their absolute values is at least $y+2$. When the latter is true, we just use the same bounds \eqref{boundi} and get an order of $O(\mathbf{q}^{y+2})$. So, we get:
		
		\begin{equation}
			A = \sum_{l = 0}^{y} \frac{\sigma^lj^l}{l!} \sum_{x_1+x_2+\ldots + x_l = y}^{x_1, \ldots, x_l \in \mathbb{N}} \frac{\mathbf{q}^y}{x_1 \ldots x_l \left(1 + \mathbf{q}^{2x_1} \right) \ldots  \left(1 + \mathbf{q}^{2x_l} \right) } + O(\mathbf{q}^{y+2}).
		\end{equation}
	
		Now, we have a finite sum and we can collect the common term $\mathbf{q}^y$. We can also get rid of $1+\mathbf{q}^{2x}$, since it goes to the error term.
		
		\begin{equation}
			A =  \mathbf{q}^y \sum_{l = 0}^{y} \frac{\sigma^lj^l}{l!} \sum_{x_1+x_2+\ldots + x_l = y}^{x_1, \ldots, x_l \in \mathbb{N}} \frac{1}{x_1 \ldots x_l} + O(\mathbf{q}^{y+2})
		\end{equation}
	
	The inside sum is related to Stirling numbers of the first kind $S_n^m$. According to \cite{stirling}, the sum reduces to
	
	\begin{equation}
		A = \mathbf{q}^y \sum_{l = 0}^{y} \frac{\sigma^lj^l(-1)^{y-l}}{y!}S_y^l + O(\mathbf{q}^{y+2}).
	\end{equation} 
	
	For $j > q$ the result follows again from \cite{stirling}:
	
	\begin{equation}
			A = (-1)^y\mathbf{q}^y \frac{1}{y!}\sum_{l = 0}^{y} j^lS_y^l + O(\mathbf{q}^{y+2}) = (-1)^y\binom{j}{y}\mathbf{q}^y +  O(\mathbf{q}^{y+2}).
	\end{equation}
	
	For $j \le q$ the result also follows from \cite{stirling}:
	
	\begin{equation}
		A = \mathbf{q}^y \frac{1}{y!}\sum_{l = 0}^{y} (-1)^{y+l}j^lS_y^l + O(\mathbf{q}^{y+2}) = \binom{j+y-1}{y}\mathbf{q}^y + O(\mathbf{q}^{y+2})
	\end{equation} 

	\end{proof}
	
	\subsection{From nome to eccentricity}
	
	Now we know that $\mathbf{q}$ depends on $k_{p/q}$ and it -- on $e$. We want to express both the bound and the asymptotic first through $k_{p/q}$ and then through $e$. We easily achieve the first step by using \eqref{nomek}:
	
	\begin{lemma}
		For every natural $j,q > 1$ of the same parity and $p$, $p/q < 1/2$ the function $A_{p, q, j}(k_{p/q}(e))$ has the following expansion.
		\begin{equation}
			A_{p,q,j}(k) = \binom{j+y-1}{y} \left( \frac{k^{2y}}{2^{4y}} + \frac{yk^{2y+2}}{2^{4y+1}}\right)  + O(k^{2y+4}), \; \; k \rightarrow 0, \; j \le q
		\end{equation}
		\begin{equation}
			A_{p,q,j}(k) = (-1)^y \binom{j}{y} \left( \frac{k^{2y}}{2^{4y}} + \frac{yk^{2y+2}}{2^{4y+1}}\right)  + O(k^{2y+4}), \; \; k \rightarrow 0, \; j > q
		\end{equation}
		Here, $y = \frac{|q-j|}{2}$. Moreover, the following bound holds for small $k_{p/q}$.
		\begin{equation}
			|A_{p,q,j}(k)| \le 2^{3y+j+1}k^{2y}
		\end{equation}
	\label{klemma}
	\end{lemma} 
	
	Now we will use Lemma \ref{lemma32} to express $k_{p/q}$ in terms of the eccentricity and get the bound and the expansion. 
	\begin{equation}
		k^2 = \frac{e^2}{1 - \lambda^2}
	\end{equation}

	To achieve bounds we also can use that $k_{p/q} < k_{1/3}$, so we can just bound $k_{1/3}$. In particular, we get Theorem \ref{th3}.

	\section{Finite-dimensional matrices for near-circular ellipses}
	
	We have some knowledge about coefficients $A_{p, q, j}$. The original purpose was to use them to study functionals $A_{p, q}$ and the linear operator that arises from combining them. As mentioned earlier, we want to cutoff this operator to a finite dimensional square matrix that connects small frequency harmonics and $A_{p, q}$ with small $q$. We cannot just take all the $A_{p, q, j}$ that we have, because there are more of those, then harmonics, so we have to choose between them, since we want a square matrix. The coefficients of this matrix will just be $A_{p, q, j}$, as one can see from \eqref{startsys}. Important thing to note is that the first five harmonics are the one close (tangent to) elliptic perturbations, so we will study them separately, but now we will have $j \ge 3$. Also, $q_1$ that will be defined is unrelated to the first element of $q_i$. The following lemma is the main goal of this section.
	
	\begin{lemma}
		For any $q_0$, there exists a cutoff $q_1 > q_0$, and a family $\{p_i, q_i\}_{i=1}^{q_1 - 2}$, such that the following is satisfied: $\forall i: p_i/q_i < 1/q_0, \; \; q_i \le q_1$. Moreover, the dependency of functionals $A_{p_i, q_i}$ on the Fourier harmonics of the deformation starting from frequency $j = 3$ to $j = q_1$ is non-degenerate for small $e$. This means that if we create a square matrix $A$ of size $q_1 - 2$ with its $(i, j)$ element equal to $A_{p_i, q_i, j}$ where $1 \le i \le q_1-2$, $3 \le j \le q_1$, it will have nonzero determinant for small $e > 0$.
		\label{mainfieldlemma}
	\end{lemma}
	
	A few important notes should be said here. First, in the lemma we describe a square matrix, but to find $\{p_i, q_i\}$ it is easier to add all the possible functionals $A_{p, q}$ with $p/q < 1/q_0$ and $q \le q_1$ as new rows, making the matrix rectangular. This is not a big deal, since we would only need to proof that this matrix has a rank of $q_1 - 2$. Then we can remove excess rows not reducing the rank. We will get a square matrix and the $A_{p, q}$ it rows correspond to will become $p_i$ and $q_i$. Another note is that $A_{p, q, j}$ is zero when $q$ and $j$ are of different parity, so $A$ is just a direct sum of two matrices, one corresponding to odd $j$ and $q$, another for even. Inverting $A$ is equivalent to inverting both small matrices, so the problem splits in two. We will start by inverting odd matrix, since it is simpler, later we will invert the even one. We also say that if Lemma \ref{mainfieldlemma} holds for some $q_1$, then it also holds for larger ones, as will become evident. So, we can take different $q_1$ in even and odd case, and later just take the maximum.
	
	\subsection{Odd nodes}
	
	\subsubsection{Changing the matrix}
	Our first step is to modify the matrix a little by multiplying the columns and rows by some values. This won't change the rank of the matrix. Right now we will study odd nodes, so $j = 2k+1$ and $q = 2r+1$. We consider $k \ge 1$ and $r > r_{0, odd} = \frac{q_0-1}{2}$.
	
	Specifically, we will make the following transformation:
	
	\begin{equation}
		\tilde{A}_{p, r, k} =  \frac{2^{4r - 3k-1} \cos^{2r-2}\frac{\pi p}{q}}{e^{2(r - k)}} A_{p, r, k}
		\label{scaleodd}
	\end{equation}

	Then, we get the following:
	
	\begin{equation}
		\tilde{A}_{p, r, k} = \binom{r+k}{r-k}   \left( \cos \frac{2\pi p}{2r+1} + 1 \right)^{k-1}     + O(e^{2}), \; \; e \rightarrow 0, \; k \le r
	\end{equation}
	\begin{equation}
		\tilde{A}_{p, r, k} = O(e^{2}), \; \; e \rightarrow 0, \; k > r
	\end{equation}
	
	We study the finite matrix, so now we can take the limit as $e$ goes to zero. If it is full rank, then our matrix is full rank for small eccentricities. We get
	
	\begin{equation}
		\hat{A}_{p, r, k} = \binom{r+k}{r-k}   \left( \cos \frac{2\pi p}{2r+1} + 1 \right)^{k-1},\; k \le r;  \; \; \; \hat{A}_{p, r, k} = 0,\; k > r.
		\label{matrixel}
	\end{equation}
	
	The new matrix is independent of $e$ and the deformation. It is some constant matrix. We can also define limit functionals $\hat{A}_{p, q}$. Now, let us prove this matrix can be made full rank, if one chooses correct pairs $(p, q)$.
	
	\subsubsection{Choosing caustics}
	
	We need to find $q_{1, odd} = 2r_1+1$, so that the matrix is full rank and contains only conditions for caustics with $r \le r_1$. We will denote a matrix with coefficients $\hat{A}_{p, r, k}$ as $\hat{A}^{r_1}$. The rows of this matrix will correspond to every functional $\hat{A}_{p, 2r+1}$ with $p/(2r+1) < 1/q_0$ and with $r \le r_1$. The columns will correspond to every odd cosine elliptic harmonic of deformation from $k = 1$ to $k = r_1$ inclusive. Denote $K_{r_1} = \ker \hat{A}^{r_1}$, and $\kappa_{r_1} = \dim K_{r_1}$. We need to prove that for some $r_1$ the matrix $\hat{A}^{r_1}$ is full rank, or that $K_{r_1} = 0$ or that $\kappa_{r_1} = 0$. If the rank is full, this would mean that we can choose the family of $r_1$ caustics, whose rows form a full rank square matrix. That would mean that this family of caustics nearly kills first $r_1$ harmonics. We will use it later, but right now let's prove that this $r_1$ exists.
	
	\begin{lemma}
		There exists $r_1$, such that $\hat{A}^{r_1}$ is full rank.
	\end{lemma}
	
	 We propose an algorithm of the construction:

	\begin{enumerate}
		\item Start with $r_1 = r_{0, odd} + 1$. Then, $\hat{A}^{r_1}$ has $1$ row for caustic $1/(2r_1+1)$ and $r_1$ columns in it, so $\kappa_{r_1} = r_1 - 1$. 
		\item Set $r_1 = r_1 + 1$, and consider the difference between $\hat{A}^{r_1}$ and $\hat{A}^{r_1-1}$. We have added a column for harmonic $2r_1+1$, and some rows (at least one: $\hat{A}_{1, 2r_1+1}$) for caustics $p/(2r_1+1)$. Since only these rows have non-zero elements in the new column due to \eqref{matrixel}, we have $\kappa_{r_1} \le \kappa_{r_1-1}$.
		\item If $\kappa_{r_1} = 0$, then $\hat{A}^{r_1}$ is complete, and we have finished the proof. 
		\item If $q = 2r_1+1$ is a prime number with some properties ($q_0$-good) and $\kappa_{r_1} = \kappa_{r_1-1}$, we prove that $\kappa_{r_1} = 0$. So, otherwise the rank should fall at least by one. So we should hit zero at some point.
	\end{enumerate}
	
	\subsubsection{Field introduction}
	Let us prove, that $\kappa_{r_1}$ would actually decrease for some $r_1$. We will prove it using algebraic field theory. Let's say $q_1 = 2r_1 + 1$, is a prime number. Presume $\kappa_{r_1}$ did not fall. Let $p_1, p_2$ be some numbers, such that $\frac{p_1}{q_1} < \frac{1}{q_0}, \frac{p_2}{q_1} < \frac{1}{q_0}$, unrelated to yet to be constructed sequence $p_i$ in Lemma \ref{mainfieldlemma}. Then, note that the angle $\frac{2\pi p_2}{q_1}$ is some multiple of the angle $\frac{2\pi p_1}{q_1}$ modulo $2\pi$. As such, we can express $\cos \frac{2\pi p_2}{q}$ through $\cos \frac{2\pi p_1}{q_1} $ via the formula for cosine of the natural multiple of an angle.
	Let 
	\begin{equation}
		\frac{2\pi p_2}{q_1} + 2\pi s = 
		\rho(p_1, p_2) \frac{2\pi p_1}{q_1}, \; 
		\rho(p_1, p_2) \in \mathbb{Z}, 1 \le \rho(p_1, p_2) \le q_1-1.
	\end{equation}
	One can also note that if we will consider $p_1, p_2$ as elements of $\mathbb{F}_{q_1}$, the following would be true: 
	\begin{equation}
		\rho(p_1, p_2) = p_2p_1^{-1}.
	\end{equation}
	The formula for $
	\rho(p_1, p_2)$-multiple cosine will always be a polynomial with rational coefficients. Precisely, let
	\begin{equation}
		\cos \frac{2\pi p_2}{q_1}  = P\left( \cos \frac{2\pi p_1}{q_1} \right) .
		\label{p2thrup1}
	\end{equation}
	Now, consider the matrix $\hat{A}^{r_1}(p_1, p_2)$, that is obtained from $\hat{A}^{r_1}$ by removing all rows for $\hat{A}_{p, q}$ with $q = q_1$ and $p \ne p_1, p_2$.
	Since it is a submatrix of $\hat{A}^{r_1}$, we have
	\begin{equation}
		K_{r_1} \subset \ker(\hat{A}^{r_1}(p_1, p_2)) .
	\end{equation}
	However, 
	\begin{equation}
		\dim \ker(\hat{A}^{r_1}(p_1, p_2)) \le \kappa_{r_1-1} = \kappa_{r_1} = \dim(K_{r_1}).
	\end{equation}
	So, 
	\begin{equation}
		K_{r_1} = \ker(\hat{A}^{r_1}(p_1, p_2)),
	\end{equation}
	\begin{equation}
		\dim \ker(\hat{A}^{r_1}(p_1, p_2)) = \kappa_{r_1-1}.
	\end{equation}
	That means that by adding 2 new rows and only 1 new column to $\hat{A}^{r_1-1}$, the rank of kernel did not fall. If we write down a condition on that, we will receive that some minors of the matrix of $\hat{A}^{r_1}(p_1, p_2)$ vanish.
	
	Let's describe our following steps. First, we will construct a field, containing some elements of the matrix of $\hat{A}^{r_1}(p_1, p_2)$. Out of all coefficients, only $\cos \frac{2\pi p_1}{q_1} $ will not be present in said field. Then, we will consider a ring of polynomials over the field, depending on some variable $z$. After that, we will substitute $z$ instead of $\cos \frac{2\pi p_1}{q_1}$ in the matrix and write down the described minors of the matrix. These minors will be polynomials of $z$, and will have a root at $z = \cos \frac{2\pi p_1}{q_1} $. Then, they will be divisible by the minimal polynomial of $\cos \frac{2\pi p_1}{q_1} $. We will use it to substitute other roots of the minimal polynomial instead of $z$.
	
	Let's us construct a field $F$. First of all, we will consider the field of rational numbers $\mathbb{Q}$. Next, let $W$ be the lowest common multiple of all the numbers, less than $q_1$. Then, let $w$ be the primitive root of unity of order $W$. Our 
	field F would be $\mathbb{Q}$ with added element $w$. Now, 
	\begin{equation}
		\left[ F : \mathbb{Q} \right] = \varphi(W),
	\end{equation}
	where $\varphi(W)$ is Euler's totient function. 
	
	Now let's discuss the element of such field. First of all, rational numbers are obviously present in this field. So, all the binomial coefficients are present. Also, all roots of unity of degree $W$ are present. Then, for every $q < q_1$, its roots of unity are present. This means that 
	\begin{equation}
		\cos\left( \frac{2\pi p}{q}\right)  + i\sin\left( \frac{2\pi p}{q}\right)  \in F, \; q<q_1.
	\end{equation}
	Since conjugate root is also present, we have that
	\begin{equation}
		\cos\left( \frac{2\pi p}{q}\right)  \in F, \; q<q_1.
	\end{equation}
	So, note that every row in $\hat{A}^{r_1}(p_1, p_2)$, except rows of $\hat{A}_{p_1, q_1}$ and $\hat{A}_{p_2, q_1}$, has all their elements present in $F$. Of those two, the elements of $\hat{A}_{p_1, q_1}$ row will have polynomial dependency on $\cos \frac{2\pi p_1}{q_1} $. For $\hat{A}_{p_2, q_1}$ this will also be true, after considering \eqref{p2thrup1}. Now let's write down the matrix of $\hat{A}^{r_1}(p_1, p_2)$:
	\begin{equation}
		\begin{pmatrix}
			f_{11} & f_{21} & \ldots & f_{\alpha1}\\
			f_{12} & f_{22} & \ldots & f_{\alpha2}\\
			\vdots & \vdots & \ddots & \vdots\\
			f_{1\beta} & f_{2\beta} & \ldots & f_{\alpha\beta}\\
			R_1\left( \cos\frac{2\pi p_1}{q_1} \right)  & R_2\left( \cos\frac{2\pi p_1}{q_1} \right)  & \ldots & R_{\alpha}\left( \cos\frac{2\pi p_1}{q_1} \right) \\
			R_1\left( P \left( \cos\frac{2\pi p_1}{q_1} \right) \right)  & R_2\left( P \left( \cos\frac{2\pi p_1}{q_1} \right) \right) & \ldots & R_{\alpha}\left( P \left( \cos\frac{2\pi p_1}{q_1} \right) \right)
		\end{pmatrix}.
		\label{bigmatrix}
	\end{equation}
	Here, $f_{ij}$ denote some elements of $F$, and $R_i$ -- some polynomials over $F$. The first $\beta$ rows represent functionals with $q<q_1$, the second-to-last represents $\hat{A}_{p_1, q_1}$, and the last one -- $\hat{A}_{p_2, q_1}$. Also note that $P$ is also a polynomial over $F$. Now, introduce new variable $z \in \mathbb{C}$, and put it into this matrix instead of $\cos \frac{2\pi p_1}{q_1} $:
	\begin{equation}
		\begin{pmatrix}
			f_{11} & f_{21} & \ldots & f_{\alpha1}\\
			f_{12} & f_{22} & \ldots & f_{\alpha2}\\
			\vdots & \vdots & \ddots & \vdots\\
			f_{1\beta} & f_{2\beta} & \ldots & f_{\alpha\beta}\\
			R_1\left(z \right)  & R_2\left( z \right)  & \ldots & R_{\alpha}\left( z \right) \\
			R_1\left( P \left( z \right) \right)  & R_2\left( P \left( z \right) \right) & \ldots & R_{\alpha}\left( P \left( z \right) \right)
		\end{pmatrix}.
	\end{equation}
	Now we know, that at $z = \cos \frac{2\pi p_1}{q_1} $ some minors of this matrix are zero. Since all the minors of this matrix are polynomials over $F$ from $z$, that means that these polynomials are divisible by the minimal polynomial $\tilde{\Psi}$ of $\cos \frac{2\pi p_1}{q_1}$ over $F$. We know, that the minimal polynomial of $\cos \frac{2\pi p_1}{q_1} $ over $\mathbb{Q}$ is $\Psi$. We also know, that 
	\begin{equation}
		\deg(\Psi) = \frac{\varphi(q_1)}{2} = \frac{q_1 - 1}{2}.
	\end{equation}
	The roots of $\Psi$ take the form 
	\begin{equation}
		\cos \frac{2\pi p}{q_1} , \; p = 1, \ldots ,\frac{q_1 - 1}{2}.
	\end{equation}
	Now we will prove, that $\tilde{\Psi} = \Psi$, meaning that by adding new elements to the field, we did not reduce the degree of the minimal polynomial.
	\begin{lemma}
		\begin{equation}
			\tilde{\Psi} = \Psi
		\end{equation}
		\label{lemapsi}
	\end{lemma}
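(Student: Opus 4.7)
The plan is to show that the primitive $W$-th root of unity $w$ brings nothing genuinely new to the arithmetic of $\cos(2\pi p_1/q)$, so the minimal polynomial cannot drop in degree when we pass from $\mathbb{Q}$ to $F = \mathbb{Q}(w)$. Since $\Psi(\cos(2\pi p_1/q))=0$ and $\Psi \in \mathbb{Q}[z] \subset F[z]$, the minimal polynomial $\tilde{\Psi}$ over $F$ divides $\Psi$ in $F[z]$, and both are monic. It therefore suffices to prove the degree equality $\deg \tilde\Psi = \deg \Psi = (q-1)/2$, since this forces $\tilde\Psi = \Psi$.

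The first step is to compute $F \cap \mathbb{Q}(\zeta_q)$, where $\zeta_q = e^{2\pi i / q}$. Note that $F = \mathbb{Q}(\zeta_W)$ and by the standard intersection formula for cyclotomic fields, $\mathbb{Q}(\zeta_W) \cap \mathbb{Q}(\zeta_q) = \mathbb{Q}(\zeta_{\gcd(W,q)})$. Since $q$ is prime and $W$ is the least common multiple of $1, 2, \ldots, q-1$, every prime dividing $W$ is strictly less than $q$, so $\gcd(W,q) = 1$. Hence $F \cap \mathbb{Q}(\zeta_q) = \mathbb{Q}$, and since $\mathbb{Q}(\cos(2\pi p_1/q)) \subset \mathbb{Q}(\zeta_q)$, we also get $F \cap \mathbb{Q}(\cos(2\pi p_1/q)) = \mathbb{Q}$.

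The second step converts this intersection statement into the required degree equality. The extension $\mathbb{Q}(\cos(2\pi p_1/q))/\mathbb{Q}$ is Galois — it is the maximal real subfield of the abelian extension $\mathbb{Q}(\zeta_q)/\mathbb{Q}$, namely the fixed field of complex conjugation. For a Galois extension $L/\mathbb{Q}$ and any extension $F/\mathbb{Q}$, one has the standard identity
\begin{equation}
[FL : F] = [L : F \cap L].
\end{equation}
Applying this with $L = \mathbb{Q}(\cos(2\pi p_1/q))$ gives
\begin{equation}
[F(\cos(2\pi p_1/q)) : F] = [\mathbb{Q}(\cos(2\pi p_1/q)) : \mathbb{Q}] = \frac{q-1}{2},
\end{equation}
which is precisely $\deg \tilde\Psi$, completing the argument.

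The only delicate point is the invocation of the cyclotomic intersection formula $\mathbb{Q}(\zeta_m) \cap \mathbb{Q}(\zeta_n) = \mathbb{Q}(\zeta_{\gcd(m,n)})$; this is the main technical input. Everything else — the degree formula for Galois compositums, the fact that $\mathbb{Q}(\cos(2\pi/q))$ is the real cyclotomic field of degree $(q-1)/2$ when $q$ is an odd prime, and the divisibility $\tilde\Psi \mid \Psi$ in $F[z]$ — is standard. Once these pieces are assembled, the equality $\tilde\Psi = \Psi$ follows immediately because a monic polynomial cannot properly divide another monic polynomial of the same degree.
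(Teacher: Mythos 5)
Your proof is correct, and it takes a genuinely different route from the paper's. The paper argues by contradiction: it assumes $\deg\tilde\Psi = d < (q-1)/2$, explicitly builds intermediate fields $F_1 = F(\cos(2\pi p_1/q))$ and $F_2 = F_1(i\sin(2\pi p_1/q))$, bounds $[F_2:\mathbb{Q}]$ above by $2d\,\varphi(W)$ via the tower law, then bounds it below by $\varphi(qW) = (q-1)\varphi(W)$ because $F_2$ contains a primitive $qW$-th root of unity, and extracts a contradiction. You instead argue directly: $\tilde\Psi \mid \Psi$ in $F[z]$, so it suffices to match degrees; the cyclotomic intersection formula $\mathbb{Q}(\zeta_m)\cap\mathbb{Q}(\zeta_n) = \mathbb{Q}(\zeta_{\gcd(m,n)})$ together with $\gcd(W,q)=1$ gives $F\cap\mathbb{Q}(\cos(2\pi p_1/q)) = \mathbb{Q}$, and the Galois compositum degree formula $[FL:F] = [L:F\cap L]$ then delivers $\deg\tilde\Psi = (q-1)/2$ immediately. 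The underlying number-theoretic fact driving both arguments is the same (coprimality of $q$ and $W$ forces the $q$-th cyclotomic field to be linearly disjoint from $F$ over $\mathbb{Q}$), but your version invokes two standard black-box results and avoids both the contradiction and the auxiliary extension $F_2$, whereas the paper essentially re-derives the relevant linear disjointness by hand through $\varphi(qW) = \varphi(q)\varphi(W)$. Your version is shorter and arguably more transparent about where the hypothesis on $W$ is used; the paper's is more self-contained for a reader who does not have the cyclotomic intersection formula at hand.
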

	\begin{proof}
		Let's assume it is not true. Then 
		\begin{equation}
			d = \deg(\tilde{\Psi}) < \deg(\Psi) = \frac{q_1 - 1}{2}.
		\end{equation}
		Consider $F_1$ by adding $\cos\frac{2\pi p_1}{q_1}$ to $F$. 
		\begin{equation}
			\left[ F_1 : F \right] = d .
		\end{equation}
		Now, consider $F_2$ by adding $i\sin \frac{2\pi p_1}{q_1} $ to $F_1$ as a solution to $z^2 = \cos^2\frac{2\pi p_1}{q_1}- 1$, if it is not already there. Then, 
		\begin{equation}
			\left[ F_2 : F_1 \right] \le 2. 
		\end{equation}
		So, we have that 
		\begin{equation}
			\left[ F_2 : \mathbb{Q} \right] = 	\left[ F_2 : F_1 \right]	\left[ F_1 : F \right]	\left[ F : \mathbb{Q} \right] \le 2d\varphi(W).
		\end{equation}
		But since $\cos \frac{2\pi p_1}{q_1}  + i\sin\frac{2\pi p_1}{q_1}$ is present in $F_2$, all the other roots of unity of degree $q_1$ are present there. Since roots of unity of degree $q_1$ and $W$ are present in $F_2$, the roots of unity of degree $q_1W$ should be present there, since $q_1$ and $W$ are co-prime. Since the primitive roots of unity of degree $q_1W$ are present there, the expansion of $F_2$ over $\mathbb{Q}$ should at least have the degree of their minimal polynomial. Then,
		\begin{equation}
			\left[ F_2 : \mathbb{Q} \right] \ge \varphi(q_1W).
		\end{equation}
		So, 
		\begin{equation}
			(q_1-1)\varphi(W) = \varphi(qW) \le 2d\varphi(W).
		\end{equation}
		that immediately leads to contradiction. So, $\tilde{\Psi} = \Psi$.
	\end{proof}
	\subsubsection{Changing roots}
	So, all the described minors are divisible by $\Psi(z)$. Then, they have all the roots of $\Psi(z)$ as their roots. In particular, we can substitute $z = \cos \frac{2\pi}{q_1} $. This means, that the following matrix has the same dependencies, as the matrix of $\hat{A}^{r_1}(p_1, p_2)$:
	\begin{equation}
		\begin{pmatrix}
			f_{11} & f_{21} & \ldots & f_{\alpha1}\\
			f_{12} & f_{22} & \ldots & f_{\alpha2}\\
			\vdots & \vdots & \ddots & \vdots\\
			f_{1\beta} & f_{2\beta} & \ldots & f_{\alpha\beta}\\
			R_1\left( \cos \frac{2\pi}{q_1} \right)  & R_2\left( \cos \frac{2\pi}{q_1} \right)  & \ldots & R_{\alpha}\left( \cos \frac{2\pi}{q_1}\right)  \\
			R_1\left( P \left( \cos \frac{2\pi}{q_1}  \right) \right)  & R_2\left( P \left( \cos \frac{2\pi}{q_1}  \right) \right) & \ldots & R_{\alpha}\left( P \left( \cos \frac{2\pi }{q_1} \right) \right)
		\end{pmatrix}.
		\label{bigmatrix2}
	\end{equation}
	Now, we know that 
	\begin{equation}
		P \left( \cos\frac{2\pi }{q_1} \right) = \cos\left( \frac{2r(p_1, p_2)\pi }{q_1}\right) .
	\end{equation}
	So, \eqref{bigmatrix2} actually has the similar structure to $\hat{A}^{r_1}(p_1, p_2)$, bu instead of $\hat{A}_{p_1, q_1}$ and $\hat{A}_{p_2, q_1}$ functional, we get a functional $\hat{A}_{1, q_1}$ and a row for "functional" $\hat{A}_{\rho(p_1, p_2), q_1}$. Note that we didn't require to preserve $\rho(p_1, p_2)/q_1$ caustic. 
	
	It is natural to denote this matrix as $\hat{A}^{r_1}(1, \rho(p_1, p_2))$. Then,
	\begin{equation}
		\dim \ker (\hat{A}^{r_1}(p_1, p_2)) = \kappa_{r_1-1} \Rightarrow \dim \ker (\hat{A}^{r_1}(1, \rho(p_1,p_2))) = \kappa_{r_1-1}.
	\end{equation}
	This means that
	\begin{equation}
		\ker (\hat{A}^{r_1}(1, \rho(p_1,p_2))) = \ker (\hat{A}^{r_1}(1)) = \ker(\hat{A}^{r_1}) = K_{r_1}
	\end{equation}
	for a logical definition of $\hat{A}^{r_1}(1)$.
	
	Let's understand what we did here. We had two rows, that had $K_{r_1}$ in a kernel, namely $\hat{A}_{p_1, q_1}$ and $\hat{A}_{p_2, q_1}$.
	
	We did some operations and deduced, that a row for "functional" $\hat{A}_{\rho(p_1, p_2), q_1}$ also has $K_{r_1}$ in a kernel.
	
	Let us now generalize this process. Let $G$ be the set of all $p \in \mathbb{F}_{q_1}^*$, such that a row for "functional" $\hat{A}_{p,q_1}$ has $K_{r_1}$ in a kernel. We have proven that if $p_1 \in G, p_2 \in G$, then $\rho(p_1, p_2) = p_2p_1^{-1} \in G$. Note that since a functional $\hat{A}_{1,q_1}$ is available to us, that means that $1 \in G$. This immediately proves that $G$ is a subgroup of $\mathbb{F}_{q_1}^*$.
	
	Now note that if $\frac{p}{q_1} < \frac{1}{q_0}$, then $p \in G$(of course here $p$ is a natural number). Also note that $G$ is symmetrical by multiplying by $-1$, since the cosine is an even function.
	
	Now suppose, that for given $q_1$, these demands force $G$ to be equal to the whole group. We will discuss, for which primes this is true, later, but now notice that this condition depends only on prime number itself and on $q_0$.
	
	If $G$ is the whole group, we will show that $K_{r_1} = \left\lbrace 0 \right\rbrace $.
	\subsubsection{Finishing steps}
	Let's count the columns. We have $k = 1, \ldots, k = r_1$. We have $r_1$ of them. Now consider rows $\hat{A}_{p, q_1}$ for $p = 1, \ldots, r_1$, and the matrix consisting only of them. Since the system has $K_{r_1}$ inside its kernel, and if $K_{r_1}$ is not zero, then the determinant of its matrix is zero. We will show that it cannot be this way. Write down the equations more precisely (let's say first column corresponds to $k = r_1$, last -- to $k = 1$)
	\begin{equation}
		\begin{pmatrix}
			\left( \frac{\cos\frac{2\pi p}{q_1} + 1}{2} \right)^{r_1-1} \binom{2r_1}{0},  & \left( \frac{\cos\frac{2\pi p}{q_1} + 1}{2} \right)^{r_1 - 2} \binom{2r_1 - 1}{1},   & \ldots & \left( \frac{\cos\frac{2\pi p}{q_1} + 1}{2} \right)^{0} \binom{r_1}{r_1} 
		\end{pmatrix}.
	\end{equation}
	Now the binomial coefficients do not depend on $p$, so elements in the same column have the same coefficients. Since they are non-zero, we can multiply whole columns by their inverses and cancel them. Determinant will remain zero. Consider the rows of the new matrix:
	\begin{equation}
		\begin{pmatrix}
			\left( \frac{\cos\frac{2\pi p}{q_1} + 1}{2} \right)^{r_1-1},  & \left( \frac{\cos\frac{2\pi p}{q_1} + 1}{2} \right)^{r_1-2},   & \ldots & \left( \frac{\cos\frac{2\pi p}{q_1} + 1}{2} \right)^{0}
		\end{pmatrix}.
	\end{equation}
	Notice that this matrix is just the rotated Vandermonde matrix, and its determinant is nonzero, since 
	\begin{equation}
		\cos\frac{2\pi p_1}{q_1} \ne \cos\frac{2\pi p_2}{q_1}; \; p_1, p_2 = 1, 2, \ldots ,r_1; \; p_1 \ne p_2 .
	\end{equation}
	So, if $q_1$ is $q_0$-good, then the dimension of the kernel should fall at least by one. Since there are infinite number of those primes, the kernel of the system will become zero at some $r_1$. Note that $r_1$ depends only on $q_0$, and does not depend on $e$ or the deformation $\mu$.
	
	We note that $a^+$ and $a^{-}$ are studied the same way.
	
	\subsection{Selection of Primes}
	Previously, we described a following problem. Let $q$ be a prime, and let $G$ be the minimal subgroup of $\mathbb{F}_{q_1}^*$, that contains $1, \ldots, \left[ \frac{q_1}{q_0} \right] $ and is symmetrical under negation. For what $q_1$ $G = \mathbb{F}_{q_1}^*$? If some number from the starting ones is a primitive root modulo $q$, then the whole group is generated by its powers. Then, all the $q_0$-good numbers introduced in Definition \ref{q0agood} satisfy this relation. Since there are infinite amount of them, the rank should fall to zero.
	
	\subsection{Even nodes}
	
	\subsubsection{Introduction}
	The algorithm concerning even indices would be similar to the odd one. We will study similar matrices to the odd case. Then, it is possible to prove that the dimension of the kernel does not increase. Then one would consider caustics of rotation number $\frac{p}{2q_1}$ for some prime $q_1$ and odd $p$. Then, we will prove that the dimension of the kernel decreases, when $q_1$ is $q_0$-good.  
	
	\subsubsection{New rows}
	In this section it is important to understand a difference with an odd case. We said earlier that the rows of the matrix will correspond to the preserved caustic, like $A_{p, q, j}$ corresponds to the caustic $p/q$. Here, it will be important to consider rows with $A_{2p, 2q, j}$. It is a bit unnatural, but it is just another condition of preservation of $p/q$ caustic, because to preserve a caustic one needs to kill not only the $q$ harmonic, but also $2q$ harmonic and so on in the action angle coordinates. $A_{2p, 2q, j}$ corresponds to $2q$ harmonic for $p/q$ caustic, as seen in \eqref{idef}. The same formulas apply for them as well and we can use them when $(2p)/(2q)<1/q_0$, so these rows are extremely similar to normal ones.
	\subsubsection{Changing the matrix}
	We will make similar adjustments before using the field theory. We once again introduce new indexes, since we are studying an even case. We get $j = 2k$ and $2q = 2r$. We consider $k\ge 2$ and $r > r_{0, even} = \frac{q_0}{2}$.   
	
	We make the following change.
	
	\begin{equation}
		\tilde{A}_{p, r, k} =  \frac{2^{4r - 3k-1} \cos^{2r-2}\frac{\pi p}{2q}}{e^{2(r - k)}} A_{p, r, k}
		\label{scaleeven}
	\end{equation}
	
	Then, the following estimates hold:
	
	\begin{equation}
		\tilde{A}_{p, r, k} = \binom{r+k - 1}{r-k}   \left( \cos \frac{\pi p}{r} + 1 \right)^{k-1}     + O(e^{2}), \; \; e \rightarrow 0, \; k \le r
	\end{equation}
	\begin{equation}
		\tilde{A}_{p, r, k} = O(e^{2}), \; \; e \rightarrow 0, \; k > r
	\end{equation}
	
	Then, we get the introduce the limit as $e \rightarrow 0$.
	
	\begin{equation}
		\hat{A}_{p, r, k} = \binom{r+k - 1}{r-k}   \left( \cos \frac{\pi p}{r} + 1 \right)^{k-1},\; k \le r;  \; \; \; \hat{A}_{p, r, k} = 0,\; k > r.
		\label{matrixel2}
	\end{equation}
	
	Now we will consider the procedure, similar to the odd case. We start with $r_1 = r_{0, even}+1$ and we will increase it by $1$ step-by-step. We know that $\kappa_{r_1}$ would not increase (since we have $(1, 2r_1)$ condition). So we want to prove the matrix $\hat{A}^{r_1}$ to be full rank for some $r_1$. 
	
	Similarly to the odd case, let's prove, that if $r_1 = q$ is a prime number with some properties, then the rank falls at least by one.
	\subsubsection{Even case field theory}
	In the odd case we were only considering $2$ rows with $q = q_1$ each time and doing some field theory with it. In the even case, the situation is very similar, but a bit more complex. We will once again consider only $2$ rows for $q = 2q_1$ and writing down the minimal polynomials and changing roots. The major difference with the case of odd $q$ is that even for prime $q_1$ all the cosines do not share the same minimal polynomial. Specifically, cosines with even $p$ (coming from $(p/2)/{q_1}$ caustic) have the same polynomial, while cosines with odd $p$ (coming from $p/{2q_1}$), have another one. Because of that, our task breaks up into $2$ parts. The first is to get all the residues for odd $p$ using field theory, second - the same for even $p$. To succeed in both tasks, we would need $2$ conditions on $q_1$, so we would need to join them together. To accomplish them, we will be taking $p_1$ and $p_2$ of the same parity.
	
	Let us discuss the algebraic structure. For even $p$ we have the following cosines:
	
	\begin{equation}
		\cos\frac{2\pi s}{q_1},
	\end{equation}

	when $p = 2s$. These cosines are the same as the ones studied for the odd nodes and their minimal polynomial is $\Psi$. When $p_1$ and $p_2$ are both even, we can introduce $s_1$ and $s_2$ and consider them as elements of $\mathbb{F}_{q_1}^*$. Since other $q$ in the matrix are not divisible by $q_1$, we get that $\tilde{\Psi} = \Psi$ still, and we can do the same things as for odd nodes, specifically go from $s_1, s_2$ to $1, \rho(s_1, s_2)$. 
	
	We can then construct a subgroup $G$ here. It would also be symmetric around $0$. It will also have $s = 1, \ldots , s = \left[ \frac{q_1}{q_0}\right] $. So, to guarantee that this $G$ is the whole $\mathbb{F}_{q_1}^*$, we demand for $q_1$ to be $q_0$-good.
	
	Now we consider the case that $p$ is odd. First of all, we need to find a minimal polynomial for $\cos \frac{\pi p}{q_1}$ in this case. We can do a trick:
	
	\begin{equation}
		\cos \frac{\pi p}{q_1} = - \cos \frac{\pi (q_1+p)}{q_1}.
	\end{equation}
	
	Moreover, in this case $q_1+p$ is even, we naturally denote it as $2s$. From this we can see that if one removes the minus sign, the same $\Psi$ is the minimal polynomial again. Now we also consider $s \in \mathbb{F}_{q_1}^*$. We can try to do the same thing by going from $(s_1, s_2)$ to $(1, \rho(s_1, s_2))$, but the main problem is that caustic with $s = 1$ is not necessarily preserved (in fact it is $\frac{q_1-2}{2q_1}$ caustic), so the respective functional may not be available. However, when $p = 1$, $s$ is equal to $\frac{q_1+1}{2}$, and this should be preserved. Hence, we can change $1$ to $\frac{q_1+1}{2}$ in our proof and go
	
	\begin{equation}
		(s_1, s_2) \rightarrow \left(\frac{q_1+1}{2}, \rho(s_1, s_2)\frac{q_1+1}{2}  \right)
	\end{equation} 
	
	$G$ would still be symmetrical by negation. However, instead of $1, \ldots , \left[ \frac{q_1}{q_0} \right] $ inside of $G$ by default, we would get $\frac{q_1 - 1}{2} + 1, \frac{q_1 - 1}{2} + 2, \ldots , \frac{q_1 - 1}{2} + \left[ \frac{q_1}{q_0} \right] $, since we rotated everything by $\pi$.
	
	So, $G$ has similar structure to the subgroup, and actually becomes a subgroup, if one multiplies it by $2$. Then $G$ would include $1, 3, \ldots, 2\left[ \frac{q_1}{q_0} \right] - 1$. Since $q$ is already a $q_0$-good number, $3$, $5$ and $7$ are present in the starting set and one of them is a primitive root, so the subgroup is the whole group. This is also the reason we don't use $2$ instead of $7$ in a definition of $q_0$-good numbers.
	
	So, if $q$ is $q_0$-good, we will be able to add all the "functionals" $\hat{A}_{p,2q_1}$ for all $p$ from $1$ to $q_1-1$, both even and odd. We will now once again construct a Vandermonde matrix.
	
	Assume that the rank did not fall. This would mean that the matrix will all those $\hat{A}_{p,2q_1}$ still is not full rank. Let us only consider  $\hat{A}_{p,2q_1}$. Then, we have a matrix with $q_1-1$ rows and $q_1-1$ columns for $k = 2, \ldots , k = q_1$. We get a contradiction, since we once again have a Vandermonde matrix. So, when $q_1$ is $q_0$-good, we get a rank decrease. 
	
	\subsubsection{Some functionals are dependent}
	
	One may assume that the condition of being full rank is rather expected, since our matrix has a lot of rows. There are, however, some surprising connections between them. For example, one can find a deformation $\mu$ such that all the conditions $A_{p, q}$ for odd $p$ and $q \equiv 2\mod 4$ are zero in the main order in $e$ (meaning $\hat{A}_{p, q}$ are linearly dependent). So, some rows of $\hat{A}$ are linearly dependent on each other. The cosine harmonics of associated deformation are given by
	
	\begin{equation}
		a_{2k+4}^+ = (-1)^kC\frac{k+1}{k+2}\left(\frac{e}{2}\right)^{2k}.
	\end{equation} 
	This relation can be obtained using alternative formula for Chebyshev polynomials of first kind $T_k(z)$. However, this doesn't mean that $p/q$ caustics can be all preserved. There are other functionals for these caustics, like $A_{2p, 2q}$ that are not nullified by this deformation, and even the latter are only up to the linear order of $\mu$ and only in the main term over $e$.
	\section{Analytic dependency on the eccentricity}
	
	In the previous sections we studied the caustic preservation for small eccentricities. Now we study the case of non-small $e$. The main idea is to prove that the dependence of our objects on $e$ is holomorphic in some domain. That would help us to use the case of small eccentricities to obtain some information (rigidity) for other $e$.
	
	\subsection{Elliptic functions and related objects}
	
	In this section those objects will be the caustic parameters $\omega, \lambda, k, \phi$. Specifically, since we study caustic with a fixed rational rotation number, we want $\omega$ to be fixed and study $\lambda, k$ and $\phi$ as the functions from $e$ and $\omega$. Unfortunately, there seem to be no formula that derives $\lambda$ from $\omega$, but we can find $\omega$ as a function of $\lambda$:
	
	\begin{equation}
		\omega(\lambda, e) = \frac{F(\phi(\lambda, e),k(\lambda, e))}{2K(k(\lambda, e))}
		\label{omegale}
	\end{equation}

	So, the only way is to study $\lambda$ as an implicit function. This forces us to first study $\omega(\lambda, e)$ when they are both complex (in some thin neighborhood of the real line). We also want to bound the considered domain for real $e$ and $\lambda$ away from $2$ degenerate cases. The first is when $e=1$, while the second has $e^2+\lambda^2 = 1$, that corresponds to the family of orbits going through foci and to a segment "caustic" with $\omega = 1/2$.
	
	So, we fix a constant $e_{\max} < 1$ and $\omega_{\max} < 1/2$. We will not be considering ellipses with larger eccentricities (though we can always increase $e_{\max}$) and caustics with larger rotation numbers. We also should mention that for the dynamical result we only need $\omega_{\max} = 1/{q_0}$, but we may require larger $\omega$ in the spectral case to study non-incidence.
	
	To define studied complex domains, we introduce $2$ small parameters: $d$ and the width of a complex strip $\delta$. We demand the following:
	
	\begin{equation}
		\delta \ll d \ll 1 - e_{\max} \ll 1.
	\end{equation}
	
	\begin{figure}
		\includegraphics[width=10cm]{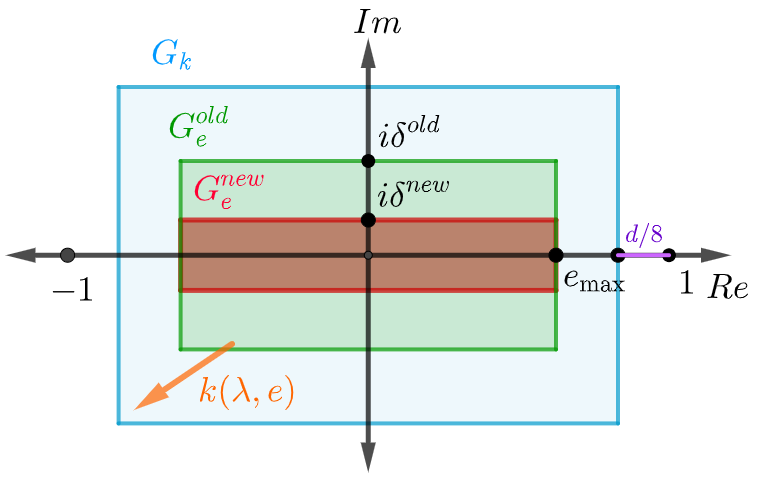}
		\centering
		\caption{Very narrow complex strips for eccentricities. Strip for $k$ is much closer to $1$ than that of $e$. When we use implicit function theorem, we decrease $\delta$.}
	\end{figure}
	
	The main strip we introduce is $G_e$ -- the domain of eccentricity:
	
	\begin{equation}
		G_e = \left\{ e \in \mathbb{C}: |\reale e| < 1 - e_{\max}, |\imag e| < \delta \right\}.
	\end{equation} 
	
	We introduce several a couple of auxiliary thin complex strips: $U$ -- the domain of $\lambda$ and $e$, where \eqref{omegale} is defined and $G_k$ -- the strip, containing the image of $U$ under $k(\lambda, e)$:
	
	\begin{equation}
		U = \left\{ (\lambda, e) \in \mathbb{C}^2 : e \in G_e, |\reale \lambda| < \sqrt{1 - (\reale e)^2 - d}, |\imag \lambda| < \delta\right\}
	\end{equation}

	and 
	
	\begin{equation}
		G_k = \left\{  k \in \mathbb{C}: |\reale k| < 1 - d/8, |\imag k| < C\delta \right\}
	\end{equation}

	for some constant $C$.
	
	\begin{figure}
		\includegraphics[width=9cm]{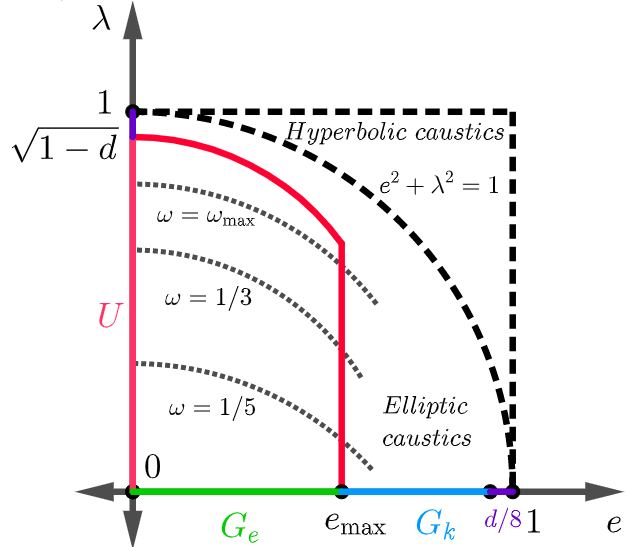}
		\centering
		\caption{Relation between $\lambda$ and $e$ in the first quadrant. $U$ is away from $e^2 + \lambda^2 = 1$ and $e = 1$. $\lambda(e, \omega)$ are drawn for various values of $\omega$.}
	\end{figure}
	
	Before we venture into the complex analysis, we should deal with real values $\lambda$ and $e$. On one hand, $\lambda$ should be bounded away from $\lambda^2 + e^2 = 1$, on the other it should realize all the needed values of $\omega$. The function $\omega(\lambda, e)$ is continuous and strictly increasing in $\lambda$, so we only need the following lemma:

	\begin{lemma}
		There exists a constant $d$ depending only on $e_{\max}$ and $\omega_{\max}$, such that
		
		\begin{equation}
			 \exists \lambda \in \left(0, \sqrt{1 - e^2 - d}\right): \omega(\lambda, e) > \omega_{\max}.
			\label{lambdaex}
		\end{equation}
		
		\label{lexi}
	\end{lemma}
	
	\begin{proof}
		
		We have the following relations:
		
		\begin{equation}
			\omega(\lambda, e) = \frac{F(\phi, k)}{2K(k)}, \; \;\; k = \frac{e}{\sqrt{1 - \lambda^2}}, \; \; \; \phi(\lambda, e) = \arcsin \left( \frac{\lambda}{\sqrt{1 - e^2}} \right)
			\label{omph}
		\end{equation}
		
		From \cite{abse} we know that
		
		\begin{equation}
			\sqrt{1-k^2}\tan \phi \tan \psi = 1 \Rightarrow F(\phi, k) + F(\psi, k) = K(k).
		\end{equation}
		
		In our case, we get that
		
		\begin{equation}
			\tan \psi = \frac{\sqrt{1 - \frac{\lambda^2}{1-e^2}}}{\sqrt{1 - \frac{e^2}{1 - \lambda^2}}\frac{\lambda}{\sqrt{1-e^2}}} \Rightarrow \cos \psi = \lambda
		\end{equation}
		
		Then, 
		\begin{align}
			\omega(\lambda, e) = \frac{1}{2} - \frac{F(\psi, k)}{2K(k)} \ge \frac{1}{2} - \frac{\psi}{2\sqrt{1-e^2} K(k)} \ge \frac{1}{2} - \frac{C\sqrt{1 - \lambda}}{K(k)}.
		\end{align}
		
		Now, we can say that $K(k)$ has a logarithmic singularity as $k \rightarrow 1$ (meaning $e^2+\lambda^2 \rightarrow 1$):
		
		\begin{equation}
			K(k) \ge C \left( \log\frac{1}{1 - k^2} + 1\right)  = C \left( \log\frac{1 - \lambda^2}{1 - \lambda^2 - e^2}+ 1\right) 
		\end{equation}
		
		Hence,
		
		\begin{equation}
			\omega(\lambda, e) \ge \frac{1}{2} - C\frac{\sqrt{1 - \lambda}}{\log(1 - \lambda^2) - \log(1 - \lambda^2 - e^2) + 1}
		\end{equation}
		
		The denominator is at least one, so when $\lambda > c(e_{\max}, \omega_{\max})$ we will get the bound $\omega(e, \lambda) > \omega_{\max}$ just by looking in the numerator, provided the second logarithm exists. If not, the first logarithm is bounded and the needed is true for $1 - \lambda^2 - e^2 < d(e_{\max}, \omega_{\max})$. The first case can be integrated inside the second one by decreasing $d$.
		
	\end{proof}
	
	Now we can say that the dependency of $\omega$ on $e$ and $\lambda$ is holomorphic in $U$, if $\delta$ is small enough.
	
	\begin{lemma}
		$\exists \delta > 0$, such that $\omega(\lambda, e)$, defined by \eqref{omegale} is a holomorphic function of $(\lambda, e) \in U$.
		
		\label{omegalemma}
		
	\end{lemma}
	
	To prove this lemma, we prove the analycity of all the simpler functions in \eqref{omegale} step by step. We start with the function $k(\lambda, e)$ and we choose $\sqrt{x}:\mathbb{C} \backslash [-\infty, 0] \rightarrow \mathbb{C}$ to be holomorphic.
	
	\begin{lemma}
		
		For small enough $\delta$ the function $k(\lambda, e)$ is holomorphic in $U$, mapping into $G_k$.
		
	\end{lemma}
	
	\begin{proof}
		
		The only noteworthy part is bounding the real part of $k$:
		
		\begin{equation}
			\begin{aligned}
				|\reale k| \le |k| = \frac{|e|}{|\sqrt{1 - \lambda^2}|} \le \frac{|e|}{\sqrt{1 - |\lambda|^2}} \le  \frac{|e|}{\sqrt{1 - (1 - (\reale e)^2 - d) - \delta^2}} \le \\
				\le  \frac{|e|}{\sqrt{|e|^2 + d/2}}
				 \le \frac{1}{\sqrt{1+d/2}} \le \sqrt{1 - d/4} < 1 - d/8
			\end{aligned}
		\end{equation}
	\end{proof}
	
	Now, we move on to $\phi(\lambda, e)$. It involves the inverse sine, do we specify that we study it on the following set: 
	
	\begin{equation}
		\arcsin(z):(-1, 1)\times \mathbb{R} \rightarrow\mathbb{C}.
	\end{equation}

	Then, the function $\phi(\lambda, e)$ is well defined on $U$, holomorphic and it maps into $|\reale \phi| < \frac{\pi}{2} - C\delta$, $|\imag \phi| <C\delta$. To prove the Lemma \ref{omegalemma}, we are only left with elliptic integrals of the first kind $F$ and $K$ with modulus $k$. We will also use these integrals for other $\varphi \ne \phi$, so we propose a general lemma:
	
	\begin{lemma}
		For any $d$, $\exists \delta> 0$, such that 
		
		\begin{equation}
			F(\varphi, k) : (\mathbb{R}\times (-iC\delta, iC\delta)) \times G_k \rightarrow \mathbb{C}
		\end{equation}
		
		is holomorphic in $(\varphi, k)$ and produces needed values for positives.
		Moreover, $\delta$ can be chosen to be small enough, so that
		
		\begin{equation}
			K(k) = F\left( \frac{\pi}{2}, k\right) 
		\end{equation}
		
		is a holomorphic function in $G_k$ and its real part is greater than $0$.
		\label{fklemma}
	\end{lemma}
	
	The proof of this lemma is pretty straightforward.

	We also note that the same happens with elliptic integrals of the second kind. We have proven Lemma \ref{omegalemma}.

	\subsection{Implicit function $\lambda$ of $\omega$}
	
	 Now we want to find the inverse function, since generally we know the needed rotation number and eccentricity and we have to express the parameter $\lambda$ to study the caustic and its related objects. We need to show that the function $\lambda(e, \omega)$ exists and is holomorphic.
	 
	 We will be decreasing $\delta$ to invert a function. But we want for $U$ to remain the same set.
	
	\begin{lemma}
		For every $e_{\max} < 1$ and $\omega_{\max} < 1/2$, there exists $\delta> 0$ so that the function $\lambda(e, \omega)$ is holomorphic on $e \in G_e$, $|\reale \omega| < \omega_{\max}$, $|\imag \omega| < \delta$ and
		
		\begin{equation}
		 (\lambda(e, \omega), e) \in U, \; \; \omega(\lambda(e, \omega), e) = \omega.
		\end{equation}
		
		Moreover, this function produces needed values for positives.
		\label{lambomeg}
	\end{lemma}
	
	We'll do this, using implicit function theorem. Precisely, consider
	
	\begin{equation}
		f(\lambda, e, \omega) = \omega(\lambda, e) - \omega,
	\end{equation}
	
	where $(\lambda, e) \in U$, $\omega \in \mathbb{C}$. Assume
	
	\begin{equation}
		f(\lambda_0, e_0, \omega_0) = 0
	\end{equation}
	
	for some $\lambda_0, e_0, \omega_0 \in \mathbb{R}$. Lets prove that $f_\lambda$ is not zero at this point: 
	
	\begin{lemma}
		Assume $\lambda_0, e_0, \omega_0 \in \mathbb{R}$, $(\lambda_0, e_0) \in U$ and $f(\lambda_0, e_0, \omega_0) = 0$. Then,
		
		\begin{equation}
			f_\lambda(\lambda_0, e_0, \omega_0) = \omega_\lambda(\lambda_0, e_0) > 0.
		\end{equation}
		
		\label{impllam}
		
	\end{lemma}
	
	\begin{proof}

		If $\lambda_0 = 0$, the only non-quadratic dependency on $\lambda$ in $f$ will be in $\phi(\lambda, e)$. Its not hard to show that this will generate a non-zero derivative, since $F$ and $\arcsin$ have non-zero derivatives at zero under our conditions. If $e_0 = 0$, then $\omega(\lambda, e) = \frac{\arcsin(\lambda)}{\pi}$ and it has non-zero derivative. Since $f$ is odd over $\lambda$ and even over $e$, we are only left with the case $\lambda_0>0$, $e_0>0$, that represents a standard ellipse. 
		
		Since in a standard ellipse this function is strictly increasing over $\lambda$, the derivative is non-negative. If we assume that the derivative is zero, it will lead to contradictions in action-angle coordinates for an ellipse. Specifically, since the rotation function $\alpha(I)$ has non-zero derivative for $I>0$, our degeneracy will mean that we map the strip of width $\varepsilon$ in arc length coordinates into the strip of width $\varepsilon^3$ in action angle coordinates, while preserving the area. It is impossible, so the derivative should be positive.    
		
	\end{proof}
	
	Then, in the neighborhood of $(\lambda_0, e_0, \omega_0)$ the implicit function exists. We want this neighborhood to lie in $U \times \mathbb{C}$. But first, when $e_0 \in \left( -e_{\max}, e_{\max} \right)$ and $\omega_0 \in \left( -\omega_{\max}, \omega_{\max} \right) $, we need to find a real $\lambda_0$ in our domain to apply the implicit function theorem. Note that since the derivative is positive, we can find at most one real $\lambda$ in the domain. Of course, we also need to prove such $\lambda$ even exists. That is why we have proven Lemma \ref{lexi}. It shows that for a fixed $e_0$ some $\lambda$ in our domain satisfies $\omega(\lambda, e_0) > \omega_0$. If we take $-\lambda$ it would be less than $\omega_0$. So, the needed $\lambda_0$ exists somewhere in $(-\lambda, \lambda)$ and it lies in our domain.

	So, we have proven that for $e_0 \in \left( -e_{\max}, e_{\max} \right)$ and $\omega_0 \in \left( -\omega_{\max}, \omega_{\max} \right) $ that there exists only one $\lambda_0 \in \mathbb{R}$, $  (\lambda_0, e_0)\in U$, $ \omega(\lambda_0, e_0) = \omega_0$. Hence, we can denote it by $\lambda(e_0, \omega_0)$. Using compactness arguments we unite these local implicit functions and prove Lemma \ref{lambomeg}.
	
	So, the function
	
	\begin{equation}
		k_\omega = k(e, \omega) = k(\lambda(e, \omega), e)
	\end{equation}
	
	is defined and analytical. Particularly, 
	
	\begin{equation}
		k_{p/q}(e) = k_{p/q}
	\end{equation}
	
	exist for $p/q \in (0, \omega_{\max})$ and are holomorphic in $G_e$.
	
	We can also make sure so that $G_e \subset G_k$ and when $e \in G_e$ all the elliptic integrals are defined and the properties in Lemma \ref{fklemma} apply for $e$.
	
	\subsection{Jacobi amplitude}
	
	Now we are only left with analysis of the Jacobi amplitude function for $e$ in $G_e$ and $\theta$ in $\mathbb{R}$.
	
	\begin{lemma}
		We can decrease $\delta$ in such a way, that
		
		\begin{equation}
			\varphi(\theta, e) = am\left( \frac{4K(e)}{2\pi}\theta, e\right)
		\end{equation}
		
		is a holomorphic function of $(\theta, e)$ when $e \in G_e$ and $|\imag \theta| < \delta$. Moreover, $\varphi(\theta, e)$ can be used as amplitude in Lemma \ref{fklemma}.
		\label{jacobi}
	\end{lemma}
	
	The proof is straightforward. 
	
	\subsection{Preservation conditions}
	Now we study the analycity of preservation conditions for various caustics. These conditions are functions on a boundary of an ellipse, to which the deformation must be orthogonal to. In the action-angle coordinates, these functions are just harmonics, but we study them in Lazutkin parametrization $\vartheta$. So, we also need to include the Jacobian for changing coordinates from $\vartheta$ to $\theta_{p/q}$ inside of them. We add them because we are studiying these functions as functionals on the space of deformations, see Proposition \ref{smallharm}.
	
	Now we combine our functions together to form a family of functions of $\vartheta$ that depend on $e$ as a parameter. They are essentially $A_{p, q}^+$ and $A_{p, q}^-$, but written in Lazutkin coordinates, instead of elliptic. Specifically, one can introduce
	
	\begin{equation}
		c_{p, q}(\vartheta) = \frac{K(e)}{K(k_{p/q})}\frac{\sqrt{1 - e^2\sin^2\varphi(\vartheta)}}{\sqrt{1 - k_{p/q}^2\sin^2\varphi(\vartheta)}}\cos\left( q \theta_{p/q}(\vartheta)\right) 
		\label{cdef}
	\end{equation}
	
	and
	
	\begin{equation}
		s_{p, q}(\vartheta) = \frac{K(e)}{K(k_{p/q})}\frac{\sqrt{1 - e^2\sin^2\varphi(\vartheta)}}{\sqrt{1 - k_{p/q}^2\sin^2\varphi(\vartheta)}}\sin\left( q \theta_{p/q}(\vartheta)\right) 
	\end{equation}
	
	for $|\imag \vartheta| < \delta$, $e \in G_e$, $0 < p/q < \omega_{\max}$. Here,
	
	\begin{equation}
		\varphi(\vartheta) = am\left( \frac{4K(e)}{2\pi}\vartheta, e\right), \;\; \theta_{p/q}(\vartheta) = \frac{2\pi}{4K(k_{p/q})}F(\varphi(\vartheta), k_{p/q}).
	\end{equation}
	
	For a fixed $p, q$ and $\vartheta$ these functions are defined and holomorphic due to the previous lemmas. 
	
	Note that in $c_{p,q}$ or $s_{p,q}$ the values $p$ and $q$ are not necessarily co-prime. In our further discussions (and in the even nodes section) it will be important to study functions of type $c_{2p, 2q}$ and $s_{2p,2q}$, as we do with $A_{p, q}^\pm$.  
	
	We also introduce $5$ elliptic functions:
	
	\begin{equation}
		h_i(\vartheta) = \frac{4K(e)}{2\pi} \sqrt{1 - e^2\sin^2\varphi(\vartheta)}\frac{e_i(\varphi(\vartheta))}{1-e^2\cos^2\varphi(\vartheta)}; \;\; i = 1, 2, 3, 4, 5,
		\label{hdef}
	\end{equation}
	where
	\begin{equation}
		e_1(\varphi) = 1,\; e_2(\varphi) = \cos\varphi,\; e_3(\varphi) = \sin\varphi,\; e_4(\varphi) = \cos2\varphi,\; e_5(\varphi) = \sin2\varphi.
	\end{equation}
	
	According to \cite{ks}, these correspond to elliptic motions (rotations, translations and homothety). There, they are defined in terms of elliptic coordinates $\varphi$, but we need to consider them in Lazutkin $\vartheta$ (since we are considering all the other functions in them). That means we have also added a Jacobian factor in front of it. 
	
	All of these functions are also holomorphic, when $e\in G_e$, $|\imag \vartheta| < \delta$. Lets summarize our main results of this part:
	
	\begin{lemma}
		For every $e_{\max} > 0$, there exists $\delta > 0$ and previously defined strip $G_e$, so that the functions 
		
		\begin{equation}
			h_j(\vartheta), \; c_{p,q}(\vartheta), \; s_{p,q}(\vartheta)
		\end{equation}
		
		are holomorphic for $j = 1, 2, 3, 4, 5$ and $0 < \frac{p}{q}<\omega_{\max}$. Moreover, as a direct consequence functions like
		
		\begin{equation}
			\int_{0}^{2\pi}c_{p,q}(\vartheta)\cos(j\vartheta)d\vartheta, \; j \in \mathbb{Z}
		\end{equation}
		
		are holomorphic for $e \in G_e$ (we can change $c$ for $s$ or $h$ and $\cos$ for $\sin$).
		\label{mainanlycity}
	\end{lemma}

	We research the caustic rigidity of an ellipse with eccentricity $e$, not necessarily close to $0$. We will use the ideas from \cite{ks}, where the main objective is to construct a system of functions, each corresponding to the preservation of a caustic or elliptic motions. The goal is then to prove these functions span the whole deformation space, so then the caustic rigidity would follow.

	\subsection{Lengths of periodic orbits in an ellipse}
	
	Now we want to study the lengths of periodic orbits inside some ellipse. This will be used to prove the spectral rigidity at the end, not the dynamical one. Specifically, there is Definition \ref{noninc} of non-incidence condition for an ellipse. If an ellipse doesn't satisfy this condition, then our proof wouldn't work for this ellipse. 
	
	So, we just want to proof that incidence is a rather rare phenomenon. We prove that incidence cannot happen for an open interval or a dense set of $e$. In order to do that, we study types of periodic billiard orbits in the ellipse, prove their lengths to be holomorphic in $e$, using \eqref{lengthell} and \eqref{lenghyper}.
	
	The lengths of bouncing ball orbits are clearly holomorphic. For those tangent to the ellipse, \eqref{lengthell} gives an analytic function. The only problem is division by $k$, but $k$ just has a simple root at $e = 0$. 
	
	For orbits, tangent to hyperbolae we can similarly develop analytic theory and apply the same methods to these periodic points as we did to the regular orbits. One can obtain the following lemma:
	
	\begin{lemma}
		For each $e_{\max} < 1$, $\varepsilon > 0$ and $0 < \tilde{\omega} < 1/2$ there exists $\delta > 0$, so that for
		\begin{equation}
			\tilde{G}_e = \left\lbrace e \Big| \arccos \tilde{\omega} \pi + \varepsilon < |\reale e| < e_{\max}, |\imag e| < \delta \right\rbrace,
		\end{equation}
		the function $\lambda(e, \tilde{\omega})$ is holomorphic on $\tilde{G}_e$ and produces needed values for positive $e$.
	\end{lemma}

	So, their length \eqref{lenghyper} is also analytic in $\tilde{G}_e$
		
	\section{Holomorphic preservation operator study}
	
	\subsection{Estimates for small rotation numbers}
	
	Now we want to achieve some bounds for the objects we introduced in the previous section, when the rotation number $\omega$ is small. Primarily, we are interested in the case $p = 1$ and $q \ge 3$. For these numbers $\omega <\omega_{\max}$, so all the objects introduced previously are defined. We start by bounding $\lambda(e, 1/q)$.
	
	\begin{lemma}
		There exists $C(e_{\max}) > 0$, so that $\forall e \in G_e$, $|\imag \vartheta| < \delta$ and $q \ge 3$:
		\begin{equation}
			|\lambda_{1/q}| < \frac{C}{q}; \; \; |k_{1/q} - e| < \frac{C}{q^2}; \; \; |\theta_{1/q}(\vartheta) - \vartheta| < \frac{C}{q^2}.
		\end{equation}
	\end{lemma}
	\begin{proof}
		
		We begin with  the formula \eqref{omph}:
		\begin{equation}
			\frac{2}{q}\int_{0}^{\pi/2}\frac{d\tau}{\sqrt{1 - k_{1/q}^2\sin^2\tau}} = \int_{0}^{\arcsin \lambda_{1/q}/\sqrt{1-e^2}}\frac{d\tau}{\sqrt{1 - k_{1/q}^2\sin^2\tau}}
		\end{equation}

		$k_{1/q} \in G_k$ for $e\in G_e$, so integral on the left is bounded from above by some constant $C$. In the integral on the right, we can assume that we integrate along a complex interval, so we can perform a change of variables $\tau \rightarrow \gamma \tau$ to intagrate on reals:
		
		\begin{equation}
			\int_{0}^{|\arcsin \lambda_{1/q}/\sqrt{1-e^2}|}  \frac{d\tau}{\reale\sqrt{1 - k_{1/q}^2\sin^2(\gamma\tau)}} < \frac{C}{q}.
		\end{equation} 
		
		Since the function under integral is positive and bounded from zero, we get
		
		\begin{equation}
			|\arcsin \lambda_{1/q}/\sqrt{1-e^2}| < \frac{C}{q}.
		\end{equation} 
		
		Since $\arcsin$ has no other roots in our domain, the argument should be in the neighborhood of $0$. Applying Taylor approximation there and bounding the denominator away from $0$, we get
		
		\begin{equation}
			|\lambda_{1/q}| < \frac{C}{q}.
		\end{equation}
		
		The second assertion follows from the definition of $k$. The third assertion follows from the same arguments as Lemma 48 of \cite{ks}.
		
	\end{proof}
	
	Now we also introduce a lemma, that was inspired by Lemma 50 of \cite{ks} and has essentially the same proof, however one should account for complex eccentricity:
	
	\begin{lemma}
		For $q\in \mathbb{Z}_+$ and $j \ge 3$ the following are true:
		
		\begin{equation}
			\left| \left\langle \cos(q\vartheta),  c_{1,j}(\vartheta)\right\rangle_{L^2} - \pi \delta_{q,j} \right| \le C_\varepsilon j^{-1}\exp(-\delta|q-j|),
		\end{equation}
		
		where $\delta_{q,j}$ is a Dirac's delta. One can also change $\cos$ for $\sin$ and $c$ for $s$, removing $\delta_{q,j}$ in two cases.
		
		\label{scalarsmall}
	\end{lemma}
	
	The proof is given in \cite{ks}, however one should keep in mind that our functions are analytical over $\vartheta$ on a strip with width $\delta$, so $\rho = \delta$. The proof depends heavily on a result in previous lemma. Our estimates do not depend on the eccentricity since neither it did in the previous lemma.

	\subsection{Operator definition}
	
	Now we proceed to prove the main lemma. Let's say we have fixed some $q_0$ and $e_{\max}$. We know that in order to prove ellipse preservation for small eccentricities, we the condition that some of the caustics with rotation numbers smaller than $1/q_0$ are preserved. Note that out of these caustics, there were only a finite amount of ones with $p > 1$, since will be using only $p = 1$ for $q > q_1$.
	
	We also know, similarly to \cite{ks}, that if a deformation $\mu(\vartheta)$ preserves $p/q$ caustic, then
	
	\begin{equation}
		\left\langle \mu(\vartheta), c_{p,q}(\vartheta) \right\rangle_{L^2} = \left\langle \mu(\vartheta), s_{p,q}(\vartheta) \right\rangle_{L^2} = 0.  
	\end{equation}
	
	We will discuss this relation later, but now the following question arises: When do $s$ and $c$ for all of our caustics form a basis in $L^2$? If they form a basis, then the deformation cannot be orthogonal to all of them, so it would be trivial. 
	
	Of course, there always will be elliptic transformations, like translations and rotations, and these would always be valid. So, in order to adjust for this situation, we need to add $5$ functions $h_j$ into consideration. We propose a following result:
	
	\begin{lemma}
		For every $e_{\max}<1$ and $\omega_{\max} < 1/2$, there exists $\delta > 0$, so that the following holds. Let $\left\lbrace f_j(\vartheta) \right\rbrace $ be a sequence of functions depending on $e \in G_e$ as a parameter and defined as following:
		
		\begin{equation}
			f_j(\vartheta) = h_j(\vartheta), \; j=1,2,3,4,5,
		\end{equation}
		
		for every $j \ge 3, \; \exists p_j/q_j < \omega_{\max}$ so that
		
		\begin{equation}
			f_{2j} = c_{p_j, q_j}; \; \; f_{2j+1} = s_{p_j, q_j},
		\end{equation}
		
		and $p_j = 1$, $q_j = j$ for large enough $j$. Then, either $\left\lbrace f_j(\theta) \right\rbrace $ do not form a basis for only a finite amount of $e \in G_e$, or they are not a basis for all $e \in G_e$. 
		\label{allornoth}
	\end{lemma}
	
	We note that a second option could often be proven impossible for $e$ close to zero, so it can be easy to prove false. For example, when $e = 0$, the domain is a disc and all the functions $c$, $s$ and $h$ trivialize.
	
	We also note that previously we used a bit different definition of $G_e$. However, we can always re-scale it by dividing $\varepsilon$ by some constant.
	
	The main idea will be to use the result in \cite{kato} about the analycity of compact operators. First, we will construct a family of operators, depending on $e$, prove that they are analytical and compact. Then, we will study the behavior of eigenvalues of these operators for different $e$. 
	
	Denote $\left\lbrace e_j(\vartheta) \right\rbrace $ a regular orthonormal basis in $L^2[0, 2\pi]$, consisting of sines and cosines. Now, we will introduce the system of bounded linear operators $L_e$ by giving their action on the basis vectors $\left\lbrace e_j \right\rbrace $
	
	\begin{equation}
	L_e: L^2[0, 2\pi] \rightarrow L^2[0, 2\pi]: \;\;\;	L_e(x) =\sum_j (e_j - \frac{1}{\sqrt{\pi}}f_j)\left\langle x, e_j \right\rangle 
	\end{equation}
	
	\subsection{Proving operator to be compact and holomorphic}
	
	\begin{lemma}
		$L_e$ are all Hilbert - Schmidt operators, and
		\begin{equation}
			||L_e||_{HS} < C(e_{\max}), \; e \in G_e,
		\end{equation}
		
		where $C(e_{\max})$ depends only on $e_{\max}$ and the choice of $\left\lbrace f_j \right\rbrace $. Particularly, they all are uniformly bounded and compact operators.
		\label{hilbschm}
	\end{lemma}

	\begin{proof}
		First of all, we can choose $\delta$, such that all the functions $h$, $c$ and $s$ are holomorphic and bounded (not necessarily uniformly) on $e \in G_e$ and $|\imag \vartheta| < \delta$ for $p/q < \omega_{\max}$, due to Lemma \ref{mainanlycity}. Then, they are elements of $L^2$ and the operators are correctly defined on the basis functions. Now, we only need to bound the Hilbert-Schmidt norm, that being
		
		\begin{equation}
			||L_e||_{HS} = \sum_j ||L_e e_j||^2 = \sum_j||e_j - \frac{1}{\sqrt{\pi}}f_j||^2.
		\end{equation}
		
		Since all of the elements in this series are bounded independently over $e$, we will not consider a finite amount of small $j$. For large $j$, all the $f_j$ would be of the type $1/q$ for $q \ge 3$. So, the rest of our series would have the following structure:
		
		\begin{equation}
			\frac{1}{\pi}\sum_{j \ge j_0} \left( ||c_{1, j}(\vartheta) - \cos j\vartheta||^2 + ||s_{1, j}(\vartheta) - \sin j\vartheta||^2 \right) 
			\label{feclose}
		\end{equation} 
		
		Using Parseval's identity, we get:
		
		\begin{equation}
			\frac{1}{\pi}\sum_{j \ge j_0}\sum_{q = 1}^{\infty} \left(\left|\left\langle c_{1, j}(\vartheta) - \cos j\vartheta, e_q \right\rangle  \right|^2 + \left|\left\langle s_{1, j}(\vartheta) - \sin j\vartheta, e_q \right\rangle  \right|^2  \right).
		\end{equation}
		
		Since $e_q$ can either represent a cosine or a sine, we get a sum of $4$ terms in each element of the series. For simplicity lets consider only the terms, where cosines are multiplied by cosines. After removing constants, arising from norms (we should be careful with $e_1$, since it has a different norm, but we are achieving an upper bound), we obtain:
		\begin{equation}
			\sum_{j \ge j_0}\sum_{q = 0}^{\infty} \left|\left\langle c_{1, j}(\vartheta) - \cos j\vartheta, \cos q\vartheta \right\rangle  \right|^2 .
		\end{equation}
		
		Now we use Lemma \ref{scalarsmall} to receive an upper bound:
		
		\begin{equation}
			\sum_{j \ge j_0}\sum_{q = 0}^{\infty} C_\varepsilon^2j^{-2}\exp(-2\delta|q - j|) \le C\sum_{j \ge j_0}j^{-2}.
		\end{equation}
		
		The latter sum is bounded, so we have proven this lemma.
		
	\end{proof}
	
	Now lets introduce the concept of holomorphic family of operators. These operators share several properties with the holomorphic functions. There are several possible definitions of holomorphic family, but we include this one from \cite{kato} on the page $365$, dealing with bounded operators:
	
	\begin{lemma}
		A bounded system of operators $T(\varkappa):X\rightarrow Y$ is holomorphic for $\varkappa \in D$ if and only if it is bounded in some neighborhood of $\varkappa$ and $(Tu, g)$ is holomorphic in $D$ for every $u$ in a fundamental subset of $X$ and every $g$ in a fundamental subset of $Y^*$.
	\end{lemma}
	
	\begin{lemma}
		The family $\left\lbrace L_e\right\rbrace $ is holomorphic when $e \in G_e$ for $X = Y = L^2[0, 2\pi]$.
	\end{lemma}
	
	\begin{proof}
		In our case, $Y^* = L^2$, since we have a Hilbert space. We can pick any fundamental subset (with dense linear combinations), so we set both of these sets to be ${e_j}$. We have already proven the operators to be uniformly bound in Lemma \ref{hilbschm}, so we only need to check the second condition.
		
		However, we have that
		
		\begin{equation}
			\left\langle L_e e_i, e_j\right\rangle = \left\langle e_i - \frac{1}{\sqrt{\pi}}f_i, e_j\right\rangle = \delta_{i,j} - \frac{1}{\sqrt{\pi}}\int_{0}^{2\pi}f_i(\vartheta)e_j(\vartheta)d\vartheta,
		\end{equation}
		
		and this is analytical due to Lemma \ref{mainanlycity}.
		
	\end{proof}
	
	Now we have proven the system to be holomorphic, however we also know that all the operators are compact. For compact operators, \cite{kato} has the following theorem (Theorem VII $1.9$)

	\begin{oldtheorem}
		Let $T(\varkappa)$ be a family of compact operators in $X$ holomorphic for $\varkappa \in D_0$. Call $\varkappa$ a singular point, if $1$ is an eigenvalue of $T(\varkappa)$. Then, either all $\varkappa \in D_0$ are singular points or there are only a finite number of singular points in each compact subset of $D_0$ .
	\end{oldtheorem}
	
	We have already proven all the prerequisites in the previous couple lemmata, so now we can use the results. Then, we know when $1$ is an eigenvalue of $L_e$. We note that the result holds in each compact subset, but we can just increase $\varepsilon$ a little to say that there are either a finite amount of eigenvalues in the whole $G_e$, or all the points are eigenvalues. 
	
	What does it mean to have one as an eigenvalue? Since the all the operators are compact, their spectrum consists of eigenvalues, so we have that $L_e - I$ can be inverted. However, this operator maps maps $e_j$ into $f_j$ times a constant -$\frac{1}{\sqrt{\pi}}$. So, if $1$ is not an eigenvalue, then $\left\lbrace f_j\right\rbrace $ form a basis of $L^2[0, 2\pi]$, and vise versa. We have proven the lemma. We got the result for a compact subset, but we can always increase $e_{\max}$, doing the same idea as earlier. 
	
	Next, we need to show that the first option cannot happen. The first option would say that the system $\left\lbrace f_j\right\rbrace $ is not a basis for small eccentricities. Then we will show that it contradicts our result in these ellipses. We will be considering positive real eccentricities from now on.

	This subsection discussed the notion of operator $L_e$ for a general selection of $(p_i, q_i)$, but in order to move forward we need to get back to our original set of conditions, since we will be using the results from the first part of the paper. Hence, from this point forward, the family $(p_i, q_i)$ in Lemma \ref{allornoth} consists of the family defined in Lemma \ref{mainfieldlemma}, with all the $(1, q)$ pairs added for $q > q_1$. For other families one can prove the second option to hold in other ways. We also introduce a family $\mathcal F$ as all the caustics that give us the functionals $A_{p_i, q_i}$. 
	
	\subsection{Deformed Fourier nodes}
	
	To study operators for small eccentricities we need to define the deformed Fourier nodes. The reason is very simple. We have established in previous sections that in order to study the difference between $2$ caustics with the same $q$, one need to use formulas like \eqref{idef}. And since those formulas are better in elliptic coordinates, we need to use elliptic harmonics for small $j$. However, we know that for large $j$ elliptic harmonics and action-angle harmonics are not that close to each other, unlike action-angle and Lazutkin. So, we should use Lazutkin harmonics for large $j$, if we want to maintain compactness and other qualities of our operator.
	
	So, we introduce the following family of harmonics. If the frequency of the harmonic $e_i$ is in $[3, q_1]$, then 
	
	\begin{equation}
		d_i(\vartheta) =  e_i(\varphi(\vartheta)).
	\end{equation}

	When the frequency is more than $q_1$, then
	
	\begin{equation}
		d_i(\vartheta) = e_i(\vartheta).
	\end{equation}
	
	If the frequency of the harmonic is $2$ or less, then we just have an elliptic motion and
	
	\begin{equation}
		d_i(\vartheta) = \frac{e_i(\varphi(\vartheta))}{1-e^2\cos^2\varphi(\vartheta)}
	\end{equation}
	
	\begin{lemma} 
		The system $\left\lbrace d_i(\vartheta) \right\rbrace_{i\ge 0} $ forms a not necessarily orthogonal basis in $L_\vartheta^2[0, 2\pi]$ for $0 < e < e_0$ for some small $e_0$.
		\label{lema77}
	\end{lemma}

	\begin{proof}
		Define an operator $D_e$, such that
		\begin{equation}
			D_e(e_i) = d_i.
		\end{equation}
	We want to prove that $D_e$ is a bounded invertible operator in $L^2$. Let's estimate the following norm:
	
	\begin{equation}
		||I - D_e|| = \sup_{||x||=1}||(I-D_e)(x)|| \le \sup_{||x||=1}\sum_{i=0}^{2q_1+1}||e_i-d_i||x_i\le \left( \sum_{i=0}^{2q_1+1}||e_i-d_i||^2\right)^{1/2} \rightarrow 0, e \rightarrow 0. 
	\end{equation}
	So, the operator is bounded and for small enough $e$ it will be also invertible, since the norm above would be less than $1$.
	
	This would mean that $d_i$ is a basis of this Hilbert space.
	\end{proof}
	
	\subsection{Operator non-degeneracy for small eccentricities}
	
	\begin{lemma}
		The number $1$ is not an eigenvalue of the operator $L_e$, when $0 < e < e_0$ for some small $e_0$.
	\end{lemma}
	
	\begin{proof}
		Now we can choose $\omega_{max} = 1/{q_0}$. We will be considering operators $L_e^*$, since they are easier to study. We can see that they take the following form:
		
		\begin{equation}
			L_e^*(x) = I - \sum_n \frac{1}{\sqrt{\pi}}e_n\left\langle x, f_n \right\rangle 
			\label{conjl}
		\end{equation} 
		
		Note that this is defined, meaning the series converges, due to Lemma \ref{scalarsmall}.
		
		Assume that $L_e$ have $1$ as an eigenvalue. Then $L_e^*$ are also compact and have $1$ as an eigenvalue. We will prove that cannot happen.
		
		Let's consider a pair of scaling operators, designed to resemble scaling in \eqref{scaleodd} and \eqref{scaleeven}. $S^l$ will be playing the role of coefficient of $r$, while $S^r$ will be playing a role of coefficient of $k$.
		
		Specifically, we define $S^l$ and $S^r$ on $e_j$.
		
		\begin{equation}
			S^le_j=S^re_j = e_j, \; \; \; j > 2q_1+1
		\end{equation}
	
		For $6 \le j \le 2q_1+1$, we define $p, q$ and $r$ for $S^l$ and $k$ for $S^r$ the same way as in \eqref{scaleodd} and \eqref{scaleeven}. Then,
		
		\begin{equation}
			S^le_j = 2^{4r-1}\cos^{2r-2}(\pi\omega) e^{ q_1-\left[\frac{j}{2} \right] }e_j; \; \; S^re_j = \frac{2^{-3k}}{e^{q_1-\left[\frac{j}{2} \right] }}e_j.
		\end{equation}
	
		For the case $j \le 5$ we have:
		
		\begin{equation}
			S^le_j = e^{q_1-3}e_j	\; \; S^re_j = \frac{1}{e^{q_1-3}}e_j; .
		\end{equation}
	
		Then, these operators are now defined for $e = 0$, but for positive $e$ they are well defined. Moreover, $S^l-I$ and $S^r - I$ are finite-dimensional and compact. 
		
		Now, let's construct an operator
		
		\begin{equation}
			M_e = S^l\left(L_e^* - I\right)D_eS^r + I
		\end{equation}
	
		Note that $M_e$ is a compact operator, since the product consists of operators that are identity matrix plus a compact operator. Next, we will study the coefficients 
		\begin{equation}
			\left\langle \left( M_e - I  \right)   e_i, e_n \right\rangle = S^r_i\left\langle S^l\left(L_e^* - I  \right) d_i, e_n \right\rangle =  S^r_i\left\langle  d_i, \left(L_e - I  \right)S^l e_n \right\rangle = -\frac{1}{\sqrt{\pi}} S^r_i S^l_n \left\langle d_i, f_n\right\rangle . 
		\end{equation}
		Note that there are $9$ different cases. The index $n$ can be from $1$ to $5$ (elliptic perturbations case), from $6$ to $2q_1+1$ (small harmonics case) and from $2q_1+2$ onward (large harmonics case). The same can be said about index $i$. Let us study them one-by-one.
		
		\begin{enumerate}
			\item Case $A$. Both $i$ and $n$ are small harmonics. Then $f_n$ corresponds to some $p/q$ caustic, while $d_i$ corresponds to the frequency $j = \left[ \frac{i}{2}\right] $. We also presume that $f_n$ and $d_i$, and $q$ and $j$ share parity, otherwise the result would be just zero.
			\begin{equation}
				\frac{1}{\sqrt{\pi}} \left\langle d_i, f_n \right\rangle = \frac{1}{\pi}\int_{0}^{2\pi}\cos(j\varphi(\vartheta)) \frac{K(e)}{K(k_{p/q})}\frac{\sqrt{1 - e^2\sin^2\varphi(\vartheta)}}{\sqrt{1 - k_{p/q}^2\sin^2\varphi(\vartheta)}}\cos\left( q \theta_{p/q}(\vartheta)\right) d\vartheta
			\end{equation}
			This fraction with the square roots and complete elliptic integrals is just a Jacobian for the change from $\vartheta$ to $\theta_{p/q}$ (through $\varphi$). Hence, we have
			\begin{equation}
				\frac{1}{\sqrt{\pi}} \left\langle d_i, f_n \right\rangle = \frac{1}{\pi}\int_{0}^{2\pi}\cos(j\varphi(\theta_{p/q}))\cos(q\theta_{p/q}) d\theta_{p/q} = A_{p, q, j}
			\end{equation}
			due to \eqref{idef}. We have already studied these coefficients, we know their behavior when $e\rightarrow 0$ as well as some bounds on them. So,
			
			\begin{equation}
				\frac{1}{\sqrt{\pi}} S^r_i S^l_n \left\langle d_i, f_n\right\rangle =  S^r_i S^l_n A_{p, q, j} = \tilde{A}_{p,q,j}
			\end{equation}
			
			due to \eqref{scaleodd} and \eqref{scaleeven}. It is natural to denote this finite square matrix (or an operator) as $\tilde{A}$. We know that $\tilde{A}$ is invertible for small enough $e$.
			
			\item Case $B$. Here, $i$ is still small, but $m$ is large. Then, the same ideas hold, as in previous case, but now $p = 1$, $q = \left[ \frac{m}{2}\right] $, $q > q_1$.
			\begin{equation}
				\frac{1}{\sqrt{\pi}}S^r_i S^l_n \left\langle d_i, f_n \right\rangle = \tilde{A}_{1, q, j}.
			\end{equation}
			We will denote this as $\tilde{B}$, same estimates still hold.
			\item Cases $C$ and $D$. $i$ is now large, $m$ is small ($C$) or large ($D$). The difference is that now $d_i$ is a harmonic in Lazutkin coordinates, meaning we lack these estimates now. However, we can still use formulas obtain in this section, like Lemma \ref{scalarsmall}, since:
			\begin{equation}
					\frac{1}{\sqrt{\pi}} \left\langle d_i, f_n \right\rangle = \frac{1}{\sqrt{\pi}} \left\langle e_i, f_n \right\rangle
			\end{equation} 
			\item Cases where $i$ is elliptic, and $m$ is either small or large. Then we have that elliptic perturbation preserves caustics, so
			\begin{equation}
				\frac{1}{\sqrt{\pi}} \left\langle d_i, f_n \right\rangle = 0
			\end{equation}
			\item Cases where $m$ is elliptic. We will denote operators $H^1, H^2, H^3$, depending on $i$. 
		\end{enumerate}

		Then, the operator $M_e - I$ can be expressed in a following form.
		
		\begin{equation}
			M_e - I = -\begin{pmatrix}
				H^1 & H^2 & H^3 \\ 0 & \tilde{A} & C \\ 0 & \tilde{B} & D
			\end{pmatrix}
		\end{equation} 
		
		It is of course defined for real $e > 0$. However, one can also define the "limit" of these operators as $e\rightarrow 0$. This is possible since the scaling operators $S^l$ and $S^r$ were introduced. Specifically, let
		
		\begin{equation}
			M_0 - I = -\begin{pmatrix}
				I & 0 & 0 \\ 0 & \hat{A} & 0 \\ 0 & 0 & I
			\end{pmatrix},
		\end{equation}
		
		where the operator $\hat{A}$ consists of elements, defined in \eqref{matrixel} and \eqref{matrixel2} as limits of $\tilde{A}$. Note that the operator $M_0 - I$ is invertible, since the matrix $\hat{A}$ is non-degenerate due to the choice of $q_1$ and the whole discussion in previous sections with algebraic field theory. This is another reason for scaling - otherwise we would just get a matrix with only zeros in several columns.
		
		Now assume $M_e$ has an eigenvalue $1$ with eigenvector $x$. Then:
		
		\begin{align}
			M_ex = x \Rightarrow (M_e - M_0)x = (I - M_0)x \Rightarrow ||M_e - M_0|| \ge ||(I-M_0)^{-1}||^{-1} = \min\left(1, ||\hat{A}^{-1}||^{-1} \right).
		\end{align}
		
		So, the norm of $M_e - M_0$ is bounded from below by some constant, independent of $e$. Then, we also have the following.
		
		\begin{equation}
			||M_e-M_0|| \le ||H^1-I|| + ||H^2|| + ||H^3|| + ||\tilde{A} - \hat{A}|| + ||B||+||C|| + ||D-I||
		\end{equation}
		
		Lets estimate each of those and say that it approaches $0$ as $e\rightarrow 0$. Then we will prove that $M_e$ cannot have $1$ as an eigenvalue. First of all, $H^1$, $H^2$ and $\tilde{A}$ are finite-to-finite-dimensional, and it is easy to see that the elements of their matrices have respective limits. The norm of $H^3$ also goes to zero, since $h_j - e_j$ goes to $0$ as $e\rightarrow 0$.
		
		The similar thing happens to $C$ as well. Its norm goes to zero, since $f_j$ approach some $e_i$ for $j \le 2q_1+1$. This $i$ is not necessarily equal to $j$, since we use not only $1/q$ caustics here, but $i$ is equal to either $2q$ or $2q+1$. Still, $i \le 2q_1+1$, so the dot products of $f_j$ and $e_i$ with $i > 2q_1+1$, represented in $C$, will go to zero along with the norm. Moreover, $S^l$ has multiplied some rows of $C$ on the positive powers of $e$, further decreasing the norm. 

		Now we deal with $D$. We introduce the following lemma.
		
		\begin{lemma} (8.1), \cite{hks}
			The following estimate on $f_j$ holds for $j \ge 2q_1+2$:
			\begin{equation}
				||\frac{1}{\sqrt{\pi}}f_j - e_j||_2 \le \frac{C(e)}{j},
			\end{equation}
			where $C(e) \rightarrow 0$ as $e \rightarrow 0$.
		\end{lemma}
	
	Then, we have the following:
	
	\begin{align}
		||D - I||^2 = \sup_{||x||=1}||Dx-x||^2 = \sup_{||x||=1} ||\sum_{n=2q_1+2}^{\infty} \left\langle x, \frac{1}{\sqrt{\pi}}f_n - e_n\right\rangle e_n||^2 \le \\ \le \sup_{||x||=1} \sum_{n=2q_1+2}^{\infty} \left\langle x, \frac{1}{\sqrt{\pi}}f_n - e_n\right\rangle^2 \le \sum_{n=2q_1+2}^{\infty} ||\frac{1}{\sqrt{\pi}}f_n - e_n||^2 \le \frac{\pi^2}{6}C(e)^2
	\end{align}
		So, the norm of $D - I$ approaches $0$. 
		
		We are only left with $B$. One may assume this bound to be trivial or similar to one we did for $C$. We  should note, however, that we have multiplied the columns of $B$ by negative powers of the eccentricity, coming from $S^r$. Hence, we would need much more accurate estimates, otherwise the negative powers will just make the norm tend to infinity. We will use \eqref{abound} to bound the value $B_{n, i}$. We also include the coefficient, coming from $S^r$.
		
		\begin{equation}
			|B_{n,i}| \le C^{3y+i+1}e^{2y}\frac{2^{-3k}}{e^{q_1-\left[ i/2 \right] }}\le C^{3n} e^{ \left[ n/2 \right] - \left[ i/2 \right] - q_1 + \left[ i/2 \right] } \le  C^{3n} e^{\left[ n/2 \right] - q_1}
		\end{equation}
		
		Note that the power of $e$ here is positive. Then, we can bound the norm of $B$ the following way:
		
		\begin{align}
			|B| \le \sum_{i = 6}^{2q_1+1}\sum_{n = 2q_1+2}^{\infty}|B_{n,i}| \le \sum_{i = 6}^{2q_1+1}\sum_{n = 2q_1+2}^{\infty} C^{3n} e^{\left[ n/2 \right] - q_1}
		\end{align}
	
		that goes to $0$ with the eccentricity. So, we can see that the norm of $||M_e - M_0||$ goes to zero, hence $M_e$ doesn't have $1$ as an eigenvalue, so 
		
		\begin{equation}
			S^l\left(L_e^* - I\right)D_eS^r
		\end{equation} 
	
		is a bijection, so $L_e^* - I$ is a bijection, so $L_e^*$ and $L_e$ do not have $1$ as an eigenvalue for small $e>0$.
	
	\end{proof}

	The main result of this section follows directly from this.

	\begin{proposition}
		The family $\left\lbrace f_j(\vartheta) \right\rbrace $ form a basis in $L^2[0, 2\pi]$ for all but a locally finite amount of $e \in G_e$.
	\end{proposition}
	
	\begin{definition}
		We denote this locally finite set as $\mathcal Z_e$.
	\end{definition}
	
	\subsection{From Lazutkin to elliptic coordinates}
	
	We have proven that some system of functions form a basis. In the next section, we want to finish the proof, applying similar ideas to \cite{ks}. The problem is that \cite{ks} uses functions, written in elliptic coordinates $\varphi$ for that, while our functions are written in Lazutkin coordinates $\vartheta$. Hence we are changing coordinates. This seems just to be a technical move, one could probable finish the proof in Lazutkin coordinates. We introduce new functions $f^\varphi$, that have the following form:
	
	\begin{equation}
		c_{p, q}^\varphi(\varphi) = \frac{\pi}{2K(k_{p/q})}\frac{1}{\sqrt{1 - k_{p/q}^2\sin^2\varphi}}\cos\left( q \theta_{p/q}(\varphi)\right),  \; \; h_i^\varphi(\varphi) = \frac{e_i(\varphi)}{1-e^2\cos^2\varphi}.	
	\end{equation} 
	
	There are also similarly defined $s_{p,q}^\varphi$. The family $f^\varphi(\varphi)$ is constructed from these functions the same way as the regular $f$ is constructed from $c_{p,q}$, $s_{p,q}$ and $h_i$. Notice that the new functions are similar to the old ones in \eqref{cdef} and \eqref{hdef}, we are just considering them as a function of $\varphi$. Since we are using them as a functionals on the space of deformations, we have also removed the jacobian for changing coordinates from $\vartheta$ to $\varphi$.
	
	We say that $f^\varphi$ also form a basis. This is true, since the operator $Y$ that changes the parametrization of deformations from $\varphi$ to $\vartheta$ in $L_2[0, 2\pi]$ is bounded and invertible, since the Lazutkin coordinates are just elliptic with some weight, this weight being smooth, bounded and positive. Then, the operator that transforms $f$ into $f^\varphi$ is just a conjugate of this operator.
	
	\begin{align}
		Y^*f\left(\mu(\varphi) \right) =  \int_0^{2\pi}f(\vartheta) \left(Y \mu\right) (\vartheta)  d\vartheta = \int_0^{2\pi}f(\vartheta) \mu(\varphi(\vartheta))d\vartheta = \\ = \int_0^{2\pi}f(\vartheta(\varphi))\mu(\varphi)J^{-1}d\varphi = \int_0^{\pi} f^\varphi(\varphi)\mu(\varphi)d\varphi = f^\varphi(\mu(\varphi))
	\end{align}

	Then, the basis property follows:
	
	\begin{proposition}
		The family $\left\lbrace f_j^\varphi(\varphi) \right\rbrace $ form a basis in $L^2[0, 2\pi]$ when $e \notin \mathcal Z_e$.
	\end{proposition}

	\begin{remark}
		When $e \notin \mathcal Z_e$, the operator $(I -L_e^*)^{-1}Y^{*-1}$ exists and has a uniformly bounded norm in the neighborhood of $e$.
	\end{remark}
	
	This remark follows from \cite{kato}. The main point is that if a compact analytical operator doesn't have $1$ as an eigenvalue, its eigenvalues are bounded away from $1$ in some parameter neighborhood. We need this to prove the main theorem, since we change ellipses in the proof, so we claim some uniformity. 
	\section{Proof of the main dynamical result}
	
	This part will be similar to the main result section in \cite{ks}. We start with the following fact from \cite{ks}. We will assume that the semi-major axis of the original ellipse is close to $1$.
	\begin{proposition}
		Assume that $q < c(e)||\mu||_{C^1}^{-1/8}$ and that $p$ is uniformly bounded. Also assume that the deformation $\mu$ preserves $p/q$ caustic. Then, we have that
		\begin{equation}
			\int_{0}^{2\pi} \mu(\varphi)c^\varphi_{p,q}(\varphi)d\varphi = O_e(q^8||\mu||_{C^1}^2)
			\label{apqdef}
		\end{equation}
		where  $O_e(q^8||\mu||_{C^1}^2)$ is a term bounded by $q^8||\mu||_{C^1}^2$ times a factor depending on $e$, bound on $p$ and $C^5$ norm of $\mu$. The same is true for $s_{p,q}$.
		\label{smallharm}
	\end{proposition}

	The proof was given in \cite{adsk} for $p = 1$. The proof also works for other $p$, like $p\le 7$.

	We will be using this fact for relatively small $q$. For larger $q$ we will be using the following lemma, that directly follows from the Lemma \ref{scalarsmall} and the bound $||\frac{1}{\sqrt{\pi}}f_j - e_j||_2 \le \frac{C}{j}$, coming from  \eqref{feclose}. In particular, we have
	
	\begin{lemma}
		Let $\mu(\vartheta) \in C^1[0, 2\pi]$. Then, there exists $C = C(e)$, such that for $j \ge 2q_1+2$,
		\begin{equation}
			\left|\int_0^{2\pi} \mu(\varphi) f_j^\varphi(\varphi) d\varphi \right| \le \frac{C\left\| \mu\right\|_{C^1} }{j}. 
		\end{equation}
	\label{largeharm}
	\end{lemma}

	Now, we introduce the following main lemma:
	
	\begin{lemma} (Approximation Lemma)
		Let us consider an ellipse $\mathcal E_{e, c}$ with $e \notin \mathcal Z_e$. Let there be a $q_0$-rationally integrable $C^{39}$ deformation of an ellipse, identified by $C^{39}$ function $\mu(\varphi)$. For every $L > 0$ there exists a constant $C = C(e, c, L)$, such that if $||\mu||_{C^{39}} \le L$, then the following holds. There exists an ellipse $\bar{\mathcal E}$, and a function $\bar{\mu}(\varphi)$, such that the same deformation of $\bar{\mathcal E}$ is identified by $\bar{\mu}$ and 
		\begin{equation}
			||\bar{\mu}||_{C^1} \le C||\mu||_{C^1}^{703/702}
		\end{equation} 
	\end{lemma}

	\begin{proof}
		Consider the basis $f_j^\varphi$ of $L_2[0, 2\pi]$. It forms a basis, since $e \notin \mathcal Z_e$. Also denote $H$ a span of first five elements of $f^\varphi$, the elliptic deformations. We decompose.
		\begin{equation}
			\mu = \mu_H + \mu^\perp
		\end{equation}
		
		Here $\mu_H$ is an orthogonal projection of $\mu$ on $H$. Similarly to \cite{ks} we also have that
		
		\begin{equation}
			||\mu_H||_{C^{39}} \le C(e, c, k)||\mu||_{C^1}.
 		\end{equation}
		
		We claim that 
		
		\begin{equation}
			\left\| \mu^\perp \right\|_{C^1} \le C(e, c, ||\mu||_{C^{39}})||\mu||_{C^1}^{1+\delta}
		\end{equation}
		
		with $\delta = 1/702$. According to \cite{ks} this will complete the proof. We also define Fourier coefficients
		
		\begin{equation}
			\hat{\mu}_j^\perp = \int_0^{2\pi} f_j^\varphi(\varphi)\mu^\perp(\varphi)d\varphi.
		\end{equation} 
		
		Those are zero for $j$ from $1$ to $5$ due to the definition of $\mu^\perp$, so
		
		\begin{equation}
			\left\| \mu^\perp \right\|_2^2 \le C(e, c)\sum_{j=6}^{\infty}|\hat{\mu}_j^\perp|^2 
		\end{equation}
		
		Then, we follow \cite{ks} and break up all $\hat{\mu}$ into $2$ groups, one for $j \le \left\| \mu \right\|_{C^1}^{-1/9} > 2q_1 + 2$, and another for larger $j$. For the first group we use Proposition \ref{smallharm}, and for the second - Lemma \ref{largeharm}.
		
		So, we get that
		
		\begin{equation}
			||\mu^\perp||_2 \le C(e, c)||\mu||_{C^1}^{19/18}
		\end{equation}
		
		Then we use the same Sobolev identities as in \cite{ks} to finish the proof.
		
	\end{proof}
	
	The proof of the main result is now identical to \cite{ks}. The only difficulty we may face is that the minimal ellipse that is constructed has eccentricity inside $\mathcal Z_e$ and that inverse operator will fail to be uniformly bounded in the bigger neighborhood of $e$, but we can just bound the size of deformation $\varepsilon$ to assure this won't happen.

	\section{From caustics to Laplace spectrum}
	Let $\mathcal E_e$ be an ellipse with eccentricity $e$ and large semi-axis of length $1$. We pick some number $e_{\max}$ close to $1$. We will only consider ellipses with $e < e_{\max}$. Since we can choose any $e_{\max}$ as we wish, this won't be a problem.
	
	We will be considering arc-length coordinates on an ellipse, we will denote them as $(s, \varphi)$. Using them, we define a billiard map on the phase cylinder
	
	\begin{equation}
		B_0 : (s, \varphi) \rightarrow (s', \varphi')
	\end{equation}
	
	\subsection{Deformations and spectra}
	
	We consider $\Omega = \mathcal E_e + \mu(s) n(s)$. We demand that $\mu$ is a $C^\infty$ smooth deformation, its $C^{39}$ norm is bounded and its $C^{10}$ norm is small. Then we can parameterize the cyllinder phase space of $\Omega$. We can define $s, \varphi$ to be arc-length coordinates on the deformation and define a billiard map $B_\mu$ on them.

	Now we introduce some new objects, that are related to the spectrum of the domain. First of all, we will say that the periodic billiard orbit has type $(p, q)$ if it hits the boundary $q$ times (not necessarily a minimal period) and winds around it $p$ times. These orbits have rotation number $p/q$, but there could be others since $p$ and $q$ may be share a common factor. We would assume that $2p \le q$, otherwise we just get reverse orbits. The closure of the union of lengths of every periodic point of the domain is called a length spectrum of the domain.
	
	For every domain, one could define the values $t_{p, q}$ and $T_{p, q}$ -- they are the lengths of minimal and maximal orbits of type $(p, q)$ respectively. These always exist as they are the minimax and the maximum of the length functional. So, the part of the spectrum, corresponding to $(p, q)$ orbits is restricted to the interval $\left[ t_{p, q}, T_{p, q} \right]$. If a domain has a caustic with rotation number $p/q$, then all the orbits of type $(p, q)$ share the same length, meaning $t_{p, q} = T_{p, q}$. The converse is also true. So, for an ellipse, the length spectrum consists of points $t_{p, q} = T_{p, q}$ for $2p < q$, and points with $2p = q$, since for them there are no caustics, as well as multiples of perimeter. For example, there are bouncing ball orbits on both axes, as well as orbits in the "eye" that stay tangent to the hyperbolae: all of them have type $(p, 2p)$.
	
	The length spectrum is closely related to the Laplace spectrum of the domain with Dirichlet or Neumann boundary conditions. So called wave trace of the domain is introduced:
	
	\begin{equation}
		w(t) = Tr \cos(t\sqrt{\Delta}).
	\end{equation}
 	
 	The singular support $\mathrm{sing supp}$ of $w(t)$, meaning places where $w(t)$ is not $C^\infty$ smooth, satisfies the following Poisson relation:
 	
 	\begin{equation}
 		\mathrm{sing supp} \; w(t) \subset \pm L \bigcup 0,
 	\end{equation}
 	
 	where $L$ is the length spectrum. The relation holds, since the singularities and waves travel along the billiard orbits under the wave equation inside a domain. The reverse relation is generally not true, see \cite{zel}. An orbit may not be visible in this set, because it's singularity may cancel with another orbit if they have the same length or there may be no smooth generating function in the neighborhood of the orbit. 
 	
 	If we are proving Laplace rigidity, we preserve wave trace singularities. We want to say something about caustics and periodic orbits. A plan then arises naturally: we take a singularity for an ellipse, corresponding to some needed caustic $p/q$ in our dynamical result. We want to prove that the deformation also has this caustic. We know that the singularity is also there for a deformation, it is generated by some periodic orbit due to the Poisson relation. We want to say that it is generated by the orbits $(p, q)$. We also notice that for an ellipse there are no singularities nearby it, so for a deformation orbits $(p, q)$ do not generate any other singularity, unless they cancel with some other orbit. Then, all the $(p, q)$ orbits have the same length, so we have a caustic. 
 	
 	We see that these cancellations pose a problem for us. Maybe we have destroyed a $p/q$ caustic for a  deformation, so we have $t_{p, q} < T_{p, q}$. However, the singularity at $T_{p, q}$ may cancel with an orbit of other type. Then, we will be just able to see $t_{p, q}$, looking at the singularities, the same as we get looking at the ellipse. Cancellations are extremely rare, but there are a lot of ways to perturb a domain, so the main idea is to guarantee that there won't be the incidence of lengths. That will further restrict the space of ellipses. 
 	
 	\subsection{Continuity of the spectrum at ellipses}
 	
 	First, we prove that the billiard map itself is continuous over $\mu$, proving the following lemma. We mention that near the boundary there are various singularities of the billiard map, so we will restrict away from it.
 	
 		\begin{lemma}
 		For every $\delta > 0$ when $\varepsilon$ is small enough, for $\varepsilon$-small deformation of $\mathcal E_e$ the following estimate takes place:
 		\begin{equation}
 			\left\|  B_\mu(s, \varphi) - B_0(s, \varphi) \right\|_{C^{9}}^{\delta \le \varphi \le \pi - \delta} = O_{e_{\max}, \delta}(\varepsilon), \varepsilon \rightarrow 0.
 		\end{equation}
 		
 		Moreover, the billiard distance traveled before the next hit $l$ has the following bound:
 		
 		\begin{equation}
 			\left| l_\mu(s, \varphi)  - l_0(s, \varphi)\right|^{\delta \le \varphi \le \pi - \delta} = O_{e_{\max}, \delta}(\varepsilon),  \varepsilon \rightarrow 0.
 			\label{lengthdeform}
 		\end{equation} 
 		Here, $(s, \varphi)$ can be any point of the phase cylinder.
 	\end{lemma}
 	
 	\begin{proof}
 		This lemma is pretty similar to Lemma 3.12 in \cite{hezzel}. We are going to use implicit function theorem several times and construct needed billiard map. We also want to make sure that all of our bounds are uniform. 
 		
 		First, under a deformation arc-length coordinates changed, meaning the point of parameter $s$ in $\Omega$ does not necessarily lie on the normal of point $s$ in $\mathcal E_e$. Just to have a common starting point, we assume that it is true for $s = 0$. In this proof we will also assume the perimeter of $\mathcal E_e$ to be equal to $2\pi$. Then, we can find the arc length coordinate in $\Omega$ of the point lying on the normal to an ellipse at $s$ as
 		
 		\begin{equation}
 			s_{\mu}(s_0) = \int_{0}^{s_0} \left| \frac{d}{ds}\left(E_e(s) + \mu(s) n(s)\right) \right|ds
 		\end{equation}  
 		
 		We claim that $||s_{\mu}(s_0) - s_0||_{C^{10}} = O(\varepsilon)$. Next we want to say that if the perimeter of $\Omega$ is not equal to $2\pi$, we will have to normalize this formula by multiplying it by some constant of order $\varepsilon$. Estimates will still hold. Next, we want to use inverse function theorem and find $s_0(s_\mu)$. Since the first derivative of $s_{\mu}(s_0)$ is bounded away from $0$, we see that $||s_0(s_{\mu})||_{C^{10}} = O(\varepsilon)$ uniformly.
 		
 		The current goal is to obtain that the generating function of the billiard map continuously depends on the deformation. We can study the vector-function $\Omega(s) - \mathcal E_e(s)$.
 		\begin{equation}
 			\Omega(s) - \mathcal E_e(s) = \Omega(s) - \mathcal E_e(s_0(s)) + \mathcal E_e(s_0(s)) - \mathcal E_e(s) = \mu(s_0(s))n(s_0(s)) + \mathcal E_e(s_0(s)) - \mathcal E_e(s)
 		\end{equation} 
 		
 		The first term has small $C^{10}$ norm because of the deformation, and the second - because $s_0$ and $s$ are close. Hence, the generating function of the billiard map
 		
 		\begin{equation}
 			h_\mu(s, s') = |\Omega(s') - \Omega(s)| = |\Omega(s') - \mathcal E_e(s') + \mathcal E_e(s') - \mathcal E_e(s) + \mathcal E_e(s) - \Omega(s)|
 		\end{equation}
 		
 		is smooth and $C^{11}$ close to the generating map of the ellipse
 		
 		\begin{equation}
 			h_0(s, s') = |\mathcal E_e(s') - \mathcal E_e(s)|.
 		\end{equation}
 		We note that this is only true when we restrict away from the boundary. If we allow $s$ and $s'$ to be close, then the function of absolute value has a singularity at $0$, so it will not respect the derivatives. However, we are away from the boundary, so the absolute value is bounded away from its singularity, so it preserves smallness of derivatives. We want to use these generating functions, because the billiard map can be described with them. If $y = -\cos(\varphi)$, then the following holds:
 		
 		\begin{equation}
 			y = -\frac{\partial}{\partial s} h(s, s'), \; \; y' = \frac{\partial}{\partial s'} h(s, s').
 		\end{equation}
 		We now know a function $\varphi(s, s')$, but we want to find $s'(s, \varphi)$ as an implicit function of the first equation. After that, we will just have to substitute it into the second relation and find $\varphi'$ as a function of $s$ and $\varphi$. We build a function
 		
 		\begin{equation}
 			F(s', s, \varphi) = \arccos h_s(s, s') - \varphi
 		\end{equation} 
 		and apply implicit function theorem when $s \ne s'$. We note that
 		\begin{equation}
 			F_{s'}(s', s, \varphi) = - \frac{h_{ss'}(s, s')}{\sqrt{1 - h_s^2(s, s')}} = \frac{\sin \varphi'}{h(s, s')}.
 		\end{equation}
 		
 		This value is bounded away from $0$ in terms of minimal curvature, so uniformly over deformations and considered ellipses. We already know global function $s_\mu'(s, \varphi)$ exists. We bound the difference $|s_\mu'(s, \varphi) - s_0'(s, \varphi)|$, since 
 		\begin{equation}
 			|s_\mu'(s, \varphi) - s_0'(s, \varphi)| \le \frac{1}{F_{\tilde{s}'}} |\varphi_{\mu}(s, s_\mu'(s, \varphi)) - \varphi_{\mu}(s,s_0'(s, \varphi))| \le  \frac{1}{F_{\tilde{s}'}} |\varphi_{0}(s, s_0'(s, \varphi)) - \varphi_{\mu}(s,s_0'(s, \varphi))|
 		\end{equation}
 		and that $\varphi_0$ and $\varphi_\mu$ are $\varepsilon$ - close. Now we can extend this to the derivatives of $s'(s, \varphi)$. For example, we can write down two identities
 		\begin{equation}
 			F_\mu(s_\mu'(s, \varphi), s, \varphi) = 0, \; \; F_0(s_0'(s, \varphi), s, \varphi) = 0
 		\end{equation}
 		and differentiate them several times over $s$ and $\varphi$. The highest derivative term of $s'(s, \varphi)$ will appear with a coefficient $F_{s'}$. Hence, we will be able to express this term as a fraction. The numerators will consist of derivatives of $F$, evaluated at $s_\mu'$ and $s_0'$ as well as of lower derivatives of $s'$. Hence, they will be by an order of $\varepsilon$ different between those identities. Since the denominators are also $\varepsilon$-close and bounded away from $0$, we get that derivatives of $s'$ are close for $\Omega$ and $\mathcal E_e$. We can do this while $F_{\mu}$ and $F_0$ are close, so $||s_\mu' - s_0'||_{C^{9}} = O(\varepsilon)$, since we lost one derivative to the derivation in $F$.
 		
 		If we are away from the boundary, then we can substitute $s' = s'(s, \varphi)$ into a formula for $\varphi'$ and obtain that $||\varphi_\mu' - \varphi_0'||_{C^{9}} = O(\varepsilon)$, thus proving the lemma.
 	\end{proof}
 	
 	We will denote $O_{e_{\max}}(\varepsilon)$ as $O(\varepsilon)$.
 	
 	We wanted to get that the lengths of orbits of needed types ($p \le 7$, $q$ is large) do not coincide with other types. This of course means studying $t_{p, q}$ and $T_{p, q}$. We can compute them for ellipses, using elliptic integrals and so on. However, we need some way of controlling them for a deformation. Specifically, we say that they cannot change greatly under the deformation. This is vital for us - otherwise there would be no way to prevent a cancellation since the lengths may be traveling as they please.
 	
 	$T_{p, q}$ are somewhat easier in this field, since they are the maximum of the length functional, depend continuously on the deformation and can be expressed using Mather's beta-function, that is continuous under deformations. Particularly, they increase over $q$. $t_{p, q}$ are harder, since they lack this good structure. For example, in some domains one can easily destroy an orbit of minimal length, increasing $t_{p, q}$ by a big amount under a small perturbation. However, for $p/q < 1/2$ ellipses have a caustic, so $t_{p, q}$, bounded from above by $T_{p, q}$, cannot increase rapidly under deformation. For $p/q = 1/2$, orbits can disappear, but we only care that the new orbits won't be created, because we only study them to assure incidences and cancellations won't happen. 
 	
 	\begin{lemma}
 		Assume $p < p_0$ and $p/q < 1/2$. Let $L_{p, q}(\Omega)$ be any obit of type $(p, q)$ for $\Omega$, where $\mu$ is an $\varepsilon$ small deformation of a fixed ellipse $\mathcal E_e$. Then, the following holds:
 		\begin{equation}
 			|L_{p, q}(\Omega) - T_{p, q}(\mathcal E_e)| = o_{p, q, e}(1), \varepsilon \rightarrow 0
 		\end{equation}
 		\label{lemmaz4}
 	\end{lemma}

 	\begin{lemma}
 		Assume $p < p_0$ and $q = 2p$. Let $L_{p, q}(\Omega)$ be any obit of type $(p, q)$ for $\Omega$, where $\mu$ is an $\varepsilon$ small deformation of a fixed ellipse $\mathcal E_e$. Then, there exists a length of an orbit $L_{p, q}(\mathcal E_e)$ of type $(p, q)$, such that the following holds:
 		\begin{equation}
 			|L_{p, q}(\Omega) - L_{p, q}(\mathcal E_e)| = o_{p, q, e}(1), \varepsilon \rightarrow 0
 		\end{equation}
 		\label{lemmaz5}
 	\end{lemma}
 	
 	The main idea of the proof is to say that the deformed dynamics are close to the original ones, so we can get a nearly-periodic orbit in the ellipse by picking the same starting point in the phase space. Then, we will use compactness of the phase space to find a true periodic orbit in the neighborhood.
 	
	Let $\left( s_i^0, \varphi_i^0\right)_{i = 0}^q$ be a periodic orbit of type $(p, q)$ of the deformation. Right now assume each point in the orbit is $2\delta$ away from the boundary in the sense of the previous lemma. We talk why we can assume this right after the proof of Lemma \ref{lemmaz2}. We will prove that there exists a periodic orbit in $\mathcal E_e$ of the same type nearby. 
	
	Define $\left(s_i^j, \varphi_i^j \right)$ for $0 \le j \le i$ as a point we get by first iterating $\left( s_0^0, \varphi_0^0 \right) $ $i -j $ times using $B_\mu$ and then $j$ times using $B_0$. Note that we have the following:
	
	\begin{equation}
		\left| \left(s_i^1, \varphi_i^1 \right) -  \left(s_i^0, \varphi_i^0 \right) \right| = O(\varepsilon). 
	\end{equation}
	
	Then, since the billiard map inside an ellipse is smooth over $(s, \varphi)$ and $e$, we can iterate the previous bound $q-i$ times over $B_0$. Then, we get a bound on the final points:
	
	\begin{equation}
		\left| \left(s_q^j, \varphi_q^j \right) -  \left(s_q^{j-1}, \varphi_q^{j-1} \right) \right| = C^{j-1}O(\varepsilon). 
	\end{equation}
	
	Using triangle inequality, we get that
	
	\begin{equation}
		\left| \left(s_q^q, \varphi_q^q \right) -  \left(s_q^0, \varphi_q^0 \right) \right| = \left| \left(s_q^q, \varphi_q^q \right) -  \left(s_0^0, \varphi_0^0 \right) \right| = qC^{q-1}O(\varepsilon). 
		\label{distdiff}
	\end{equation}
	
	Since the length is also a smooth function in an ellipse, we get that the lengths of the periodic orbit $l_\mu$ and of the iterated $B_0$ orbit $\tilde{l}_0$ may differ slightly:
	
	\begin{equation}
		\left|l_\mu - \tilde{l}_0\right| = qC^{q-1}O(\varepsilon). 
		\label{lengthdiff}
	\end{equation}
	
	However, the new orbit in an ellipse is not necessarily periodic. We want to prove there is a periodic orbit with the of the same type nearby. For this we will use the following lemma:
	
	\begin{lemma}
		Consider a $q$-iterate of the billiard map of the ellipse $\mathcal E_e$ on the universal cover of the cylinder $\tilde{B}_0^q$. Then, for every $\tilde{\varepsilon} > 0$ there exists $\varepsilon > 0$, such that if for some $(s, \varphi)$
		
		\begin{equation}
			\left|\tilde{B}_0^q (s, \varphi) - (s + 2\pi p, \varphi) \right| < \varepsilon,
		\end{equation} 
		
		then there exists a periodic orbit of type $(p, q)$ starting in a point $\left( \hat{s}, \hat{\varphi}\right) $ for an ellipse, such that
		
		\begin{equation}
			\left|\left( \hat{s}, \hat{\varphi}\right) -  (s, \varphi) \right|  < \tilde{\varepsilon}.
		\end{equation}
		\label{lemmaz2n}
	\end{lemma}
	\begin{proof}
		Assume such $\varepsilon$ does not exist. Then, we can obtain a sequence of counter-examples with $\varepsilon$ going to zero, while $\tilde{\varepsilon}$ stays constant. However, since the phase space is compact, there would be some limit point of this sequence. Since the billiard map is continuous, the limit point would be a periodic orbit of type $(p, q)$. This leads to contradiction, since this orbit would be $\tilde{\varepsilon}$ - close to some of the elements of the sequence.
	\end{proof}
	
	Now, we can prove the following two Lemmata \ref{lemmaz4} and \ref{lemmaz5}:
	
	\begin{proof}
		We will proof these facts together. For every $\tilde{\varepsilon} > 0$ we follow these steps. First, using Lemma \ref{lemmaz2n} for $\tilde{\varepsilon}$, we obtain $\varepsilon_1$ (it is called $\varepsilon$ in the lemma). Then, for small enough $\varepsilon$ the term $qC^{q-1}O(\varepsilon)$ from \eqref{distdiff} and \eqref{lengthdiff} will get smaller than $\tilde{\varepsilon}$ and $\varepsilon_1$. Then, due to Lemma \ref{lemmaz2n} and \eqref{distdiff}, there will exist a $(p, q)$ periodic point of an ellipse $\tilde{\varepsilon}$ close to $ \left(s_0^0, \varphi_0^0 \right)$ with length $l_0$. Then, 
		
		\begin{equation}
			\left| l_\mu - l_0\right| \le \left| l_\mu - \tilde{l}_0\right| +\left| \tilde{l}_0 - l_0\right| = qC^{q-1}O(\varepsilon) + qC^{q-1}O(\tilde{\varepsilon}) \rightarrow 0 , \tilde{\varepsilon} \rightarrow 0
		\end{equation}
	\end{proof}
	
	\subsection{KAM - theory and large $p$ orbits}
	
	We have proven continuity for each type of orbits. The problem, however, is that the bounds are not uniform over the type. Since there are infinite amount of types, it is a problem. So, we will only use these lemmata for small $p$ and $q$. Other orbits can be divided into $2$ classes. The first class has unbounded $p$ and $q \ge 2p$, while the second has bounded $p$ and $q \rightarrow \infty$. The lengths of the second class tend to the multiples of the lengths of the boundary, as we will see later. First type orbits wind around the boundary many times, so one would assume their journey to be quite long. To have a rigorous proof, we have to use some invariant curves and KAM - theory. 
	
	Particularly, we will prove the following bound:
	
	\begin{lemma}
		There exists $p_0 \in \mathbb{N}$, such that for every $\mathcal E_e$ and $\Omega$: 
		\begin{equation}
			t_{p, q} > 16 \pi
		\end{equation}
		for every $p \ge p_0$  and $q \ge 2p$.
		\label{lemmaz2}
	\end{lemma}
	
	\begin{proof}
		First, we only consider one ellipse $\mathcal E_e$ and prove the existence of such $p_0$ that may depend on $e$. The idea of the proof is to establish an invariant KAM curve that won't be destroyed by the deformation. Then, we will separately study orbits above and below this curve.

		We need to use action-angle coordinates for an ellipse. We define action-angle map $\Phi$:
		
		\begin{equation}
			\Phi(s, \varphi) = (\theta, I).
		\end{equation}
		
		It is correctly defined for small enough $\varphi$, it is a symplectic map, that has the following property:
		
		\begin{equation}
			\Phi \circ B_0 \circ \Phi^{-1} (\theta, I) = \left(\theta + \alpha(I), I \right) 
		\end{equation}
		
		This map is smooth for $\varphi > 0$, although it ceases to be so at $\varphi = 0$. We will be considering a strip of a cylinder $(0, 2\pi)\times (\varphi_{min}, \varphi_{max})$. We want it to be close to the boundary, so that $\Phi$ is well defined, but not touching it, so that $\Phi$ would be smooth. We also demand $\varphi_{min}$ to be small enough, so that the image has an open strip $(0, 2\pi)\times(I_{min}, I_{max})$ contained in it. By decreasing $\varphi_{min}$, we make $I_{min}$ as small as we wish. Particularly, $I$-interval should contain a neighborhood of some Diophantine number $\omega$. 
		
		Since $\Phi$ is smooth and symplectic inside the strip, the map in a deformation can be considered the following way
		
		\begin{equation}
			\Phi \circ B_\mu \circ \Phi^{-1} (\theta, I) = \Phi \circ B_0 \circ \Phi^{-1} (\theta, I) + \left(P(\theta, I), Q(\theta, I)\right).
		\end{equation}    
		
		Here, the norms of $P$ and $Q$ are small, and the map is also symplectic. Now, we are going to use KAM theory. We know that the unperturbed map has an invariant curve $I = \omega$. Then, we can use the main result from \cite{Zehnder1989}. 
		
		It says that if the starting system had an invariant curve with rotation number $\omega$, so that
		
		\begin{equation}
			\left|\omega - m/n \right| \ge \frac{\gamma}{n^\tau}
		\end{equation}
		
		(we can assure this holds for $\tau = 2.1$ by selecting needed $\omega$) and if $P$ and $Q$ have small $C^8$-norm, then there exists some functions $p(\theta)$ and $q(\theta)$ with small enough $C^1$-norm, so that the deformed map has an invariant curve 
		
		\begin{equation}
			\theta = \theta' + p(\theta'); \; \; I = \omega + q(\theta').
		\end{equation} 
		
		$P$ and $Q$ have small $C^8$ norms, when $B_\mu -B_0$ has a small $C^8$ norm, since $\Phi$ is smooth. 
		
		From the existence of such a curve we get a very important corollary. Either a deformed orbit has all the $I$ larger than $I_{min}$ or smaller than $I_{max}$. Going back to the arc-length coordinates we get that either an orbit has all the $\varphi > \varphi_{min}$ or all the $\varphi < \varphi_{max}$. 
		
		Let's consider the first case. We note that the same thing happens on the upper half of the cylinder, meaning we can assume $\varphi$ is also bounded from above from $\pi$. Then, we can say that the length of every segment of the orbit is bounded away from zero. The fact that the length of a chord inside an ellipse, not forming small angles with the boundary can't be small is true. Then, we can just use \eqref{lengthdeform} to prove it for the deformation. So, the length of the whole orbit is at least $ql_{min}$. Then,
		
		\begin{equation}
			16 \pi \ge l_\mu \ge ql_{min} \ge pl_{min}.
		\end{equation}
		
		So, $p$ is bounded. Next, we consider the second case: $\varphi < \varphi_{max}$. Then, we can bound the ratio between the arc length difference $s' - s$ and the segment length for each reflection. We have the following trivial bound:
		
		\begin{equation}
			s' - s \le |p' - p| \left( \frac{1}{\cos \varphi} + \frac{1}{\cos \varphi'} \right)   \le \frac{2|p' - p|}{\cos \varphi_{max}}.
		\end{equation}
		
		Here, $|p'-p|$ is a length of a segment. From here we observe, that
		
		\begin{equation}
			2p\pi \le \frac{2l_\mu}{\cos\varphi_{max}} \le \frac{32\pi}{\cos \varphi_{max}}.
		\end{equation}
		
		This of course places bounds on $p$. We have proven the lemma for a given ellipse. Now we just need to say that ellipses with similar eccentricities can also be counted as the small deformations. So, we have proven the lemma for some small interval of eccentricities. Since the needed interval $(0, e_{\max})$ can be made compact, this finishes the proof. 
		
	\end{proof}

	Now we can explain the assumption in the proof of Lemma \ref{lemmaz4}. We mentioned every $(p, q)$ orbit should be bounded away from the boundary. It is true, since otherwise we can take some small KAM rotation number and say that it has a persistent KAM curve for every deformation. No $(p, q)$ orbit can go below this curve, otherwise $\varphi$ will stay small and we won't be able to rotate around the boundary $p$ times. Hence, all orbits stay outside of the KAM curve, so $\varphi$ should be bounded from below by $2\delta$. Of course this $\delta$ depends on $p$ and $q$, but this is okay for us.  
	
	\begin{lemma}
		The following estimates hold for every $\mathcal E_e$ and $\Omega$:
		\begin{equation}
			T_{p, q} < 15\pi
		\end{equation}
		for $q \in \mathbb{N}, p\le 7$.
		
		\label{lemmaz1}
	\end{lemma}

	So, since we will only use caustics with $p \le 7$ to prove the result, this means that orbits with $p\ge p_0$ won't make any difference in the proof, so we will not study them.
	
	\subsection{Expansion for bounded $p$}
	
	Now we deal with the second class of orbits. Since here $p$ is bounded, and $q$ is large, the orbits are very close to the boundary. So it makes sense to study them in Lazutkin coordinates. This allows us to get estimates for their dynamics and lengths and get expansions for them. These are studied in \cite{mm} a well as in \cite{hezzel} and \cite{dkw}, where quantitative versions were obtained.
	
	\begin{lemma}
		Uniformly for all $p < p_0$, $e<e_{\max}$ and $\mu$ with small $C^{10}$ norm we have the following:
		\begin{equation}
			L_{p, q} = p\ell(\Omega) - c_{2, p}(\mu) q^{-2} + O(q^{-4}), \; q\rightarrow \infty.
			\label{lengthseries}
		\end{equation}
		Here, $L_{p, q}$ is the length of any orbit of type $(p, q)$. Particularly we have the following:
		\begin{equation}
			T_{p, q} - t_{p, q} =  O(q^{-4}), \; q\rightarrow \infty.
		\end{equation}
		Moreover,
		\begin{equation}
			c_{2, p} = \frac{p^3}{24}\left( \int_0^\ell \kappa^{2/3}(s)ds\right)^3 
		\end{equation}
	\end{lemma}
	
	\begin{proof}
		The idea of he proof is similar to Lemma 4.3 of \cite{hezzel}. We cannot directly use it, since in the case of non-nearly circular domains the term with $q^{-3}$ has a non-small coefficient, so this may lead intervals to overlap, since the distance between them is also of order $q^{-3}$. We just need to go one step further and remove this term altogether by using higher order Lazutkin coordinates (\cite{hezzel} used an order $5$). This will lead us to increased smoothness requirements.
		
		We will use Lazutkin coordinates $(u, v)$ of order $6$ in the proof. The dynamics in these coordinates for small rotation numbers is given by the following:
		\begin{equation}
			B_\mu(u, v) = (u+v+v^6a(u, v), v+v^7b(u, v)),
		\end{equation}
		where smooth functions $a$ and $b$ are bounded by $O(\left\| \frac{1}{\kappa}\right\|_{C^{5}})$.
		
		Now assume that $(u_0, v_0)$ is a starting point of periodic orbit of type $(p, q)$ with $p \le p_0$. Then,
		
		\begin{equation}
			u_0+qv_0+qO(|v_0|^6\left\| 1/\kappa\right\|_{C^{5}}) = u_0 + p.
		\end{equation}
		
		We have a following bound on $v_0$:
		
		\begin{equation}
			v_0 = \frac{p}{q} + O(q^{-6}\left\| 1/\kappa\right\|_{C^{5}})
		\end{equation}
		
		By iterating the starting map $j \le p_0q$ times, we get that
		
		\begin{equation}
			u_j = u_0 + \frac{pj}{q} + \frac{O(\left\| 1/\kappa\right\|_{C^{5}})}{q^5}, \; \; v_j = \frac{p}{q} + \frac{O(\left\| 1/\kappa\right\|_{C^{5}})}{q^5}.	
		\end{equation}
		Now we go back to regular Lazutkin $\vartheta, \eta$ (We scale $\vartheta$ from $0$ to $1$ here). They are related to $u, v$ as
		
		\begin{equation}
			(\vartheta, \eta) = \left( u+v^2A(u,v), v+v^3B(u,v)\right) 
		\end{equation}
	
		with norms $C^k$ of $A$ and $B$ being bound by $C^{k+2}$ and $C^{k+3}$ norm of curvature respectively. Particularly,
		
		\begin{equation}
			A(u,v) = A_0(u)+A_1(u)v+A_2(u)v^2+O\left( \left\| 1/\kappa\right\|_{C^{5}}\right) |v|^3.
		\end{equation}
		So,
		
		\begin{equation}
			\vartheta_j = u_0 + \frac{pj}{q} + \frac{p^2A_0(u_0+pj/q)}{q^2} + \frac{p^3A_1(u_0+pj/q)}{q^3}+\frac{p^4A_2(u_0+pj/q)}{q^4}+ \frac{O(\left\| 1/\kappa\right\|_{C^{5}})}{q^5}
		\end{equation}
	
		After writing $u_0$ in terms of $\vartheta_0$ and $\eta_0$, we get
		
		\begin{equation}
			\vartheta_j = \vartheta_0 + \frac{pj}{q} + \frac{p^2\alpha_1(\vartheta_0+pj/q)}{q^2} + \frac{p^3\alpha_2(\vartheta_0+pj/q)}{q^3}+\frac{p^4\alpha_3(\vartheta_0+pj/q)}{q^4}+ \frac{O(\left\| 1/\kappa\right\|_{C^{5}})}{q^5},
		\end{equation}
	
	    with $\left\| \alpha_j(\vartheta)\right\|_{C^m} =  O(\left\| 1/\kappa\right\|_{C^{m+j+1}})$. 
	    
	    Then we do the rest of the proof the same way as in \cite{hezzel}. At the end we will get that
	    
	    \begin{equation}
	    	T = a_0 + \frac{a_1}{q} + \frac{a_2}{q^2}+\frac{a_3}{q^3} + \frac{O(1 + \left\| \mu \right\|_{C^{10}} )}{q^4}
	    \end{equation}
    	From the expansion in \cite{mm} and \cite{sorrent}, we get that $a_1=a_3=0$, $a_0 = p\ell$ as well as a formula on $c_{2,p} = -a_2$
	\end{proof}

	\subsection{The rest of the proof}
	
	Now we have proven all the preliminary results and we will use them to prove the theorem.
	
	Let us select $q_0$ for further use. We choose it in a way, such that for all ellipses $\mathcal E_e$ with $e < e_{\max}$, all their small deformations $\Omega$, all $p$ from $1$ to $7$ and all $q > q_0$ there would exist a smooth $q$-loop function that gives the length of an orbit that makes $p$ windings around the boundary of $\Omega$, defined for every point on the boundary of $\Omega$. The existence of such $q_0$ is true, since we can use Lazutkin coordinates near the boundary. We can essentially use ideas, similar to the previous lemma and an implicit function theorem. This parametrizations are obtained using curvature and its derivatives. Since we have bounded the eccentricity, we have bounded this value for the ellipses and small deformations cannot greatly influence this value, since it's norm in $C^{10}$ is small. 
	
	We also choose $q_0$ large enough, so that for all ellipses $\mathcal E_e$ and their small $\varepsilon$ deformations $\Omega$ for each $1 \le p \le 7$ and $q\ge q_0$ the following result holds:
	
	\begin{equation}
		t_{p, q}(\Omega) \ge \frac{2p-1}{p}\ell(\Omega).
		\label{q0select}
	\end{equation}
	
	This is possible due to \eqref{lengthseries} and that $c_{2, p}$ is bounded for deformations, since it is expressed in curvature. We add this requirement to avoid problems when studying the non-incidence condition.
	
	After we have chosen $q_0$, our caustic part gives us a family of caustics $\mathcal F$ that we need to preserve. If we preserve all the caustics from $\mathcal F$, that would mean that our deformation is an ellipse. So, $\mathcal F$ consists of a finite family of caustics coming from conditions $(p_i, q_i)$ in Lemma \ref{mainfieldlemma}, as well as $1/q$ for $q > q_1$. We break $\mathcal F$ up into two parts. The first part $\mathcal F_1$ consists of all the caustics with $p = 1$ in $\mathcal F$. It is an infinite set and in it all $q > q_0$. The second part $\mathcal F_2$ consists of all the other caustics with $2 \le p \le 7$. It is a finite set. 
	
	We will assume that $e \notin \mathcal Z_e$, otherwise we cannot obtain any result. We will also assume a non-incidence condition for $\mathcal E_e$, defined as the following
	
	\begin{definition}
		We say that $e$ satisfies non-incidence condition, if the lengths corresponding to caustics in $\mathcal F$ for the ellipse $\mathcal E_e$ are realized in the length spectrum only as the length corresponding to the respective caustic, not as any other orbit. We also demand that the lengths corresponding to the elements of $\mathcal F$ for $\mathcal E_e$ do not coincide with multiples of length of the boundary and that the length of the boundary is not realized in the length spectrum of $\mathcal E_e$ as a length of any periodic orbit.
		\label{noninc}
	\end{definition}
	
	Let us now assume for the moment that $e$ satisfies both conditions. Then, we will show that rigidity holds. 
	
	We can treat $\mathcal F$ as the set of types of orbits $(p, q)$. We can propose the latter lemma, related to the wave trace singularities:

	\begin{lemma}
		Assume that $(p, q) \in \mathcal F$. Also assume that for $\Omega\;$  $t_{p,q}$ and $T_{p, q}$ are not realized in the length spectrum through other types of orbits and that they are also not multiples of the length of the boundary. Then, the following holds:
		\begin{equation}
			t_{p, q}, T_{p, q} \in \mathrm{sing supp} \; w_\Omega(t)
		\end{equation}
		\label{lemmaz6}
	\end{lemma}
	
	This lemma follows form \cite{mm} and from the choice of $q_0$. Similar relations were studied in \cite{mm} and \cite{hezzel}. However, in their works only orbits with $p = 1$ are studied. In our case, we need to have a result for $p \le 7$ and for large $q$. The main problem with other orbits was that the generating function may not exist. We present some basic remarks about the idea of the lemma.
	
	Wave trace of the domain can locally be decomposed into a sum of distributions, each corresponding to their own type of periodic orbit, up to $C^{\infty}$ smooth error, that does not influence singularities. For example, each of these distributions has  its singularities contained in the respective part of the length spectrum. We mention that we need to be away from the length of the boundary for this to hold. Our non-incidence condition forces every singularity we need to be away from multiples of the length of the boundary. Due to the restriction on $q_0$, our $(p, q)$ orbits have generating functions. Because of this, these distributions can be expressed as an oscillatory integral with exponent of form $i\xi(t - L_{p, q}(s))$, where $L_{p, q}(s)$ is a respective generating function, evaluated at $s' = s$, once again, up to a smooth error. 
	
	This holds, because the generating function forces the singularities of the solution kernel of the wave equation to propagate along the Lagrangian submanifold of $T^*(\Omega \times \mathbb{R})$ that is defined by the generating function. So, the needed part of the wave trace, corresponding to orbits in $\mathcal F$, can be studied as oscillatory integrals.  
	
	To prove that there is a singularity, we decompose the wave trace and multiply it by a smooth cutoff function, supported in the neighborhood of $t_{p, q}$ or $T_{p, q}$ for $(p, q)$ in $\mathcal F$. Since lengths of orbits of other types are away from this point due to lemma assumptions, distributions of other types will have no singularities in the neighborhood, so will be smooth when multiplied by the cutoff. Hence, the study of whether $w_\Omega(t)$ has a singularity is equivalent of studying if the $(p, q)$ distribution has a singularity at this point. Since it can be expressed as an oscillatory integral, we can use similar techniques, mentioned in \cite{hezzel}, and used there argument of Soga to prove that it has a singularity at this point.  
	 
	Now, let's prove that if $e$ satisfies both conditions of non-incidence, there exists $\varepsilon$ small enough, so there is rigidity for small deformations. First, we will assume that 
	
	\begin{equation}
		c_{2, 1}(\Omega) = c_{2, 1}(\mathcal E_e). \label{c2inci}
	\end{equation}
	
	We introduce the following definition:
	
	\begin{definition}
		An interval $(\alpha, \beta)$ is called a $(p, q, \varepsilon)$ interval if for any $\varepsilon$ small deformation $\mu$ of $\mathcal E_e$, satisfying \eqref{c2inci}, $t_{p, q}, T_{p, q} \in (\alpha, \beta)$, while the length of orbits of different types and the multiples of the length of the boundary are not present in this interval. 
	\end{definition}
	
	Note that if we decrease $\varepsilon$, an interval continues to be a $(p, q, \varepsilon)$. 
	
	Now assume we have constructed $(p, q, \varepsilon)$ intervals for every $(p, q) \in \mathcal F$ with some uniform $\varepsilon$. Then all the $(p, q)$ satisfy Lemma \ref{lemmaz6} for $\Omega$. Also, for an ellipse $\mathrm{singsupp}_\Omega(t) \cap (a, b) = T_{p, q}$. That means that $t_{p, q}(\Omega) = T_{p, q}(\Omega) = T_{p, q}(\mathcal E_e)$. So, $\Omega$ preserves $p/q$ caustic for every $(p, q) \in \mathcal F$. Now we can use our caustic result (maybe for smaller $\varepsilon$) and prove that $\Omega$ is itself an ellipse. 
	
	Note that if $\Omega$ is an ellipse, then it is isometric with $\mathcal E_e$. It follows from the fact that $\mathcal E_e$ and $\Omega$ have the same perimeter and $c_{2,1}$. That corresponds to them having the same $\beta_1$ and $\beta_3$ in a sense of \cite{sorrent}. According to Proposition $1$ from there, the ellipses should be isometric.
	
	So, our goal is to construct $(p, q, \varepsilon)$ intervals for every element of $\mathcal F$. 
	
	We start with $\mathcal F_2$. There are only a finite number of elements in $\mathcal F_2$, so we do not care about uniformity of $\varepsilon$. 
	
	\begin{lemma}
		Assuming non-incidence, there exists a $(p, q, \varepsilon)$ interval for every $(p, q) \in \mathcal F_2$.
		\label{lemmaz7}
	\end{lemma}
	\begin{proof}
		First of all, we consider $T_{p, q}(\mathcal E_e)$. Due to the lemma \ref{lemmaz4}, for every neighborhood $(\alpha, \beta)$ of  $T_{p, q}(\mathcal E_e)$ there exists $\varepsilon$, such that $L_{p, q}(\Omega) \in (\alpha, \beta)$ for every $(p, q)$ orbit. The problem is to prove that there would be no other lengths in this interval. Then we note that for an ellipse, the only limit points of the length spectrum are the multiples of the length of the boundary. This also means that the length of the boundary is preserved. Due to the non-incidence condition,  $T_{p, q}(\mathcal E_e)$ is not a limit point of the spectrum. Due to the same condition, it does not coincide with the lengths of orbits of other types. Hence $T_{p, q}$ is isolated from the rest of the spectrum and the multiples of the lengths of the boundary by some neighborhood. We can choose $(a, b)$ in this way. Now we just need to prove that for $\Omega$ other periodic orbits cannot enter this interval. Lets discuss the types of these orbits one by one.
		
		The orbits with $\bar{p} \ge p_0$ cannot enter the interval due to Lemmata \ref{lemmaz1} and \ref{lemmaz2}. Orbits with small $\bar{p}$, but large $\bar{q}$ also cannot enter, because they are close to the multiples of the lengths of the boundary due to \eqref{lengthseries}. So, there are only a finite number of $(\bar{p}, \bar{q})$ types we have to deal with. If $\bar{p}/\bar{q} \ne 1/2$, then we use Lemma \ref{lemmaz4} and say that the length of every $(\bar{p}, \bar{q})$ orbit is close to the respective length of an orbit for an ellipse and thus is outside of the interval. Of course, we may decrease $\varepsilon$ to obtain this. For $2\bar{p} = \bar{q}$ we use Lemma \ref{lemmaz5}. The multiples of the length of the boundary also cannot enter this interval, since they are constant.
		
		So, no other length of an orbit can enter the interval, so it is $(p, q, \varepsilon)$.
	\end{proof}
	
	We are now only left with $\mathcal F_1$. To prove the similar result for them, we need the following lemma:
	
	\begin{lemma}
		The length of the boundary of an ellipse $E_e$ is not approached by the lengths of the orbits with $p>1$. 
		\label{lemmaz8}
	\end{lemma}
	
	\begin{proof}
		First, we can not consider orbits with $p \ge p_0$ due to Lemma \ref{lemmaz2}. Also, we do not consider orbits with $2 \le p < p_0$ and large $q_0$, due to \eqref{lengthseries}, since they are all close to $p\ell$. Then, we are only left with finite amount of types. Types with $p/q \ne 1/2$ only have one orbit length each due to the caustic, so there won't be any approach. We are only left with orbits with rotation number $1/2$. They break up into $2$ bouncing ball trajectories, that go along the axes of an ellipse and into orbits that stay tangent to hyperbolae. Overall, there would be only a finite amount of lengths in this class, so they won't approach $\ell$. 
	\end{proof}
	
	\begin{lemma}
		Assuming non-incidence, here exists a $(p, q, \varepsilon)$ interval for every $(p, q) \in \mathcal F_1$ with uniform $\varepsilon$.
		\label{lemmaz9}
	\end{lemma}
	
	\begin{proof}
		We know that there are infinte amount of orbit types inside $\mathcal F_1$ and that their lengths approach the length of the boundary due to \eqref{lengthseries}. First, we take an ellipse $\mathcal E_e$. We use Lemma \ref{lemmaz8} and non-incidence and obtain a neighborhood of the length of the boundary without orbits with $p>1$. Due to the ideas in Lemma \ref{lemmaz8}, using Lemma \ref{lemmaz4} and \ref{lemmaz5} and decreasing the neighborhood a little we get that it is free of orbits with $p>2$ for any $\varepsilon$ deformation of an ellipse. 
		
		Now due to \eqref{lengthseries}, for large enough $q \ge \hat{q}$ (independent of $\mu$), all the lengths of orbits of $(1, q)$ type are guaranteed to remain in this neighborhood for every small $\varepsilon$ deformation. There are some $q$ that are not guaranteed to be there, but there are only finitely many of them, so we use the same approach for them, as in Lemma \ref{lemmaz7}. 
		
		We may also assume that $\hat{q}$ is so large and $\varepsilon$ is so small, so that 
		
		\begin{equation}
			\left| q^{-3} O(\left\| \mu \right\| ) + O(q^{-4})\right| \le \frac{c_{2, 1}}{100} q^{-3} 
		\end{equation}
		
		in \eqref{lengthseries} for $p = 1$ and $q \ge \hat{q}$. Now we can show that
		
		\begin{equation}
			\left(\ell - c_{2, 1}q^{-2} -  \frac{c_{2, 1}}{10} q^{-3},  \ell - c_{2, 1}q^{-2} +  \frac{c_{2, 1}}{10} q^{-3} \right) 
			\label{lieinter}
		\end{equation}
		
		are $(1, q, \varepsilon)$ intervals (maybe for smaller, but uniform $\varepsilon$) for $q > \hat{q}$. We may assume these intervals fully lie within this neighborhood of the boundary, otherwise we increase $\hat{q}$ a little. First of all, every $(1, q)$ orbit should be inside of the interval for the deformation. Since the interval is inside the neighborhood, orbits with $p>1$ cannot enter the interval. Orbits of type $(1, q)$ with $q>\hat{q}$ also cannot enter, since the intervals do not intersect:
		
		\begin{equation}
			\ell - c_{2, 1}q^{-2} +  \frac{c_{2, 1}}{10} q^{-3} < \ell - c_{2, 1}(q+1)^{-2} -  \frac{c_{2, 1}}{10} (q+1)^{-3}.
		\end{equation}
		
		We are only left with a finite amount of orbits of type $(1, q)$ for $q \le \hat{q}$. Since for an ellipse,
		
		\begin{equation}
			t_{1, 2} < T_{1, 2} < T_{1, 3} < \ldots < T_{1, \hat{q}} <  \ell - c_{2, 1}\hat{q}^{-2} +  \frac{c_{2, 1}}{10}\hat{q}^{-3} < \ell - c_{2, 1}(\hat{q}+1)^{-2} -  \frac{c_{2, 1}}{10} (\hat{q}+1)^{-3}  < T_{1, \hat{q}+1} 
		\end{equation}
		
		these types' lengths should be changed at least by
		
		\begin{equation}
			\ell - c_{2, 1}(\hat{q}+1)^{-2} -  \frac{c_{2, 1}}{10} (\hat{q}+1)^{-3} - \ell + c_{2, 1}\hat{q}^{-2} - \frac{c_{2, 1}}{10}\hat{q}^{-3} > 0
		\end{equation}
		
		to get us into a problem. However, since there are only a finite amount of types, we can use Lemmata \ref{lemmaz4} and \ref{lemmaz5} and choose small enough $\varepsilon$ to avoid this. So, no length can enter these intervals and we have proven the lemma.
		
	\end{proof}
	
	\subsection{Proving an assumption on $c_{2, 1}$ }
	
	Now we need to prove our assumption  \eqref{c2inci}. We propose the following lemma:
	
	\begin{lemma}
		Assume $\mathcal E_e$ satisfies non-incidence condition. Then, there exists $\varepsilon$ such that every $\varepsilon$ small deformation $\Omega$, that preserves wave trace singularities, also preserves $c_{2, 1}$.  
	\end{lemma} 
	\begin{proof}
		Firstly, we say that $c_{2, 1}(\Omega)$ may be only of order $\varepsilon$ different from $c_{2, 1}(\mathcal E_e)$, since it depends on curvature. Since we have non-incidence condition, we may follow the ideas of Lemmata \ref{lemmaz8} and \ref{lemmaz9} and consider the situation only in the neighborhood of the boundary. For both domains only the lengths of $(1, q)$ orbits for large $q$ will be present there and those would be forced to lie in \eqref{lieinter}. 
		
		We propose the following map $g: \mathbb{R} \rightarrow \mathbb{R}$:
		
		\begin{equation}
			g(\ell - x) = \frac{1}{\sqrt{c_{2, 1}(\mathcal E_e)}}x^{-1/2}
		\end{equation}
		
		Now we use this function to map the neighborhood of the boundary onto the real line. We see, that wave trace of $\mathcal E_e$ has singularities at 
		
		\begin{equation}
			g\left( \ell - c_{2, 1}(\mathcal E_e) q^{-2} + O(q^{-4})\right) = q + O(1/q), q\rightarrow \infty.
		\end{equation}
		
		Meanwhile, the intervals with singularities for $\Omega$ are contained in:
		
		\begin{equation}
			\sqrt{\frac{c_{2, 1}(\Omega)}{c_{2, 1}(\mathcal E_e)}}\left(q - 1/5, q+ 1/5 \right) 
		\end{equation}
		
		Of course, if the root is not equal to $1$ (it is close to $1$), then there would be an interval that has no singularities of an ellipse lying within it. Since there would be a singularity for $\Omega$ inside an interval, the singularities of the wave trace would not match, giving us a contradiction. So, the root is $1$, so $c_{2, 1}$ coincide.
		
	\end{proof}
	\subsection{Eccentricities with non-incidence condition}
	
	Now we should ask a question: for which $e$ does the non-incidence relation hold? The relation has several requirements, one of which (that elements of $\mathcal F$ are not incident to the multiples of perimeter) holds due to \eqref{q0select}. We only have to check when the lengths of orbits in $\mathcal F$ or the perimeter of an ellipse are realized in the length spectrum using anther orbit.
	
	When we talk of a length of an orbit, we mean the function, depending on $e$, that gives the length of orbits of specific type. There are $4$ sets of types: tangent to caustics (type given by $(p, q)$), to hyperbolae ($(\tilde{p}, q)$) and minor and major axes bouncing balls (type given by $p$).
	
	Since we have earlier proven the lengths of orbits (and perimeter) to be holomorphic over $e$, we have $2$ possibilities. The first is that for some pair of lengths the incidence happens as an identity (in this pair we'll call the element of $\mathcal F$ the first, and another one - the second). Alternatively, for each pair of lengths the incidence happens only a finite amount of times for $e < e_{\max}$. Let's rule out the first option.
	
	\begin{lemma}
		This identity cannot happen for large enough $q_0$.
	\end{lemma}
	
	\begin{proof}

	Assume this identity holds. Then, the second orbit cannot be a minor axis one, since the length goes to $0$ as $e \rightarrow 1$. If the incident orbit is a major axis or tangent to caustic one, then the first and second share the same $p$, as we also can take $e \rightarrow 1$ (if the first one is a perimeter, we count $p=1$). But then we have a contradiction, since $T_{p, q}$ increases in $q$. 
	
	The last possibility is that the second one is tangent to hyperbola. Since it has a rotation number $1/2$, $q$ is bounded by $2p_0$, and its short axis libration number $\tilde{p}$ is also bounded. So, there are only a finite amount of these types. So, by making $q_0$ large enough, we can ensure that the first is just a perimeter of an ellipse, so $l_{\tilde{p}, q}^2(e) \equiv \ell(\mathcal E_e)$. By using \eqref{lenghyper} and letting $e \rightarrow \cos\frac{\pi \tilde{p}}{q}$ from the right, we get:
	
	\begin{equation}
		2q\sqrt{1 - e^2} = 4E(e)
	\end{equation}
	
	or 
	
	\begin{equation}
		q\sin\frac{\pi \tilde{p}}{q} = 2E\left( \cos\frac{\pi \tilde{p}}{q}\right) 
	\end{equation}
	
	If $\tilde{p} \ge 2$, then the left part is at least $4$, while the right one is less that $\pi$. So, $\tilde{p} = 1$. After that, the left side increases in $q$, while the right one - decreases. Since $q \ge 4$, we can check that
	
	\begin{equation}
		E\left( \sqrt{2}/2\right) < \sqrt{2}, 
	\end{equation}

	so there is no identical incidence.
	
	\end{proof}  

	Now each pair only gives us a finite amount of incidences. What pairs can even give incidences? Orbits with $p \ge p_0$ are too long for us, so we do not consider them. Pick an orbit in $\mathcal F_2$. Since there are only a finite amount of bouncing balls and tangent to hyperbolae types left, they in total give a finite amount of points. Due to \eqref{q0select}, it won't be incident to the multiples of the perimeter and with caustic orbits with other $p$ and large $q$. They won't be incident to the orbits with the same $p$, as already mentioned. So we are left with a finite amount of orbits, so a finite amount of incidences. Finally, $\mathcal F_2$ is finite, so together they also provide a finite amount.
	
	Now we study incidences of $\mathcal F_1$ and the perimeter. They are not incident to the major axis bouncing ball and caustic orbits with $p=1$, while these orbits for $p>1$ are too long (length at least $8$, while ours have $\le 2\pi)$. So we are only left with minor bouncing balls and orbits tangent to hyperbolae. 
	
	Some of those really generate an incidence with perimeter, but we want to further restrict those orbits. Specifically, we say that orbits, tangent to hyperbolae, with short axis libration number $p>1$ are too long to produce an incidence with perimeter. We claim that they also have a length of at least $8$. 
	
	First, we can prove that as $e$ increases, the lengths of these types don't increase. Since their length is the maximum of the lengths functional on certain set, and because we decreased all the chord lengths by increasing $e$ with fixed semi-major axis (essentially contracting it vertically), the maximum would not increase. Then we are only left to prove the inequality as $e\rightarrow 1$. 
	
	One can see from \eqref{hyperb} that as $e \rightarrow 1$ for fixed $\omega$, we have that $k^{-1} \rightarrow 1$. That means that the eccentricity of hyperbola goes to $1$, so the distance between its $2$ components approaches focal distance $2e$. Since orbits with $\tilde{p}>1$ go between these components at least $4$ times, their lengths should approach no less than $8$. So, they cannot be incident to the perimeter (or to anyone in $\mathcal F_1$).
	
	So, perimeter incidences can only be generated by minor bouncing balls and tangent to hyperbolae with short axis libration number $\tilde{p} = 1$. Denote this set of $e$ as $\mathcal A_e$. Since for any $e_{\max}$ there is only a finite amount of incidences, $\mathcal A_e$ is a locally finite set. 
	
	Denote $\mathcal I_e$ as a set of all $e$ with incidence. Look at all the accumulation points. Assume we have a sequence of elements of $\mathcal I_e$ approaching some value. Since the set of incidences of $\mathcal F_2$ and $\mathcal A_e$ are locally finite, we are not considering those. So, all the incident orbits in our sequence have $\tilde{p}=1$. Moreover, if elements are not approaching $1$, then $e$ in the sequence is bounded. Since each type can only be incident finitely many times if $e$ is bounded, $q \rightarrow \infty$. That means that orbit lengths approach the perimeter of an ellipse. So, at the limit point there should be incidence with perimeter, hence it is in $\mathcal A_e$. So, $\mathcal I_e$ is a small set.
	
	\begin{figure}[t]
		\includegraphics[width=18cm]{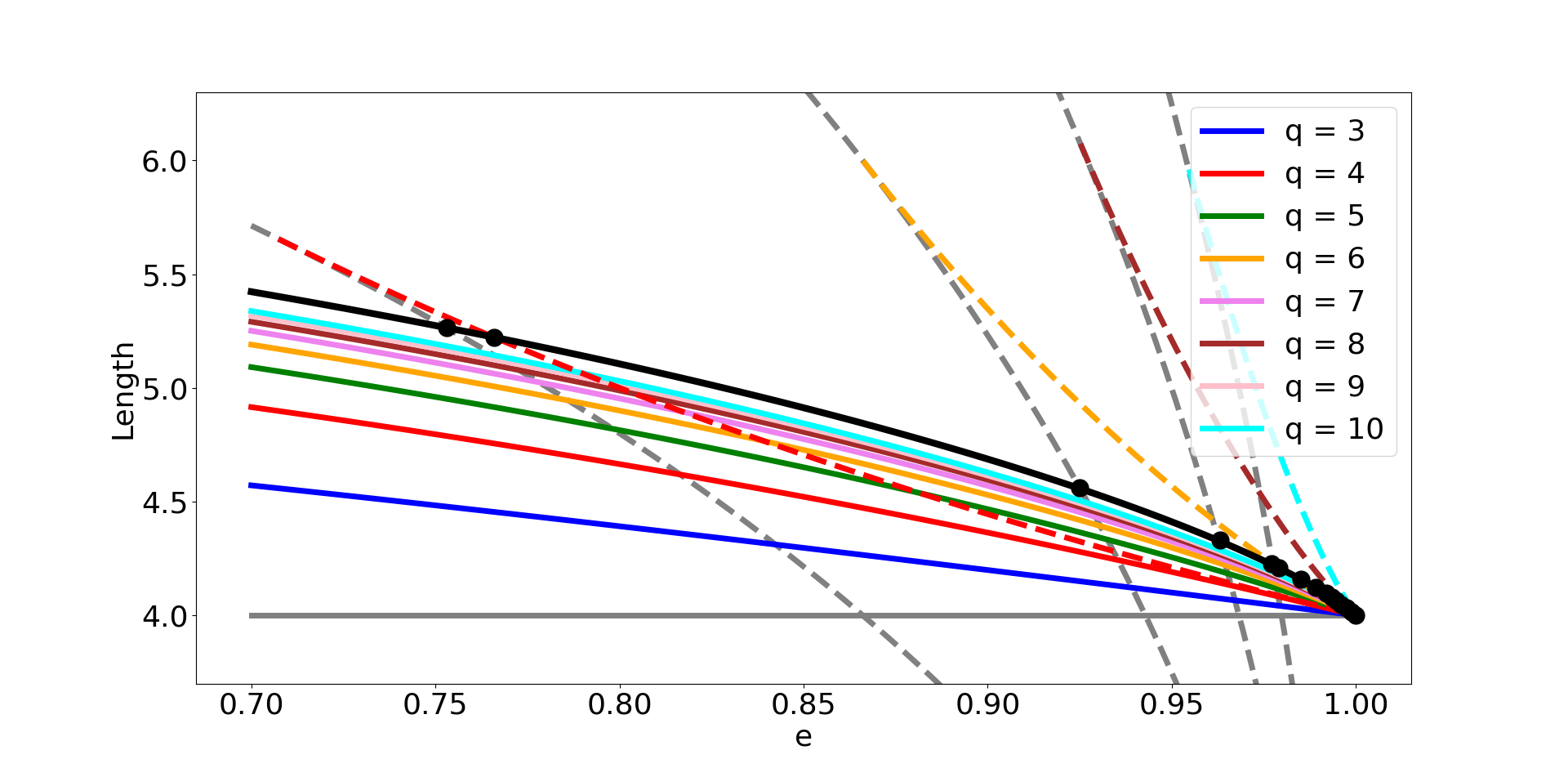}
		\centering
		\caption{Plot of lengths of periodic orbits in an ellipse for large values of $e$ near the perimeter. The perimeter is plotted with a black line, major bouncing ball is plotted with a solid gray line, while minor are plotted with a dashed one. Colored lines correspond to orbits tangent to ellipses (solid), and hyperbolae (dashed), with colors corresponding to their $q$ (they all have $p$ and $\tilde{p}$ equal to $1$, since other orbits are too long to be seen). Elements of $\mathcal A_e$ are plotted as black dots on the perimeter curve. Some elements of $\mathcal I_e$ correspond to intersections between dashed and colored curves.}
		\label{incigrpah}
	\end{figure}
	
	One can compute elements of $\mathcal A_e$ numerically to see how are they located. Then, the first few elements take the following form: $e \approx 0.753$ (incidence of perimeter to minor bouncing ball of period $2$), $e \approx 0.766$ (incidence to hyperbolic $(1, 4)$), $e \approx 0.925, 0.963, 0.978$ (bouncing ball with period $3$, $4$, $5$). The next few hyperbolic incidences: $e \approx 0.979, 0.998$ (of types $(1, 6)$ and $(1, 8)$).

	\appendix
	
	\section{Length spectrum and Birkhoff conjecture}
	
	When dealing with the Laplace spectrum and its relation with a dynamical picture, one should always be careful with possible cancellations. Hence, it can be challenging to extend some of the results to its rigidity. Length spectrum is much easier in this regard, since there are no cancellations. Hence, a lot of results on Birkhoff conjecture can be applied in studying local or global spectral rigidity of ellipses. 
	
	One example of it is our paper, another one is \cite{ks}. From the latter, it also follows that all ellipses are locally length spectrally determined. We extended this rigidity to length spectrum near the multiples of perimeter.
	
	Currently, we have only applied local Birkhoff conjecture results to this problem. There are, however, global results, like \cite{bialy} and \cite{bialymironov}. These works are hard to use in a rigidity problem for three reasons. First, they require a foliation of a phase space by invariant curves, meaning not only rational caustics. That may be challenging to get from the length spectrum, since it only deals with periodic points. Secondly, even the needed rational caustics can have arbitrary large $p$, so one has to study very large elements of the spectrum. Lastly, they still require $1/4$ caustic for example, and globally it is hard to estimate the length of $4$-periodic orbits, since $4$ is not large enough. 
	
	In this section, we will assume that a strong version of the global Birkhoff conjecture holds. Then, we will prove that all ellipses are uniquely determined by their length spectrum. We assume the following holds:
	
	\begin{conjecture}
		There exists $p_0 \ge 1$, such that for all $q_0 \ge 2p_0+1$ the following holds. Let $\Omega$ be a $C^\infty$ smooth convex domain that has a $p/q$ caustic for any $1 \le p \le p_0$ and $q \ge q_0$. Then, $\Omega$ is an ellipse. 
		\label{aconjecture}
	\end{conjecture}
	
	We will prove the following:
	
	\begin{proposition}
		Suppose Conjecture \ref{aconjecture} holds. Let $\Omega$ be a $C^\infty$ smooth strictly convex domain. Assume the length spectrum of $\Omega$ coincides with the spectrum of some ellipse $\mathcal E$. Then, $\Omega$ and $\mathcal E$ are isometric.  
	\end{proposition}
	
	\begin{proof}
		We know, that every length of the periodic point in the ellipse can be explicitly described. Hence, the only accumulation points of the length spectrum are multiples of the perimeter. The perimeter of $\Omega$ is also an accumulation point of the length spectrum, hence
		
		\begin{equation}
			\ell(\Omega) = m \cdot \ell(\mathcal E)
			\label{multperimeter}
		\end{equation}
		
		for some $m \in \mathbb{N}$. We will prove that $\Omega$ has a $1/q$ caustic for every $q$ large enough. For other $p>1$ the proof is identical. According to \cite{mm}, the elements of the length spectrum with large $q$ allow for asymptotic expansion, similar to \eqref{lengthseries}:
		
		\begin{equation}
			L_{1, q} = \ell(\Omega) - c_{2, 1}(\Omega) q^{-2} + O(q^{-4}), \; q\rightarrow \infty.
		\end{equation}
		Moreover, $t_{1, q}$ and $T_{1, q}$ are very close:
		
		\begin{equation}
			T_{1, q} - t_{1, q} =  O(q^{-k}), \; q\rightarrow \infty.
		\end{equation}
		
		for any $k$. Since $c_{2,1} > 0$, if there are arbitrary large $q$ with $t_{1, q} < T_{1, q}$, the length spectrum would have $2$ points of order $q^{-2}$-close to $m \cdot \ell(\mathcal E)$ that are $O(q^{-k})$ close to each other. These points should be somehow realized in the spectrum of an ellipse. Assume they are realized by orbits of type $(m, q_1)$ and $(m, q_2)$. Since $T_{m, q}$ is increasing in $q$, $q_1$ and $q_2$ should be large enough for orbit length to be near $m \cdot \ell(\mathcal E)$.
		
		Since ellipses allow $m/{q_1}$ and $m/{q_2}$ caustic, $q_1 \ne q_2$. However, we can apply \eqref{lengthseries} for ellipse now and obtain that even if $q_2 = q_1 + 1$, $T_{m, q_1}$ and $T_{m, q_2}$ should be no less than of order $q^{-3}$ close to each other if they are of order $q^{-2}$ close to the multiple of the perimeter. So, points in the length spectrum are too close to each other to be realized by orbits with $p = m$. So, one of those orbits has $p \ne m$. Since this happens for arbitrary large $q$, we get that $m\cdot\ell(\mathcal E)$ is accumulated by the lengths of orbits with $p \ne m$. This cannot happen for an ellipse. So, for large enough $q$ in $\Omega$ we have $t_{1, q} = T_{1, q}$.
		
		If we let $q$ be large enough, there would be a smooth generating $q$-loop function in $\Omega$. Then, since the minimal and maximal length are equal, we have that $\Omega$ allows $1/q$ caustic. Having done this for all needed $p \le p_0$, we can apply Conjecture \eqref{aconjecture} and get that $\Omega$ is also an ellipse. 
		
		If we know that $\Omega$ is an ellipse, we can prove that it is isometric to $\mathcal E$. First of all, they should have the same length, since we can apply \eqref{multperimeter} in reverse. Secondly, their $c_{2, 1}$ should coincide. To see this, we can apply \eqref{lengthseries} for both domains. Using the same arguments, as in \eqref{ctohypo}, we see that if $c_{2, 1}$ differ, lengths of some orbits with $p = 1$ and large $q$ in $\Omega$ cannot be realized as lengths of orbits with $p = 1$ in an ellipse, or vice versa. Then, they should be realized by $p>1$. So, we get that the perimeter of $\Omega$ (or $\mathcal E$) is an accumulation point of lengths of orbits with $p>1$. This cannot happen, so $c_{2,1}$ coincide. Then, once again, we get that $\mathcal E$ and $\Omega$ are isometric by Proposition $1$ of \cite{sorrent}.
		
	\end{proof}

	\medskip
	
	\printbibliography

@article{hks,
author = {Huang, Guan and Sorrentino, Alfonso},
year = {2018},
month = {04},
pages = {},
title = {Nearly Circular Domains Which Are Integrable Close to the Boundary Are Ellipses},
volume = {28},
journal = {Geometric and Functional Analysis}
}

@article{stirling, title={Calculus of Finite Differences. By C. Jordan. Second edition. Pp. xxii, 654. 1950. (Chelsea Publishing Co., New York)}, volume={35},  number={314}, journal={The Mathematical Gazette}, publisher={Cambridge University Press}, author={Jones, C. W.}, year={1951}, pages={145–146}}

@misc{ks,
      title={On the Local Birkhoff Conjecture for Convex Billiards}, 
      author={Vadim Kaloshin and Alfonso Sorrentino},
      year={2018},
      archivePrefix={arXiv},
      primaryClass={math.DS}
}

@article{adsk,
 ISSN = {0003486X},
 abstract = {The classical Birkhoff conjecture claims that the boundary of a strictly convex integrable billiard table is necessarily an ellipse (or a circle as a special case). In this article we show that a version of this conjecture is true for tables bounded by small perturbations of ellipses of small eccentricity.},
 author = {Artur Avila and Jacopo De Simoi and Vadim Kaloshin},
 journal = {Annals of Mathematics},
 number = {2},
 pages = {527--558},
 publisher = {Annals of Mathematics},
 title = {An integrable deformation of an ellipse of small eccentricity is an ellipse},
 volume = {184},
 year = {2016}
}

@article{birk,
author = {George D. Birkhoff},
title = {{On the periodic motions of dynamical systems}},
volume = {50},
journal = {Acta Mathematica},
number = {none},
publisher = {Institut Mittag-Leffler},
pages = {359 -- 379},
year = {1927}
}

@article{pori,
 ISSN = {0003486X},
 author = {Hillel Poritsky},
 journal = {Annals of Mathematics},
 number = {2},
 pages = {446--470},
 publisher = {Annals of Mathematics},
 title = {The Billard Ball Problem on a Table With a Convex Boundary--An Illustrative Dynamical Problem},
 volume = {51},
 year = {1950}
}

@article{math, title={Glancing billiards}, volume={2}, number={3-4}, journal={Ergodic Theory and Dynamical Systems}, publisher={Cambridge University Press}, author={Mather, John N.}, year={1982}, pages={397–403}}

@article{bialy,
author = {Bialy, Misha},
journal = {Mathematische Zeitschrift},
keywords = {integrable billiards; billiards; convex compact planar domains; Birkhoff's conjecture; conjugate points; circular billiards},
number = {1},
pages = {147-154},
title = {Convex billiards and a theorem by E. Hops.},
volume = {214},
year = {1993},
}

@article{gutkin,
  title={Billiard dynamics: an updated survey with the emphasis on open problems.},
  author={Eug{\`e}ne Gutkin},
  journal={Chaos},
  year={2012},
  volume={22 2},
  pages={
          026116
        }
}

@article{kac,
 ISSN = {00029890, 19300972},
 author = {Mark Kac},
 journal = {The American Mathematical Monthly},
 number = {4},
 pages = {1--23},
 publisher = {Mathematical Association of America},
 title = {Can One Hear the Shape of a Drum?},
 volume = {73},
 year = {1966}
}

@article{Zelditch2004TheIS,
  title={The inverse spectral problem},
  author={Steve Zelditch},
  journal={Surveys in differential geometry},
  year={2004},
  volume={9},
  pages={401-467}
}

@article{artin,
    author = {Heath-Brown, D. R.},
    title = "{Artin's conjecture for primitive roots}",
    journal = {The Quarterly Journal of Mathematics},
    volume = {37},
    number = {1},
    pages = {27-38},
    year = {1986},
    month = {03},
    issn = {0033-5606},
    doi = {10.1093/qmath/37.1.27},
    url = {https://doi.org/10.1093/qmath/37.1.27}
}

@article{brown,
 ISSN = {00029947},
 URL = {http://www.jstor.org/stable/2154304},
 abstract = {We establish the existence of an asymptotic expansion as t → 0+ for the trace of the heat kernel for the Neumann Laplacian in a bounded Lipschitz domain. The proof of an asymptotic expansion for the heat kernel for the Dirichlet Laplacian is also sketched. The treatment of the Dirichlet Laplacian extends work of Brossard and Carmona who obtained the same result in C1-domains.},
 author = {Russell M. Brown},
 journal = {Transactions of the American Mathematical Society},
 number = {2},
 pages = {889--900},
 publisher = {American Mathematical Society},
 title = {The Trace of the Heat Kernel in Lipschitz Domains},
 urldate = {2022-10-20},
 volume = {339},
 year = {1993}
}

@article{mm,
author = {Shahla Marvizi and Richard Melrose},
title = {{Spectral invariants of convex planar regions}},
volume = {17},
journal = {Journal of Differential Geometry},
number = {3},
publisher = {Lehigh University},
pages = {475 -- 502},
year = {1982},
doi = {10.4310/jdg/1214437138},
URL = {https://doi.org/10.4310/jdg/1214437138}
}

@misc{hezzel,
  doi = {10.48550/ARXIV.1907.03882},
  
  url = {https://arxiv.org/abs/1907.03882},
  
  author = {Hezari, Hamid and Zelditch, Steve},
  
  keywords = {Analysis of PDEs (math.AP), Spectral Theory (math.SP), FOS: Mathematics, FOS: Mathematics},
  
  title = {One can hear the shape of ellipses of small eccentricity},
  
  publisher = {arXiv},
  
  year = {2019},
  
  copyright = {arXiv.org perpetual, non-exclusive license}
}

@article{hezzel12,
title = "$C^{\infty}$ spectral rigidity of the ellipse",
abstract = "We prove that ellipses are infinitesimally spectrally rigid among $C^{\infty}$ domains with the symmetries of the ellipse.",
keywords = "Ellipses, Inverse spectral problems, Isospectral deformations, Spectral rigidity",
author = "Hamid Hezari and Steve Zelditch",
year = "2012",
doi = "10.2140/apde.2012.5.1105",
language = "English (US)",
volume = "5",
pages = "1105--1132",
journal = "Analysis and PDE",
issn = "2157-5045",
publisher = "Mathematical Sciences Publishers",
number = "5",

}

@article{vi21,
  title={Robin Spectral Rigidity of the Ellipse},
  author={Amir Vig},
  journal={The Journal of Geometric Analysis},
  year={2020},
  pages={1-58}
}

@Inbook{kato,
author="Kato, Tosio",
title="Analytic perturbation theory",
bookTitle="Perturbation Theory for Linear Operators",
year="1995",
publisher="Springer Berlin Heidelberg",
address="Berlin, Heidelberg",
pages="364--426",
abstract="The theory of analytic perturbation is historically the first subject discussed in perturbation theory. It is mainly concerned with the behavior of isolated eigenvalues and eigenvectors (or eigenprojections) of an operator depending on a parameter holomorphically.",
isbn="978-3-642-66282-9",
}

@book{abse,
author="M. Abramowitz and I. A. Segun: editors",
title="Handbook of Mathematical Functions.",
year="1965",
address="Dover, New York",
}

@article{sieber,
doi = {10.1088/0305-4470/30/13/011},
url = {https://dx.doi.org/10.1088/0305-4470/30/13/011},
year = {1997},
month = {jul},
publisher = {},
volume = {30},
number = {13},
pages = {4563},
author = {Martin Sieber},
title = {Semiclassical transition from an elliptical to an oval billiard},
journal = {Journal of Physics A: Mathematical and General}
}

@article {zel,
    AUTHOR = {Zelditch, Steve},
     TITLE = {Inverse spectral problem for analytic domains. {II}. {$Z_2$}-symmetric domains},
   JOURNAL = {Ann. of Math. (2)},
  FJOURNAL = {Annals of Mathematics. Second Series},
    VOLUME = {170},
      YEAR = {2009},
    NUMBER = {1},
     PAGES = {205--269},
      ISSN = {0003-486X},
   MRCLASS = {58J53 (35P20 35R30 37D50)},
  MRNUMBER = {2521115},
MRREVIEWER = {David Borthwick},
       DOI = {10.4007/annals.2009.170.205},
       URL = {https://doi.org/10.4007/annals.2009.170.205},
}

@article{Zehnder1989,
author = {Zehnder, Eduard, Salamon, Dietmar},
journal = {Commentarii mathematici Helvetici},
keywords = {variational problem; nonlinear partial differential equation; diffeomorphism; iteration technique; invariant torus},
number = {1},
pages = {84-132},
title = {KAM theory in configuration space.},
url = {http://eudml.org/doc/140144},
volume = {64},
year = {1989},
}

@article{dkw,
author = {Jacopo De Simoi and Vadim Kaloshin and Qiaoling Wei and Hamid Hezari},
title = {{Dynamical spectral rigidity among $Z_2$-symmetric strictly convex domains close to a circle (Appendix B coauthored with H. Hezari)}},
volume = {186},
journal = {Annals of Mathematics},
number = {1},
publisher = {Department of Mathematics of Princeton University},
pages = {277 -- 314},
keywords = {convex billiards, inverse problem, isospectrality, Laplace spectrum, Lazutkin coordinates, length spectrum, spectral rigidity},
year = {2017},
doi = {10.4007/annals.2017.186.1.7},
URL = {https://doi.org/10.4007/annals.2017.186.1.7}
}

@articleInfo{sorrent,
title = {Computing Mather's $\beta$-function for Birkhoff billiards},
journal = {Discrete and Continuous Dynamical Systems},
volume = {35},number = {10},pages = {5055-5082},
year = {2015},
issn = {1078-0947},
doi = {10.3934/dcds.2015.35.5055},
url = {/article/id/9ae4f4b9-c3cd-4fff-a7e5-0750dc17f126},
author = {Alfonso Sorrentino},
keywords = {Billiard maps, action-minimizing orbits, Mather's beta function, length spectrum, integrable billiards}
}

@unpublished{popovtopalov,
  TITLE = {{From KAM Tori to Isospectral Invariants and Spectral Rigidity of Billiard Tables}},
  AUTHOR = {Popov, G and Topalov, P},
  URL = {https://hal.archives-ouvertes.fr/hal-02012786},
  NOTE = {working paper or preprint},
  YEAR = {2019},
  MONTH = Feb
}

@article{chang,
author = {Chang,Shau‐Jin  and Friedberg,Richard },
title = {Elliptical billiards and Poncelet’s theorem},
journal = {Journal of Mathematical Physics},
volume = {29},
number = {7},
pages = {1537-1550},
year = {1988},
doi = {10.1063/1.527900},
}

@article{bialymironov,
author = {Misha Bialy and Andrey E. Mironov},
title = {{The Birkhoff-Poritsky conjecture for centrally-symmetric billiard tables}},
volume = {196},
journal = {Annals of Mathematics},
number = {1},
publisher = {Department of Mathematics of Princeton University},
pages = {389 -- 413},
keywords = {Birkhoff billiard, Birkhoff-Poritsky conjecture, integrable billiard},
year = {2022},
doi = {10.4007/annals.2022.196.1.2},
URL = {https://doi.org/10.4007/annals.2022.196.1.2}
}

@article{keagan,
  doi = {10.48550/ARXIV.2209.11721},
  
  url = {https://arxiv.org/abs/2209.11721},
  
  author = {Callis, Keagan G.},
  
  keywords = {Dynamical Systems (math.DS), FOS: Mathematics, FOS: Mathematics},
  
  title = {Absolutely Periodic Billiard Orbits of Arbitrarily High Order},
  
  publisher = {arXiv},
  
  year = {2022},
  
  copyright = {Creative Commons Attribution 4.0 International}
}

@article{treschev,
title = {Billiard map and rigid rotation},
journal = {Physica D: Nonlinear Phenomena},
volume = {255},
pages = {31-34},
year = {2013},
issn = {0167-2789},
doi = {https://doi.org/10.1016/j.physd.2013.04.003},
url = {https://www.sciencedirect.com/science/article/pii/S0167278913001103},
author = {D. Treschev}
}

@article{wangzhang,
  doi = {10.48550/ARXIV.2211.03182},
  url = {https://arxiv.org/abs/2211.03182},
  author = {Wang, Qun and Zhang, Ke},
  keywords = {Dynamical Systems (math.DS), FOS: Mathematics, FOS: Mathematics, 37J40, 70H08},
  title = {Gevrey regularity for the formally linearizable billiard of Treschev},
  publisher = {arXiv},
  year = {2022},
  copyright = {Creative Commons Attribution 4.0 International}
}

@article{iantchenko,
  title={Birkhoff normal forms in semi-classical inverse problems},
  author={Alexei Iantchenko and Johannes Sjoestrand and Maciej Zworski},
  journal={Mathematical Research Letters},
  year={2002},
  volume={9},
  pages={337-362}
}

\end{document}